\theoremstyle{plain}
\newtheorem{theorem}[equation]{Theorem}
\newtheorem{lemma}[equation]{Lemma}
\newtheorem{corollary}[equation]{Corollary}
\newtheorem{claim}[equation]{Claim}
\theoremstyle{definition}
\newtheorem{definition}[equation]{Definition}
\theoremstyle{remark}
\newtheorem{remark}[equation]{Remark}
\numberwithin{equation}{section}
\newcommand{\eps}{\varepsilon}
\newcommand{\dist}{\operatorname{dist}}
\newcommand{\re}{\mathbb{R}}
\newcommand{\rn}{\mathbb{R}^n}
\newcommand{\ree}{\mathbb{R}^{n+1}}
\newcommand{\dd}{\mathbb{D}}
\newcommand{\C}{\mathcal{C}}
\newcommand{\om}{\Omega}
\newcommand{\F}{\mathcal{F}}
\newcommand{\M}{\mathcal{M}}
\newcommand{\W}{\mathcal{W}}
\newcommand{\B}{\mathcal{B}}
\newcommand{\sbf}{{\bf S}}
\newcommand{\G}{\mathcal{G}}
\newcommand{\mut}{\mathfrak{m}}
\newcommand{\pom}{\partial\Omega}
\newcommand{\hm}{\omega}
\newcommand{\sub}{{\bf U}^*}
\renewcommand{\emptyset}{\mbox{\textup{\O}}}
\DeclareMathOperator{\diam}{diam}
\DeclareMathOperator{\interior}{int}
\begin{document}
\allowdisplaybreaks

\title[Harmonic measure and quantitative connectivity. Part I]{
	Harmonic measure and quantitative connectivity: geometric characterization of the $L^p$-solvability of the Dirichlet problem. Part I}

\author{Steve Hofmann}

\address{Steve Hofmann
\\
Department of Mathematics
\\
University of Missouri
\\
Columbia, MO 65211, USA} \email{hofmanns@missouri.edu}

\author{Jos\'e Mar{\'\i}a Martell}

\address{Jos\'e Mar{\'\i}a Martell\\
Instituto de Ciencias Matem\'aticas CSIC-UAM-UC3M-UCM\\
Consejo Superior de Investigaciones Cient{\'\i}ficas\\
C/ Nicol\'as Cabrera, 13-15\\
E-28049 Madrid, Spain} \email{chema.martell@icmat.es}

\thanks{The first author is 
supported by NSF grant DMS-1664047.
The second author acknowledges financial support from the Spanish Ministry of Economy and Competitiveness, through the ``Severo Ochoa" Programme for Centres of Excellence in R\&D (SEV-2015- 0554). He also acknowledges that the research leading to these results has received funding from the European Research Council under the European Union's Seventh Framework Programme (FP7/2007-2013)/ ERC agreement no. 615112 HAPDEGMT. 
In addition, both authors were supported by NSF Grant DMS-1440140 while in residence at the MSRI in Berkeley, California, during Spring semester 2017.}

\date{\today}
\subjclass[2000]{31B05, 35J25, 42B25, 42B37}

\keywords{
Harmonic measure, Poisson kernel,
uniform rectifiability, weak local John condition, big pieces of chord-arc domains, Carleson measures.}

\begin{abstract}
Let $\Omega\subset \ree$ be an open set, not necessarily connected,
with an $n$-dimensional uniformly rectifiable boundary.
We show that $\pom$ may be approximated in a ``Big Pieces" sense
by boundaries of chord-arc subdomains of $\Omega$, and hence that 
harmonic measure for $\Omega$ is weak-$A_\infty$ with respect to surface measure
on $\pom$, provided that $\Omega$ 
satisfies a certain weak version of a local John condition.   Under the further assumption that
$\Omega$ satisfies an interior Corkscrew condition, and combined
with our previous work, and with recent work
of Azzam, Mourgoglou and Tolsa, this yields a geometric characterization of domains whose
harmonic measure is quantitatively absolutely continuous with respect to surface measure and hence a characterization of the fact that the associated $L^p$-Dirichlet problem is solvable for 
some finite $p$. 
\end{abstract}

\maketitle

\tableofcontents

\section{Introduction}\label{s1}

A classical result of F. and M. Riesz \cite{RR} states that for a simply 
connected domain $\Omega$ 
in the complex plane,
rectifiability of $\pom$ implies that harmonic measure for $\Omega$ is absolutely continuous with respect to
arclength measure on the boundary.  A quantitative version of this theorem was later
proved by Lavrentiev \cite{Lav}.
More generally, if only a portion of the boundary is rectifiable, Bishop and Jones
\cite{BiJo} have shown that harmonic measure
is absolutely continuous with respect to arclength on that portion.  They also present a counter-example
to show that the result of \cite{RR}
may fail in the absence of some connectivity hypothesis
(e.g., simple connectedness).

In dimensions greater than 2, a fundamental result of Dahlberg \cite{Dah} establishes a quantitative version
of absolute continuity, namely that 
harmonic measure belongs to the class $A_\infty$ in an appropriate local sense  
(see Definitions \ref{defAinfty} and \ref{deflocalAinfty} below),
with respect to surface measure
on the boundary of a Lipschitz domain.

The result of Dahlberg was extended to the class of Chord-arc domains (see Definition \ref{def1.cad})
by David and Jerison \cite{DJe}, and independently by Semmes \cite{Se}.    The Chord-arc hypothesis
was weakened to that of a two-sided Corkscrew condition (Definition \ref{def1.cork}) 
by Bennewitz and Lewis
\cite{BL}, who then drew the conclusion that harmonic measure is weak-$A_\infty$ 
(in an appropriate local sense, see Definitions \ref{defAinfty} and \ref{deflocalAinfty})
with respect to 
surface measure on the boundary; the latter condition is similar to the $A_\infty$ condition, but without
the doubling property, and is the best conclusion that can be obtained under the weakened
geometric conditions considered in \cite{BL}.  We note that weak-$A_\infty$ is still a quantitative,
scale invariant version of absolute continuity.

More recently, J. Azzam \cite{Az},
has given a geometric
characterization of the $A_\infty$ property of harmonic measure with respect to 
surface measure for domains with Ahlfors-David regular (ADR) boundary  (see Definition \ref{defadr}).  
This work is related to our own, so let us describe it in a bit more detail.  Specifically, Azzam shows that
for a domain $\Omega$ with ADR boundary, harmonic measure is in $A_\infty$ with respect
to surface measure, if and only if 1) $\pom$ is uniformly rectifiable (UR)\footnote{This 
is a quantitative, 
scale-invariant version of rectifiability, see Definition \ref{defur} and the 
ensuing comments.},
and 2) $\Omega$ is semi-uniform in the sense of Aikawa and Hirata \cite{AH}.  The semi-uniform condition
is a connectivity condition which states that for some uniform constant $M$,
every pair of points $X\in \Omega$ and $y\in \pom$ may 
be connected by a rectifiable curve $\gamma=\gamma(y,X)$, with
$\gamma \setminus \{y\} \subset \Omega$,
with length $\ell(\gamma) \leq M|X-y|$, 
and which satisfies the ``cigar path" condition
\begin{equation}\label{cigar}
\min\left\{\ell\big(\gamma(y,Z)\big),\ell\big(\gamma(Z,X)\big)\right\} \,\leq \, M\dist(Z,\pom)\,,\quad \forall \, Z\in \gamma\,.
\end{equation}
Semi-uniformity is a weak version of the well known uniform condition, whose definition is similar, except 
that it applies to all pairs of points $X,Y\in \Omega$.   For example, the unit disk centered at the origin, 
with the slit 
$-1/2\leq x\leq 1/2, y=0$
removed, is semi-uniform, but not uniform.
It was shown in \cite{AH} that for a domain satisfying a John condition and the
Capacity Density Condition (in particular, for a domain with an ADR boundary), 
semi-uniformity characterizes the
doubling property of harmonic measure.    
The method of \cite{Az} is, broadly speaking, related to that of
\cite{DJe}, and of \cite{BL}.  In \cite{DJe}, the authors show that a Chord-arc domain $\Omega$ 
may be approximated in a ``Big Pieces" sense
(see \cite{DJe} or \cite{BL} for a precise statement; also cf. 
Definition \ref{def1.john} below) by Lipschitz subdomains $\Omega'\subset \Omega$; 
this fact allows one to reduce 
matters to the result of Dahlberg via
the maximum principle (a method which, to the present authors' knowledge, first appears in \cite{JK} in the 
context of $BMO_1$ domains).   
The same strategy, i.e., Big Piece approximation by  Lipschitz subdomains, is employed in \cite{BL}.
Similarly, in \cite{Az}, matters are reduced to the result of \cite{DJe}, by showing that
for a domain $\Omega$ with an ADR boundary,  $\Omega$ is 
semi-uniform with a uniformly rectifiable boundary
if and only if it has ``Very Big Pieces" of Chord-arc subdomains (see \cite{Az} for a precise statement of the latter condition).
As mentioned above, the converse direction is also treated in \cite{Az}.  In that case, given an 
interior Corkscrew condition (which holds automatically in the presence of the doubling property of harmonic measure),
and provided that $\pom$ is ADR, the $A_\infty$ (or even weak-$A_\infty$) property of harmonic measure was 
already known to imply uniform rectifiability of the boundary \cite{HM-4} (although the published version appears
in \cite{HLMN}; see also \cite{MT} for an alternative proof, and a somewhat more general result); as in \cite{AH}, 
semi-uniformity
follows from the doubling property, although in \cite{Az}, the author manages to show this while dispensing with
the John domain background assumption (given a harmlessly strengthened version of the doubling property).

In light of the example of \cite{BiJo}, it has been an important open problem to determine the minimal 
connectivity assumption, which, in conjunction with uniform rectifiability of the boundary, yields quantitative
absolute continuity of harmonic measure with respect to surface measure.
We observe that in \cite{Az}, the 
connectivity condition (semi-uniformity), is tied to the doubling property 
of harmonic measure, and not to absolute continuity.
In the present work, we impose a significantly milder connectivity hypothesis than semi-uniformity, and we 
then show that $\pom$ may be approximated in a big pieces sense by boundaries
of chord-arc subdomains, and hence that harmonic measure $\hm$ satisfies a weak-$A_\infty$ condition with respect to surface measure $\sigma$
on the boundary, provided
that $\pom$ is uniformly rectifiable.  The weak-$A_\infty$ conclusion is best possible in this generality:  indeed,
the stronger conclusion that $\hm\in A_\infty(\sigma)$, which entails doubling of $\hm$, necessarily requires
semi-uniformity, as Azzam has shown.  One may then combine our results here with our previous work \cite{HM-4}, and with recent work of Azzam, Mourgoglou and Tolsa \cite{AMT}, to obtain a geometric characterization of quantitative absolute continuity of harmonic measure (see Theorem \ref{tmain} below).

Let us now describe our connectivity hypothesis, which says, roughly speaking, that from each point
$X\in \Omega$, there is local non-tangential access to an ample portion of a surface ball
at a scale on the order of $\delta(X):=\dist(X,\pom)$.  Let us make this a bit more precise.
A ``carrot path" (aka non-tangential path) joining
a point $X\in \om$, and a point $y \in \pom$, is a
connected rectifiable path $\gamma=\gamma(y,X)$, with endpoints $y$ and $X$,
such that
for some $\lambda\in(0,1)$ and for all $Z\in \gamma$,
\begin{equation}\label{eq1.2}
\lambda\, \ell\big(\gamma(y,Z)\big)\, \leq\,  \delta(Z)\,.
\end{equation} For $X\in \Omega$, and $R\geq 2$, set
$$\Delta_X = \Delta_X^R := B\big(X,R\delta(X)\big)\cap\pom\,.$$
We assume that  every
point $X\in \Omega$ may be joined by a carrot path to each $y$ in
a  ``Big Piece"  of $\Delta_X$, i.e., to each $y$
in a Borel subset $F \subset \Delta_X$, with $\sigma(F)\geq \theta\sigma(\Delta_X)$, where 
$\sigma$ denotes surface measure on $\pom$, and where the parameters
$R\geq 2$, $\lambda\in (0,1)$, and $\theta\in (0,1]$ are uniformly controlled.
We refer to this condition as a ``weak local John
condition", although ``weak local semi-uniformity" would be equally appropriate.
See Definitions \ref{def1.carrot}, \ref{def1.johnpoint}
and \ref{def1.john} for more details.    We remark that a strong version of the local John condition
(i.e., with $\theta =1$) has appeared in \cite{HMT}, in connection with boundary Poincar\'e inequalities 
for non-smooth domains.  

We observe that the weak local John condition is strictly weaker than semi-uniformity:
for example, the unit disk centered a the origin, with either the cross 
$\{-1/2\leq x\leq 1/2, y=0\}\cup \{-1/2\leq y\leq 1/2, x=0\}$ removed, or with the slit
$0\leq x\leq 1, y=0$ removed,
satisfies the weak local John condition, although semi-uniformity fails in each case.

The main result in the present work  is the following. The terminology used here will be defined in the sequel.  
\begin{theorem}\label{t1}  
	Let $\Omega \subset \ree$ be an open set, not necessarily connected, with an 
Ahlfors-David regular (ADR) boundary.
Then the following are equivalent:
\begin{itemize}
\item[(i)]
$\pom$ is uniformly rectifiable (UR; see Definition \ref{defur} below), and 
$\Omega$ satisfies the weak local John condition.
\smallskip
\item[(ii)]  $\Omega$
	satisfies an Interior Big Pieces of Chord-Arc Domains (IBPCAD) 
	condition (see Definition
	\ref{def1.ibpcad} below).
\end{itemize}	
\end{theorem}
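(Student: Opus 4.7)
The direction (ii) $\Rightarrow$ (i) should proceed by soft arguments. IBPCAD furnishes at each scale a chord-arc subdomain $\Omega'\subset\Omega$ whose boundary captures a big piece of the corresponding surface ball on $\partial\Omega$. Uniform rectifiability of $\partial\Omega$ then follows because CADs have UR boundaries and UR is preserved under the ``big pieces of'' operation (a well-known principle going back to David--Semmes). For the weak local John property, given $X\in\Omega$, select a CAD subdomain $\Omega'$ through a point close to $X$ at scale $\delta(X)$; the quantitative John condition built into any CAD yields carrot paths from $X$ to a big piece of $\partial\Omega'$, and the IBPCAD hypothesis identifies a controllable fraction of $\partial\Omega'$ with a subset of $\Delta_X$ that therefore serves as the target set.

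The substantive direction is (i) $\Rightarrow$ (ii). Fix a dyadic surface cube $Q\in\mathbb{D}(\partial\Omega)$. The goal is to construct a chord-arc subdomain $\Omega_Q\subset\Omega$ with $\diam(\Omega_Q)\approx\ell(Q)$ satisfying $\sigma(\partial\Omega_Q\cap\partial\Omega\cap c\, B_Q)\gtrsim\sigma(Q)$. The plan combines two structural inputs: UR supplies, via a bilateral corona decomposition, a partition of $Q$ into stopping-time regimes $S\in\sbf$ on which $\partial\Omega$ is well approximated by a Lipschitz graph of small constant, with a Carleson packing bound on the discarded bad regions; independently, the weak local John condition yields, from every interior point in a Whitney region above $Q$, a carrot path to a big piece of the boundary at the corresponding scale.

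The construction of $\Omega_Q$ follows the sawtooth / Carleson-region philosophy. Over each good stopping-time region $S$, one builds a Lipschitz sub-region $\Omega_S$ using the approximating graph; these pieces are then joined by thickened tubes around carrot paths supplied by the weak local John condition wherever one transitions between graph regimes. Verification of the CAD properties of the resulting $\Omega_Q$ then splits as follows: ADR of $\partial\Omega_Q$ follows from the sawtooth geometry together with the Carleson packing of bad cubes; interior corkscrews come from the Whitney cubes inside each $\Omega_S$; exterior corkscrews are provided by the side of the approximating graph opposite $\Omega_S$, using the smallness of its Lipschitz constant; the big-piece bound $\sigma(\partial\Omega_Q\cap Q)\gtrsim\sigma(Q)$ is precisely the packing estimate on bad cubes.

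The hard step, where the main obstacle lies, is verifying the Harnack chain condition for $\Omega_Q$. The weak local John hypothesis supplies only one-sided carrot paths from interior points down to a big piece of the boundary; it does not directly yield two-sided non-tangential arcs between interior points. To connect two interior points $X,Y\in\Omega_Q$ one must argue by pigeonholing: at comparable scales, the big-piece target sets associated with suitable ancestors of $X$ and $Y$ must have nontrivial intersection (by the $\theta$-density of each against the common surrounding surface ball and the ADR property), so their two carrot paths can be concatenated near a common boundary point, and the cigar condition is then verified by path-length bookkeeping along the concatenation. Executing this quantitatively, while simultaneously respecting the sawtooth geometry coming from the bilateral corona and keeping the concatenations inside $\Omega_Q$, is the technical core of the proof, and is expected to demand a careful stopping-time organization of the Whitney cubes and a pigeonhole scheme relative to the parameters $R$, $\lambda$, $\theta$ of the weak local John condition.
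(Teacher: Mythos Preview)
Your treatment of (ii) $\Rightarrow$ (i) is fine and matches the paper.

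For (i) $\Rightarrow$ (ii), you have identified the correct structural inputs (bilateral corona decomposition, carrot paths from weak local John), but you miss the paper's organizing principle and there is a genuine gap in the Harnack chain step.

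The paper does \emph{not} build $\Omega_Q$ in one shot and then verify the CAD axioms.  Instead it runs a two-parameter induction (``extrapolation of Carleson measures'', after Carleson--Garnett and Lewis): one defines a discrete packing measure $\mathfrak{m}$ on $\mathbb{D}(\partial\Omega)$ recording the bad cubes and maximal stopping cubes of the bilateral corona, and formulates an induction hypothesis $H[a,\theta]$ asserting the existence of the desired CAD subdomains whenever $\mathfrak{m}(\mathbb{D}_Q)\le a\,\sigma(Q)$ and the carrot-accessible set in $Q$ has density $\ge\theta$.  One first proves $H[a,1]\Rightarrow H[a+b,1]$ for a fixed gain $b>0$ (using the corona-type Lemma~\ref{lemma:Corona}), reaching the global Carleson bound $M_0$; one then proves $H[M_0,\theta]\Rightarrow H[M_0,\zeta\theta]$ for a fixed $\zeta<1$, pushing $\theta$ down to the value supplied by weak local John.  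At each stage the CAD subdomain is assembled from a sawtooth $\Omega_{\sbf^*}^\pm$ (already CAD by the bilateral corona machinery of Lemma~\ref{lemma2.7}) together with smaller CAD subdomains $\Omega_{Q_j}^\pm$ furnished by the induction hypothesis on well-separated subcubes; a dedicated gluing lemma (Lemma~\ref{lemmaCAD}) checks that the union is CAD using the CAD structure of the pieces, not any direct carrot-path construction.

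Your proposed Harnack chain argument does not work as stated.  Concatenating two carrot paths $\gamma(X,z)$ and $\gamma(Y,z)$ at a common boundary target $z$ produces a path that \emph{touches} $\partial\Omega$ at $z$, so no cigar/Harnack-chain condition can hold there: both subarc lengths are positive while $\delta(z)=0$.  Truncating both paths at height $\varepsilon$ above $z$ does not rescue this, since the two truncation points need not be close to one another relative to $\varepsilon$, and joining them is precisely the problem you are trying to solve.  For this reason the paper never manufactures Harnack chains out of carrot paths; the carrot paths are used only to propagate the hypothesis to smaller subcubes (Lemmas~\ref{lemma3.15}, \ref{lemma3.37}, \ref{lemma:VQ}), while connectivity of the final domain is inherited from the already-CAD sawtooths $\Omega_{\sbf^*}^\pm$ and the inductively constructed $\Omega_{Q_j}^\pm$.
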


Only the direction (i) implies (ii) is non-trivial.
For the converse,  the fact that IBPCAD implies the  
weak local John condition is immediate from the definitions.
Moreover, the boundary of a chord-arc domain is 
UR, and an ADR set with big pieces of UR is also UR (see \cite{DS2}). 

We note that condition (ii) (hence also condition (i)) implies that
harmonic measure is locally in weak-$A_\infty$ (see Definition \ref{deflocalAinfty})
with respect to surface measure:  this fact is well known, and follows 
from the maximum principle and the result 
of \cite{DJe} and \cite{Se} for chord-arc domains, and the method of \cite{BL}.

Moreover, for an open set with ADR boundary, the 
weak-$A_\infty$ property implies (and in fact is equivalent to) 
solvability of the Dirichlet problem for some $p<\infty$;
we refer the reader to, e.g., 
\cite[Section 4]{HLe} for details.
  We therefore have the following.

\begin{corollary}\label{c1}  Let $\Omega \subset \ree$ be an open set, not necessarily connected,
with a uniformly rectifiable boundary.  Suppose in addition that
$\om$ satisfies the weak local John condition.  Then the $L^p$
Dirichlet problem for $\Omega$ is solvable in $L^p$, for some $p<\infty$, i.e., given continuous data
$g$ defined on $\pom$, for the harmonic measure solution $u$ to the Dirichlet problem with data $g$, we have
for some $p<\infty$ that
\begin{equation}\label{eq1.Dirichlet}
\|N_*u\|_{L^{p}(\pom)}\, \leq \,C\, \|g\|_{L^p(\pom)}\,,
\end{equation}
where $N_*u$ is a suitable version of the non-tangential maximal function of $u$. 
\end{corollary}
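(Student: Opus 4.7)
The plan is to chain Theorem \ref{t1} with two well-known implications that the authors themselves point out in the remarks following Theorem \ref{t1}.

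First, since uniform rectifiability is by definition a subclass of the Ahlfors--David regular sets, the hypothesis of Corollary \ref{c1} fits the framework of Theorem \ref{t1}(i). Thus the nontrivial direction of Theorem \ref{t1} produces an IBPCAD structure for $\Omega$: for each surface ball $\Delta\subset\pom$ at a controlled scale one finds a chord-arc subdomain $\Omega'\subset\Omega$ whose boundary captures a uniformly large portion of $\Delta$ (quantitatively, $\sigma(\partial\Omega'\cap\Delta)\gtrsim \sigma(\Delta)$), with uniform chord-arc constants.

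Second, I would transfer harmonic measure estimates from the approximating chord-arc subdomains back to $\Omega$ by the classical maximum principle argument of Jerison--Kenig and Bennewitz--Lewis. Concretely, fix an interior corkscrew $X_\Delta$ of $\Omega'$ (which exists because $\Omega'$ is chord-arc). The result of David--Jerison \cite{DJe} and Semmes \cite{Se} asserts that the harmonic measure $\omega_{\Omega'}^{X_\Delta}$ lies in $A_\infty(\sigma|_{\partial\Omega'})$ with uniform constants. For any Borel set $E\subset\Delta$ the maximum principle gives $\omega_\Omega^{X_\Delta}(E)\ge \omega_{\Omega'}^{X_\Delta}(E\cap\partial\Omega')$, and combined with the doubling/ADR property of $\sigma$ on $\pom$ this yields the weak reverse H\"older estimate for $\omega_\Omega^{X_\Delta}/\sigma|_\Delta$ characterizing the local weak-$A_\infty$ condition (see Definition \ref{deflocalAinfty}). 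This is exactly the strategy the authors recall immediately after Theorem \ref{t1}, and it is the step that requires the most care, since one must make sure the corkscrew point lies at the right scale and that the big piece $\partial\Omega'\cap\Delta$ carries enough of the test set $E$ to close the comparison.

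Finally, the conclusion follows from the well-known equivalence, on open sets with ADR boundary, between the local weak-$A_\infty$ property of harmonic measure with respect to surface measure and the solvability of the $L^p$ Dirichlet problem for some finite $p$, together with the associated bound
\begin{equation*}
\|N_\ast u\|_{L^p(\pom)}\le C\,\|g\|_{L^p(\pom)},
\end{equation*}
as recorded, for instance, in \cite[Section 4]{HLe}. Stringing these three standard ingredients together yields Corollary \ref{c1}. The only substantive work is in Theorem \ref{t1} itself; once IBPCAD is in hand, the passage to \eqref{eq1.Dirichlet} is entirely classical.
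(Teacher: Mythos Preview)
Your proposal is correct and follows essentially the same route as the paper: apply Theorem \ref{t1} to obtain IBPCAD, then invoke the maximum-principle transfer (David--Jerison/Semmes plus the Bennewitz--Lewis criterion) to get local weak-$A_\infty$, and finally appeal to \cite[Section 4]{HLe} for the equivalence with $L^p$ Dirichlet solvability. One minor quibble: the IBPCAD structure in Definition \ref{def1.ibpcad} is indexed by points $X\in\Omega$ rather than by surface balls (the surface-ball version in Remark \ref{remarkBPCAD} requires an interior Corkscrew hypothesis that Corollary \ref{c1} does not assume), but this does not affect the argument.
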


Combining the previous results 
with certain other recent works (to be discussed momentarily), 
one obtains the following geometric characterization of
quantitative absolute continuity of harmonic measure, and of the $L^p$ solvability
of the Dirichlet problem. 
\begin{theorem}\label{tmain}
Let $\Omega\subset \ree$ be an open set satisfying an interior Corkscrew condition (see Definition \ref{def1.cork} below), and suppose that $\pom$ is Ahlfors-David
regular (ADR).   Then the following are equivalent:
\begin{enumerate}
\item $\pom$ is Uniformly Rectifiable (UR) and $\Omega$ satisfies a weak local John condition.
\item $\Omega$ satisfies an Interior Big Pieces of 
Chord-Arc Domains (IBPCAD) condition.
\item Harmonic measure $\hm$ is locally in weak-$A_\infty$ (see Definition \ref{deflocalAinfty} below)
with respect to surface measure $\sigma$ on $\pom$.

\item The $L^p$ Dirichlet problem is solvable in the sense of
Corollary  \ref{c1}, for some $p<\infty$. 
\end{enumerate}
\end{theorem}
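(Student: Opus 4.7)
The plan is to close a four-way cycle of implications, with only one of the four equivalences being genuinely new (and already established by Theorem \ref{t1}); the rest is careful assembly of known results. Specifically, $(1)\Longleftrightarrow(2)$ is precisely the content of Theorem \ref{t1} and requires no further argument, since the interior Corkscrew hypothesis of Theorem \ref{tmain} is not needed for that equivalence. For $(2)\Longrightarrow(3)$, I would follow the by-now standard big-pieces transference scheme: for each surface ball $\Delta\subset\pom$ at an admissible scale, IBPCAD produces a chord-arc subdomain $\Omega'\subset\Omega$ whose boundary contains a big piece of $\Delta$, and the results of David--Jerison \cite{DJe} and Semmes \cite{Se} give that harmonic measure $\omega^{\Omega'}$ belongs to $A_\infty(\sigma|_{\partial\Omega'})$ with uniform constants. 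The maximum principle comparison $\omega^{\Omega}\geq \omega^{\Omega'}$ on the overlap, combined with the extrapolation/self-improvement argument of Bennewitz--Lewis \cite{BL}, then upgrades this to the localized weak-$A_\infty$ conclusion for $\omega^{\Omega}$ itself.

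Next, the equivalence $(3)\Longleftrightarrow(4)$ is a general fact about open sets with ADR boundary and requires no connectivity assumption: the forward direction is a good-$\lambda$/non-tangential maximal function argument applied to the harmonic measure solution, while the backward direction is a reverse H\"older inequality for the Poisson kernel extracted from the a priori estimate \eqref{eq1.Dirichlet}. Both directions are carried out in \cite[Section~4]{HLe} and I would simply invoke that reference.

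The substantive remaining step is $(3)\Longrightarrow(1)$, which I would split into two independent pieces. For the uniform rectifiability of $\pom$, under the standing interior Corkscrew plus ADR hypotheses, it is known that weak-$A_\infty$ of harmonic measure already forces UR; this is the content of \cite{HM-4} (published as \cite{HLMN}), with an alternative proof and mild generalization due to Mourgoglou--Tolsa \cite{MT}, and I would cite these directly. For the weak local John condition, this is exactly where the recent work of Azzam, Mourgoglou and Tolsa \cite{AMT} enters: in their framework, under interior Corkscrew and ADR, the weak-$A_\infty$ property of $\omega$ implies that from every interior point $X$ one can reach, via carrot paths, a big piece of the surface ball $\Delta_X^R$ with uniform parameters, which is precisely the weak local John condition as formulated in the paper.

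The main obstacle is not in Theorem \ref{tmain} itself but in the inputs: the genuinely hard direction is $(\mathrm{i})\Longrightarrow(\mathrm{ii})$ of Theorem \ref{t1} (the construction of chord-arc subdomains with big pieces of $\pom$ starting from only the weak local John hypothesis), and the $(3)\Longrightarrow$ weak local John direction of \cite{AMT}. Once both are in hand, the proof of Theorem \ref{tmain} reduces to assembling the cycle $(1)\Longleftrightarrow(2)\Longrightarrow(3)\Longleftrightarrow(4)$ together with $(3)\Longrightarrow(1)$, and checking that the quantitative constants at each step depend only on the ADR, Corkscrew, UR, and weak local John parameters, so that the equivalence is genuinely scale-invariant.
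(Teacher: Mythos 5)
Your proposal matches the paper's own argument essentially point for point: $(1)\Leftrightarrow(2)$ is Theorem \ref{t1}, $(2)\Rightarrow(3)$ via the maximum principle together with \cite{DJe}, \cite{Se}, and the weak-$A_\infty$ criterion of \cite{BL}, $(3)\Leftrightarrow(4)$ from \cite{HLe}, and $(3)\Rightarrow(1)$ by combining \cite{HM-4}/\cite{HLMN}/\cite{MT} (for UR) with \cite{AMT} (for weak local John). This is exactly the assembly described in the remarks following the statement of Theorem \ref{tmain}.
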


Some comments are in order.   In the present paper we shall prove that (1) implies (2) (this is the content of Theorem \ref{t1}), and we note that the interior Corkscrew
condition is not needed for this particular implication (nor for (2) implies (3) 
if and only if (4)).   
Rather, it is crucial for the converse direction (3) implies (1). As noted above, the fact that
(2) implies (3) follows by a well-known argument using the maximum principle 
and the result of
\cite{DJe} and \cite{Se} for chord-arc domains, along with the criterion for
weak-$A_\infty$ obtained in \cite{BL}; the equivalence of (3) and (4) is well known. 
The implication (3) implies (1) has two parts: that weak-$A_\infty$ implies
UR is the main result of \cite{HM-4}; an alternative proof, with a more general result, appears in \cite{MT}, and see also
\cite{HLMN} for the final published version of the results of \cite{HM-4}, along with an extension to the $p$-harmonic setting.
The remaining implication, that weak-$A_\infty$ implies weak local John, is a very recent result of Azzam, Mourgoglou and
Tolsa \cite{AMT}.  We remark that in a preliminary version of this work  \cite{HM4}, we have given a direct proof that
(1) implies (3), using an approach similar to that of the present paper.  We also mention that our background hypotheses
(upper and lower ADR, and interior Corkscrew) are in the nature of best possible:  
one may construct a counter-example in
the absence of any one of them, for at least one direction of this chain of implications,
as we shall discuss in Appendix \ref{appa}.


\subsection{Further notation and definitions}

\begin{list}{$\bullet$}{\leftmargin=0.4cm  \itemsep=0.2cm}

\item Unless otherwise stated, we use the letters $c,C$ to denote harmless positive constants, not necessarily
the same at each occurrence, which depend only on dimension and the
constants appearing in the hypotheses of the theorems (which we refer to as the
``allowable parameters'').  We shall also
sometimes write $a\lesssim b$ and $a \approx b$ to mean, respectively,
that $a \leq C b$ and $0< c \leq a/b\leq C$, where the constants $c$ and $C$ are as above, unless
explicitly noted to the contrary.  At times, we shall designate by $M$ a particular constant whose value will remain unchanged throughout the proof of a given lemma or proposition, but
which may have a different value during the proof of a different lemma or proposition.

\item $\Omega$ will always denote an open set in $\ree$,
not necessarily connected unless otherwise specified.

\item  We use the notation
$\gamma(X,Y)$ to denote a rectifiable path with endpoints $X$ and $Y$,
and its arc-length will be denoted $\ell(\gamma(X,Y))$.  Given such a path,
if $Z\in \gamma(X,Y)$, we use the notation $\gamma(Z,Y)$ to denote the portion of the original path
with endpoints $Z$ and $Y$.  

\item Given an open set $\om \subset \ree$, we shall
use lower case letters $x,y,z$, etc., to denote points on $\pom$, and capital letters
$X,Y,Z$, etc., to denote generic points in $\om$ (or more generally in $\ree\setminus \pom$).

\item We let $e_j$,  $j=1,2,\dots,n+1,$ denote the standard unit basis vectors in $\ree$.

\item The open $(n+1)$-dimensional Euclidean ball of radius $r$ will be denoted
$B(x,r)$ when the center $x$ lies on $\pom$, or $B(X,r)$ when the center
$X \in \om$.  A {\it surface ball} is denoted
$\Delta(x,r):= B(x,r) \cap\partial\Omega.$

\item Given a Euclidean ball $B$ or surface ball $\Delta$, its radius will be denoted
$r_B$ or $r_\Delta$, respectively.

\item Given a Euclidean or surface ball $B= B(X,r)$ or $\Delta = \Delta(x,r)$, its concentric
dilate by a factor of $\kappa >0$ will be denoted
$\kappa B := B(X,\kappa r)$ or $\kappa \Delta := \Delta(x,\kappa r).$

\item Given an open set $\om \subset \ree$, for $X \in \om$, we set $\delta(X):= \dist(X,\pom)$.

\item We let $H^n$ denote $n$-dimensional Hausdorff measure, and let
$\sigma := H^n\big\lfloor_{\,\pom}$ denote the surface measure on $\pom$.

\item For a Borel set $A\subset \ree$, we let $1_A$ denote the usual
indicator function of $A$, i.e. $1_A(x) = 1$ if $x\in A$, and $1_A(x)= 0$ if $x\notin A$.

\item For a Borel set $A\subset \ree$,  we let $\interior(A)$ denote the interior of $A$.


\item Given a Borel measure $\mu$, and a Borel set $A$, with positive and finite $\mu$ measure, we
set $\fint_A f d\mu := \mu(A)^{-1} \int_A f d\mu$.

\item We shall use the letter $I$ (and sometimes $J$)
to denote a closed $(n+1)$-dimensional Euclidean dyadic cube with sides
parallel to the co-ordinate axes, and we let $\ell(I)$ denote the side length of $I$.
If $\ell(I) =2^{-k}$, then we set $k_I:= k$.
Given an ADR set $E\subset \ree$, we use $Q$ (or sometimes $P$)
to denote a dyadic ``cube''
on $E$.  The
latter exist (cf. \cite{DS1}, \cite{Ch}), and enjoy certain properties
which we enumerate in Lemma \ref{lemmaCh} below.

\end{list}

\begin{definition}\label{defadr} ({\bf  ADR})  (aka {\it Ahlfors-David regular}).
We say that a  set $E \subset \ree$, of Hausdorff dimension $n$, is ADR
if it is closed, and if there is some uniform constant $C$ such that
\begin{equation} \label{eq1.ADR}
\frac1C\, r^n \leq \sigma\big(\Delta(x,r)\big)
\leq C\, r^n,\quad\forall r\in(0,\diam (E)),\ x \in E,
\end{equation}
where $\diam(E)$ may be infinite.
Here, $\Delta(x,r):= E\cap B(x,r)$ is the {\it surface ball} of radius $r$,
and as above, $\sigma:= H^n\lfloor_{\,E}$ 
is the ``surface measure" on $E$.
\end{definition}

\begin{definition}\label{defur} ({\bf UR}) (aka {\it uniformly rectifiable}).
An $n$-dimensional ADR (hence closed) set $E\subset \ree$
is UR if and only if it contains ``Big Pieces of
Lipschitz Images" of $\rn$ (``BPLI").   This means that there are positive constants $c_1$ and
$C_1$, such that for each
$x\in E$ and each $r\in (0,\diam (E))$, there is a
Lipschitz mapping $\rho= \rho_{x,r}: \rn\to \ree$, with Lipschitz constant
no larger than $C_1$,
such that 
$$
H^n\Big(E\cap B(x,r)\cap  \rho\left(\{z\in\rn:|z|<r\}\right)\Big)\,\geq\,c_1 r^n\,.
$$
\end{definition}

We recall that $n$-dimensional rectifiable sets are characterized by the
property that they can be
covered, up to a set of
$H^n$ measure 0, by a countable union of Lipschitz images of $\rn$;
we observe that BPLI  is a quantitative version
of this fact.

We remark
that, at least among the class of ADR sets, the UR sets
are precisely those for which all ``sufficiently nice" singular integrals
are $L^2$-bounded  \cite{DS1}.    In fact, for $n$-dimensional ADR sets
in $\ree$, the $L^2$ boundedness of certain special singular integral operators
(the ``Riesz Transforms"), suffices to characterize uniform rectifiability (see \cite{MMV} for the case $n=1$, and 
\cite{NToV} in general). 
We further remark that
there exist sets that are ADR (and that even form the boundary of a domain satisfying 
interior Corkscrew and Harnack Chain conditions),
but that are totally non-rectifiable (e.g., see the construction of Garnett's ``4-corners Cantor set"
in \cite[Chapter1]{DS2}).  Finally, we mention that there are numerous other characterizations of UR sets
(many of which remain valid in higher co-dimensions); cf. \cite{DS1,DS2}.

\begin{definition}\label{defurchar} ({\bf ``UR character"}).   Given a UR set $E\subset \ree$, its ``UR character"
is just the pair of constants $(c_1,C_1)$ involved in the definition of uniform rectifiability,
along with the ADR constant; or equivalently,
the quantitative bounds involved in any particular characterization of uniform rectifiability.
\end{definition}

\begin{definition} ({\bf Corkscrew condition}).  \label{def1.cork}
Following
\cite{JK}, we say that an open set $\Omega\subset \ree$
satisfies the {\it Corkscrew condition} if for some uniform constant $c>0$ and
for every surface ball $\Delta:=\Delta(x,r),$ with $x\in \partial\Omega$ and
$0<r<\diam(\partial\Omega)$, there is a ball
$B(X_\Delta,cr)\subset B(x,r)\cap\Omega$.  The point $X_\Delta\subset \Omega$ is called
a {\it Corkscrew point} relative to $\Delta.$  We note that  we may allow
$r<C\diam(\pom)$ for any fixed $C$, simply by adjusting the constant $c$.
In order to emphasize that $B(X_\Delta,cr) \subset \om$, we shall sometimes refer to this property as
the  {\it interior
Corkscrew condition}.
\end{definition}

\begin{definition}({\bf Harnack Chains, and the Harnack Chain condition}  \cite{JK}).  \label{def1.hc} 
Given two points $X,X' \in \Omega$, and a pair of numbers $M,N\geq1$, 
an $(M,N)$-{\it Harnack Chain connecting $X$ to $X'$},  is a chain of
open balls
$B_1,\dots,B_N \subset \Omega$, 
with $X\in B_1,\, X'\in B_N,$ $B_k\cap B_{k+1}\neq \emptyset$
and $M^{-1}\diam (B_k) \leq \dist (B_k,\partial\Omega)\leq M\diam (B_k).$
We say that $\Omega$ satisfies the {\it Harnack Chain condition}
if there is a uniform constant $M$ such that for any two points $X,X'\in\om$,
there is an $(M,N)$-Harnack Chain connecting them, with $N$ depending only on the ratio
$|X-X'|/\left(\min\big(\delta(X),\delta(X')\big)\right)$.
\end{definition}

\begin{definition}({\bf NTA}). \label{def1.nta} Again following \cite{JK}, we say that a
domain $\Omega\subset \ree$ is NTA ({\it Non-tangentially accessible}) if it satisfies the
Harnack Chain condition, and if both $\Omega$ and
$\Omega_{\rm ext}:= \ree\setminus \overline{\Omega}$ satisfy the Corkscrew condition.
\end{definition}

\begin{definition}({\bf CAD}). \label{def1.cad}  We say that a connected open set $\om \subset \ree$
is a CAD ({\it Chord-arc domain}), if it is NTA, and if $\pom$ is ADR.
\end{definition}

\begin{definition}({\bf Carrot path}).  \label{def1.carrot} Let $\Omega\subset \ree$ be an open set.
Given a point $X\in \om$, and a point $y \in \pom$, we say that 
a connected rectifiable path $\gamma=\gamma(y,X)$, with endpoints $y$ and $X$,
is a {\it carrot path} (more precisely, a {\it $\lambda$-carrot path}) connecting
$y$ to $X$, if $\gamma\setminus\{y\}\subset \om$, and if 
for some $\lambda\in(0,1)$ and for all $Z\in \gamma$,
\begin{equation}\label{eq1.carrot}
\lambda\, \ell\big(\gamma(y,Z)\big)\, \leq\,  \delta(Z)\,.
\end{equation}
With a slight abuse of terminology, we shall sometimes 
refer to such a path as a {\it $\lambda$-carrot path in}
$\om$, although of course the endpoint $y$ lies on $\pom$.
\end{definition}

A carrot path is sometimes referred to as a non-tangential path.

\begin{definition}({\bf $(\theta,\lambda,R)$-weak local John point}).  \label{def1.johnpoint} Let $X\in\om$, 
and for  constants $\theta\in (0,1]$, $\lambda\in (0,1)$, and $R\geq 2$, set
$$\Delta_X=\Delta_X^R:= B\big(X, R\delta(X)\big)\cap\pom\,.$$
We say that a point $X\in\Omega$ is a {\it $(\theta,\lambda,R)$-weak local John point}
if there is a Borel set $F\subset \Delta^R_X$, with
$\sigma(F)\geq \theta\sigma(\Delta^R_X)$, such that for every $y\in F$, there is a 
$\lambda$-carrot path connecting $y$ to $X$.
\end{definition}

Thus, a weak local John point is non-tangentially connected to an ample portion of the boundary,
locally.  We observe that one can always choose $R$ smaller, for possibly different values of
$\theta$ and $\lambda$, by moving from $X$ to a point $X'$ on a line segment joining $X$ to the boundary.

\begin{remark}\label{remark2}
We observe that it is a slight abuse of notation to write
$\Delta_X$,
since the latter is not centered on $\pom$, and thus it is not a true surface ball; on the other hand,
there are true surface balls,  $\Delta'_X:=\Delta(\hat{x},(R-1)\delta(X))$ and 
$\Delta''_X:=\Delta(\hat{x},(R+1)\delta(X))$, centered at a ``touching point"
$\hat{x}\in\pom$ with $\delta(X)=|X-\hat{x}|$,
which, respectively, are contained in, and contain, $\Delta_X$.   
\end{remark}

\begin{definition}({\bf Weak local John condition}).  \label{def1.john}
We say that $\om$ satisfies
a {\it weak local  John condition} if there are constants $\lambda\in (0,1)$,
$\theta\in(0,1]$, and $R\geq 2$, such that every $X\in\om$ is a $(\theta,\lambda,R)$-weak local John point.
\end{definition}

\begin{definition}({\bf IBPCAD}). \label{def1.ibpcad} We say that a connected open set $\om \subset \ree$
has {\it Interior Big Pieces of Chord-Arc Domains} (IBPCAD) if there exist positive constants $\eta$ and $C$, 
and $R\geq 2$, such that for every $X\in \Omega$, with $\delta(X)<\diam(\pom)$,
there is a  chord-arc domain $\Omega_X\subset \Omega$ satisfying
\begin{itemize}
\item $X\in \Omega_X$.
\item $\dist(X,\pom_X) \geq \eta \delta(X)$.
\item $\diam(\Omega_X) \leq C\delta(X)$.
\item $\sigma(\pom_X\cap \Delta^R_X) \geq \,\eta\, \sigma(\Delta^R_X) \,\approx_R\, \eta\,\delta(X)^n$.
\item The chord-arc constants of the domains $\Omega_X$ are uniform in $X$.
\end{itemize}
\end{definition}

\begin{remark}\label{remarkBPCAD}
In the presence of an interior Corkscrew condition, Definition \ref{def1.ibpcad}
is easily seen to be equivalent to the following more standard ``Big Pieces" condition: there is a constant $\eta>0$
(perhaps slightly different to that in Definition \ref{def1.ibpcad}), such that
for each surface ball $\Delta:=\Delta(x,r) = B(x,r) \cap \pom$, $x\in \pom$ and
$r <\diam(\pom)$, there is a chord-arc domain $\Omega_\Delta$ satisfying
\begin{itemize}
\item $\Omega_\Delta\subset B(x,r) \cap\Omega$.
\item $\Omega_\Delta$ contains a Corkscrew point $X_\Delta$, with
 $\dist(X_\Delta,\pom_\Delta) \geq \eta r$.
\item $\sigma(\pom_\Delta\cap \Delta) \geq \,\eta\, \sigma(\Delta)\approx \eta r^n$.
\item The chord-arc constants of the domains $\Omega_\Delta$ are uniform in $\Delta$.
\end{itemize}
\end{remark}

\begin{definition}\label{defAinfty}
({\bf $A_\infty$}, weak-$A_\infty$, and weak-$RH_q$). 
Given an ADR set $E\subset\ree$, 
and a surface ball
$\Delta_0:= B_0 \cap E$ centered at $E$,
we say that a Borel measure $\mu$ defined on $E$ belongs to
$A_\infty(\Delta_0)$ if there are positive constants $C$ and $s$
such that for each surface ball $\Delta = B\cap E$ centered on $E$, with $B\subseteq B_0$,
we have
\begin{equation}\label{eq1.ainfty}
\mu (A) \leq C \left(\frac{\sigma(A)}{\sigma(\Delta)}\right)^s\,\mu (\Delta)\,,
\qquad \mbox{for every Borel set } A\subset \Delta\,.
\end{equation}
Similarly, we say that $\mu \in$ weak-$A_\infty(\Delta_0)$ if 
for each surface ball $\Delta = B\cap E$ centered on $E$, with $2B\subseteq B_0$,
\begin{equation}\label{eq1.wainfty}
\mu (A) \leq C \left(\frac{\sigma(A)}{\sigma(\Delta)}\right)^s\,\mu (2\Delta)\,,
\qquad \mbox{for every Borel set } A\subset \Delta\,.
\end{equation}
We recall that, as is well known, the condition $\mu \in$ weak-$A_\infty(\Delta_0)$
is equivalent to the property that $\mu \ll \sigma$ in $\Delta_0$, and that for some $q>1$, the
Radon-Nikodym derivative $k:= d\mu/d\sigma$ satisfies
the weak reverse H\"older estimate
\begin{equation}\label{eq1.wRH}
\left(\fint_\Delta k^q d\sigma \right)^{1/q} \,\lesssim\, \fint_{2\Delta} k \,d\sigma\,
\approx\,  \frac{\mu(2\Delta)}{\sigma(\Delta)}\,,
\quad \forall\, \Delta = B\cap E,\,\, {\rm with} \,\, 2B\subseteq B_0\,,
\end{equation}
with $B$ centered on $E$.
We shall refer to the inequality in \eqref{eq1.wRH} as
an  ``$RH_q$" estimate, and we shall say that $k\in RH_q(\Delta_0)$ if $k$ satisfies \eqref{eq1.wRH}.
\end{definition}

\begin{definition}\label{deflocalAinfty} ({\bf Local $A_\infty$ and local weak-$A_\infty$}).
We say that harmonic measure $\hm$  is locally in $A_\infty$ (resp., locally in
weak-$A_\infty$) on $\pom$,
if there are  uniform positive constants $C$ and $s$
such that for every ball $B=B(x,r)$ centered on $\pom$, 
with radius  $r<\diam(\pom)/4$, and associated surface ball $\Delta=B\cap\pom$,
\begin{equation}\label{eq1.localainfty}
\hm^X (A) \leq C \left(\frac{\sigma(A)}{\sigma(\Delta)}\right)^s\,\hm^X (\Delta)\,,
\qquad \forall \, X\in\om\setminus 4B\,, \,\,\forall \mbox{ Borel } A\subset \Delta\,,
\end{equation}
or, respectively, that
\begin{equation}\label{eq1.localwainfty}
\hm^X (A) \leq C \left(\frac{\sigma(A)}{\sigma(\Delta)}\right)^s\,\hm^X (2\Delta)\,,
\qquad \forall \, X\in\om\setminus 4B\,, \,\,\forall \mbox{ Borel } A\subset \Delta\,;
\end{equation}
equivalently,
if for every ball $B$ and surface ball $\Delta=B\cap\pom$ as above,
and for each  
point $X\in\om\setminus 4B$, $\hm^X\in$ $A_\infty(\Delta)$ (resp.,
$\hm^X\in$ weak-$A_\infty(\Delta)$) with uniformly controlled $A_\infty$ (resp., weak-$A_\infty$) constants. 
\end{definition}

\begin{lemma}\label{lemmaCh}\textup{({\bf Existence and properties of the ``dyadic grid''})
\cite{DS1,DS2}, \cite{Ch}.}
Suppose that $E\subset \ree$ is an $n$-dimensional ADR set.  Then there exist
constants $ a_0>0,\, s>0$ and $C_1<\infty$, depending only on $n$ and the
ADR constant, such that for each $k \in \mathbb{Z},$
there is a collection of Borel sets (``cubes'')
$$
\mathbb{D}_k:=\{Q_{j}^k\subset E: j\in \mathfrak{I}_k\},$$ where
$\mathfrak{I}_k$ denotes some (possibly finite) index set depending on $k$, satisfying

\begin{list}{$(\theenumi)$}{\usecounter{enumi}\leftmargin=.8cm
\labelwidth=.8cm\itemsep=0.2cm\topsep=.1cm
\renewcommand{\theenumi}{\roman{enumi}}}

\item $E=\cup_{j}Q_{j}^k\,\,$ for each
$k\in{\mathbb Z}$.

\item If $m\geq k$ then either $Q_{i}^{m}\subset Q_{j}^{k}$ or
$Q_{i}^{m}\cap Q_{j}^{k}=\emptyset$.

\item For each $(j,k)$ and each $m<k$, there is a unique
$i$ such that $Q_{j}^k\subset Q_{i}^m$.

\item $\diam\big(Q_{j}^k\big)\leq C_1 2^{-k}$.

\item Each $Q_{j}^k$ contains some ``surface ball'' $\Delta \big(x^k_{j},a_02^{-k}\big):=
B\big(x^k_{j},a_02^{-k}\big)\cap E$.

\item $H^n\big(\big\{x\in Q^k_j:{\rm dist}(x,E\setminus Q^k_j)\leq \vartheta \,2^{-k}\big\}\big)\leq
C_1\,\vartheta^s\,H^n\big(Q^k_j\big),$ for all $k,j$ and for all $\vartheta\in (0,a_0)$.
\end{list}
\end{lemma}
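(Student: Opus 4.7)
The plan is to follow Christ's construction of a ``dyadic grid'' on a space of homogeneous type, specialized here to the $n$-dimensional ADR set $E$. First, for each $k\in\ZZ$ I would fix a maximal $2^{-k}$-separated set $\{x_j^k\}_{j\in\mathfrak{I}_k}\subset E$. By maximality, the balls $\Delta(x_j^k,2^{-k-1})$ are pairwise disjoint and the balls $\Delta(x_j^k,2^{-k})$ cover $E$; the ADR hypothesis then gives $\sigma(\Delta(x_j^k,2^{-k}))\approx 2^{-kn}$, so the index sets $\mathfrak{I}_k$ are locally finite at every scale, and a uniform bound on the number of centers near any given point is available.

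Second, I would build a parent map $\pi_k\colon\mathfrak{I}_k\to\mathfrak{I}_{k-1}$ by assigning to each $x_j^k$ some center $x_i^{k-1}$ with $|x_j^k-x_i^{k-1}|\leq 2^{-k+1}$ (such an $i$ exists by the covering property, and ties are broken by a fixed well-ordering of the indices). This gives a forest structure on $\bigsqcup_k \mathfrak{I}_k$. The cubes $Q_j^k$ are then defined by the rule $x\in Q_j^k$ iff the unique chain of ancestors of $x$ (determined at fine scales by nearest-center assignment, then extended to coarser scales by the parent map) matches the ancestor chain of $x_j^k$. Properties (i)--(iii) are built into this definition. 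Property (iv) follows from a telescoping estimate $|x-x_j^k|\leq \sum_{m\geq k}2^{-m+1}\leq C_1 2^{-k}$ valid for every $x\in Q_j^k$; and (v) holds with $a_0$ comparable to the separation constant, because any $y\in \Delta(x_j^k,a_0 2^{-k})$ is strictly closer to $x_j^k$ than to any other center at scale $k$, is therefore assigned to $x_j^k$ at that scale, and hence falls in $Q_j^k$.

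The hard part is the ``small boundary'' estimate (vi), which cannot hold for an arbitrary deterministic assignment; the tie-breaking rule must be chosen with care. Following Christ, the idea is to randomize the construction over a suitable probability space of perturbations of the nets (or equivalently, of the tie-breaking), and to show that the expected value of $\sigma\big(\{x\in Q_j^k:\dist(x,E\setminus Q_j^k)\leq \vartheta\, 2^{-k}\}\big)$ is bounded by $C\vartheta^s\sigma(Q_j^k)$ for some $s>0$ depending only on $n$ and the ADR constant. A Fubini/pigeonhole argument then extracts a single realization of the randomized construction that satisfies the bound simultaneously at all scales and for all cubes. The key geometric input is that the ADR property bounds, by an absolute constant, the number of centers $x_i^{k-1}$ at scale $k-1$ within distance $C2^{-k+1}$ of any fixed point, so the ``ambiguous'' strip near $\partial Q_j^k$ is controlled by finitely many pairwise bisector comparisons, each of which contributes a thin annular region whose $\sigma$-measure is a power of $\vartheta$. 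Alternatively, the statement can simply be quoted verbatim from \cite{Ch} or \cite{DS1}, whose hypotheses are satisfied here.
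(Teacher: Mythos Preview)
The paper does not prove this lemma; it is stated with attribution to \cite{DS1,DS2} and \cite{Ch}, followed only by remarks noting that Christ's result holds in general spaces of homogeneous type (with dyadic parameter $\delta\in(0,1)$, which may be taken to be $1/2$), and that the ADR version already appears in \cite{DS1,DS2}. Your final sentence---quote the result verbatim from \cite{Ch} or \cite{DS1}---is therefore exactly the paper's approach.

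Your sketch of the construction is mostly accurate for parts (i)--(v), but your account of (vi) misrepresents Christ's argument. Christ's proof in \cite{Ch} is \emph{not} a randomized construction followed by a Fubini/pigeonhole extraction; it is entirely deterministic. He fixes a partial order on the centers at each scale (compatible across scales via the parent map), defines the cubes via that order, and then proves the thin-boundary estimate by showing that any point $x$ in the $\vartheta 2^{-k}$-boundary layer of $Q_j^k$ must, at roughly $\log_2(1/\vartheta)$ consecutive finer scales, lie within a fixed multiple of the scale from a center \emph{other than} its own ancestor. An iteration using the doubling property then gives geometric decay of the measure of such points, yielding the exponent $s>0$. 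The randomized-cubes technology you describe is due to later authors (e.g., Nazarov--Treil--Volberg, Hyt\"onen--Kairema) and serves different purposes. This does not affect the correctness of simply citing the result, but if you intend to sketch Christ's proof you should correct this point.
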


A few remarks are in order concerning this lemma.

\begin{list}{$\bullet$}{\leftmargin=0.4cm  \itemsep=0.2cm}

\item In the setting of a general space of homogeneous type, this lemma has been proved by Christ
\cite{Ch}, with the
dyadic parameter $1/2$ replaced by some constant $\delta \in (0,1)$.
In fact, one may always take $\delta = 1/2$ (cf.  \cite[Proof of Proposition 2.12]{HMMM}).
In the presence of the Ahlfors-David
property (\ref{eq1.ADR}), the result already appears in \cite{DS1,DS2}. Some predecessors of this construction have appeared in \cite{David88} and \cite{David91}.

\item  For our purposes, we may ignore those
$k\in \mathbb{Z}$ such that $2^{-k} \gtrsim {\rm diam}(E)$, in the case that the latter is finite.

\item  We shall denote by  $\mathbb{D}=\mathbb{D}(E)$ the collection of all relevant
$Q^k_j$, i.e., $$\mathbb{D} := \cup_{k} \mathbb{D}_k,$$
where, if $\diam (E)$ is finite, the union runs
over those $k$ such that $2^{-k} \lesssim  {\rm diam}(E)$.

\item Properties $(iv)$ and $(v)$ imply that for each cube $Q\in\mathbb{D}_k$,
there is a point $x_Q\in E$, a Euclidean ball $B(x_Q,r_Q)$ and a surface ball
$\Delta(x_Q,r_Q):= B(x_Q,r_Q)\cap E$ such that
$r_Q\approx 2^{-k} \approx {\rm diam}(Q)$
and \begin{equation}\label{cube-ball}
\Delta(x_Q,r_Q)\subset Q \subset \Delta(x_Q,Cr_Q),\end{equation}
for some uniform constant $C$.
We shall denote this ball and surface ball by
\begin{equation}\label{cube-ball2}
B_Q:= B(x_Q,r_Q) \,,\qquad\Delta_Q:= \Delta(x_Q,r_Q),\end{equation}
and we shall refer to the point $x_Q$ as the ``center'' of $Q$.

\item For a dyadic cube $Q\in \mathbb{D}_k$, we shall
set $\ell(Q) = 2^{-k}$, and we shall refer to this quantity as the ``length''
of $Q$.  Evidently, $\ell(Q)\approx \diam(Q).$

\item For a dyadic cube $Q \in \mathbb{D}$, we let $k(Q)$ denote the dyadic generation
to which $Q$ belongs, i.e., we set  $k = k(Q)$ if
$Q\in \mathbb{D}_k$; thus, $\ell(Q) =2^{-k(Q)}$.

\item For a pair of cubes $Q',Q \in \mathbb{D}$, if $Q'$ is a dyadic child of $Q$,
i.e., if $Q'\subset Q$, and $\ell(Q) =2\ell(Q')$, then we write $Q'\lhd Q$.

\end{list}

With the dyadic cubes in hand, we may now define the notion of a Corkscrew point relative to a cube $Q$.

\begin{definition}({\bf Corkscrew point relative to $Q$}).  \label{def1.CScube}
Let $\om$ satisfy the Corkscrew condition (Definition \ref{def1.cork}), suppose that $\pom$ is ADR,
and let $Q\in \dd(\pom)$.
A {\it Corkscrew point relative to $Q$} is simply a Corkscrew point relative to the surface ball
$\Delta_Q$ defined \eqref{cube-ball}-\eqref{cube-ball2}.
\end{definition}

\begin{definition}({\bf Coherency and Semi-coherency}). \cite{DS2}.
\label{d3.11}   
Let $E\subset \ree$ be an ADR set.
Let $\sbf\subset \dd(E)$. We say that $\sbf$ is
{\it coherent} if the following conditions hold:
\begin{itemize}\itemsep=0.1cm

\item[$(a)$] $\sbf$ contains a unique maximal element $Q(\sbf)$ which contains all other elements of $\sbf$ as subsets.

\item[$(b)$] If $Q$  belongs to $\sbf$, and if $Q\subset \widetilde{Q}\subset Q(\sbf)$, then $\widetilde{Q}\in {\bf S}$.

\item[$(c)$] Given a cube $Q\in \sbf$, either all of its children belong to $\sbf$, or none of them do.

\end{itemize}
We say that $\sbf$ is {\it semi-coherent} if conditions $(a)$ and $(b)$ hold. 
\end{definition}

\section{Preliminaries}\label{s2}

We begin by recalling a bilateral version of the
David-Semmes ``Corona decomposition" of a UR set.   We refer the reader to \cite{HMM} for the proof.

\begin{lemma}\label{lemma2.1}\textup{(\cite[Lemma 2.2]{HMM})}  
Let $E\subset \ree$ be a UR set of dimension $n$.  Then given any positive constants
$\eta\ll 1$
and $K\gg 1$, there is a disjoint decomposition
$\dd(E) = \G\cup\B$, satisfying the following properties.
\begin{enumerate}
\item  The ``Good" collection $\G$ is further subdivided into
disjoint stopping time regimes, such that each such regime {\bf S} is coherent (Definition \ref{d3.11}).

\smallskip
\item The ``Bad" cubes, as well as the maximal cubes $Q(\sbf)$, $\sbf\subset\G$, satisfy a Carleson
packing condition:
$$\sum_{Q'\subset Q, \,Q'\in\B} \sigma(Q')
\,\,+\,\sum_{\sbf\subset\G: Q(\sbf)\subset Q}\sigma\big(Q(\sbf)\big)\,\leq\, C_{\eta,K}\, \sigma(Q)\,,
\quad \forall Q\in \dd(E)\,.$$

\smallskip
\item For each $\sbf\subset\G$, there is a Lipschitz graph $\Gamma_{\sbf}$, with Lipschitz constant
at most $\eta$, such that, for every $Q\in \sbf$,
\begin{equation}\label{eq2.2a}
\sup_{x\in \Delta_Q^*} \dist(x,\Gamma_{\sbf} )\,
+\,\sup_{y\in B_Q^*\cap\Gamma_{\sbf}}\dist(y,E) < \eta\,\ell(Q)\,,
\end{equation}
where $B_Q^*:= B(x_Q,K\ell(Q))$ and $\Delta_Q^*:= B_Q^*\cap E$, and $x_Q$ is the ``center"
of $Q$ as in \eqref{cube-ball}-\eqref{cube-ball2}.
\end{enumerate}
\end{lemma}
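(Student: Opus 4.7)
The plan is to start from the standard (unilateral) David--Semmes corona decomposition for UR sets, and then strengthen it to a bilateral version by stopping also when the reverse approximation (graph close to $E$) begins to degenerate. Recall that, as one of the many equivalent characterizations of UR, a set $E$ is UR if and only if the bilateral weak geometric lemma holds: for every $\eta,K$, the collection
\[
\B_0 \,:=\, \big\{Q\in\dd(E): b\beta_\infty\big(B_Q^*\big) > \eta\big\},
\]
where $b\beta_\infty(B_Q^*)$ is the bilateral $\beta$-number measuring the distance in $B_Q^*:=B(x_Q,K\ell(Q))$ between $E$ and the best approximating $n$-plane, satisfies a Carleson packing estimate with constant $C_{\eta,K}$. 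This is the starting point.

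First I would declare the initial "bad" family $\B$ to contain $\B_0$. For each remaining (good) cube, fix an $n$-plane $P_Q$ realizing the bilateral approximation up to a factor $2$. Then I would run a stopping time algorithm to split the good cubes into coherent regimes $\sbf$. Pick a maximal good cube not yet assigned as a new $Q(\sbf)$, record $P_\sbf := P_{Q(\sbf)}$, and descend: a good child $Q'$ of a cube $Q\in\sbf$ is added to $\sbf$ exactly when the angle between $P_{Q'}$ and $P_\sbf$ stays $\leq \eta$ and the "height" difference (measured at $x_{Q'}$) is $\leq \eta\,\ell(Q')$; otherwise $Q'$ is declared a new maximal element of a fresh regime. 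Coherency of each $\sbf$ is built into the stopping rule, and the packing of the new maximal cubes $Q(\sbf)$ follows from a geometric square-function estimate: a change of orientation/position at $Q'$ contributes a fixed amount to a quantity bounded by the bilateral $\beta$-number sum, hence by the BWGL packing. Adding this packing to the packing of $\B_0$ gives (2).

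For (3), I would construct $\Gamma_\sbf$ as a Lipschitz graph over $P_\sbf$. Let $\pi:\ree\to P_\sbf$ be the orthogonal projection. On the "shadow" $\pi(Q(\sbf))$, one already has a good approximation at the top scale. Inside each $\sbf$-cube $Q$, the bilateral approximation by $P_Q$, combined with the small angle $\angle(P_Q,P_\sbf)\lesssim\eta$ guaranteed by coherency, produces a candidate height function $h_Q:\pi(B_Q^*)\to P_\sbf^\perp$ with Lipschitz constant $\lesssim\eta$; these local heights vary by $O(\eta\ell(Q))$ at overlapping scales. Patch them together on the stopping boundary using a Whitney partition of unity on $P_\sbf$ associated to the maximal $\sbf$-cubes, and extend below the stopping scale using a standard Whitney--McShane extension to obtain $h_\sbf$ globally defined on $P_\sbf$ with Lipschitz norm $\lesssim \eta$. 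Setting $\Gamma_\sbf:=\{(x,h_\sbf(x)):x\in P_\sbf\}$, one then verifies \eqref{eq2.2a} in each $B_Q^*\cap E$ from the bilateral approximation at $Q$ and the smallness of $h_\sbf - h_Q$ on $\pi(B_Q^*)$.

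The main obstacle will be the bilateral control in the enlarged balls $B_Q^*$ (rather than $B_Q$). Cubes in $\sbf$ need not have their full $K$-enlargement sitting inside their stopping-time parent's approximation zone, so one must check that the stopping rule (measured only at the "center" scales) together with coherency is strong enough to propagate the bilateral estimate from $P_Q$ to $\Gamma_\sbf$ uniformly throughout $B_Q^*$. This is handled by choosing the stopping thresholds small compared to $1/K$ and iterating the height-difference control across the chain of ancestors within $\sbf$, using the Carleson packing to sum the errors geometrically.
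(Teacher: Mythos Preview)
The paper does not give its own proof of this lemma; it simply cites \cite[Lemma~2.2]{HMM} and refers the reader there. So there is no in-paper argument to compare against directly.

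That said, your outline is essentially the strategy carried out in \cite{HMM}. The one structural difference worth noting is that in \cite{HMM} the authors do not rebuild the corona decomposition from scratch using BWGL; instead they start from the unilateral David--Semmes corona decomposition of \cite{DS1} (which already supplies the Lipschitz graphs $\Gamma_\sbf$ and the packing of bad cubes and maximal cubes), and then refine the stopping time regimes by additionally stopping whenever the reverse approximation $\sup_{y\in B_Q^*\cap\Gamma_\sbf}\dist(y,E)$ gets large. The extra stopping cubes pack by BWGL, and the graph $\Gamma_\sbf$ is inherited from the unilateral decomposition rather than reconstructed. Your from-scratch approach via BWGL is also valid and leads to the same conclusion, but it duplicates the graph-construction work already done in \cite{DS1}; the \cite{HMM} route is shorter precisely because it treats the unilateral result as a black box.

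The obstacle you flag (propagating control from $B_Q$ to the enlarged ball $B_Q^*$) is real, and your proposed fix (take the stopping thresholds small relative to $1/K$ and telescope along ancestors in $\sbf$) is the correct mechanism; in the \cite{HMM} organization this issue is largely absorbed into the unilateral construction, since \cite{DS1} already builds $\Gamma_\sbf$ to approximate $E$ on the enlarged balls.
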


We mention that David and Semmes, in \cite{DS1}, had
previously proved a unilateral version of Lemma \ref{lemma2.1}, 
in which the bilateral estimate \eqref{eq2.2a} 
is replaced by the unilateral bound
\begin{equation}\label{eq2.5}
\sup_{x\in \Delta_Q^*} \dist(x,\Gamma_{\sbf} )\,
<\, \eta\,\ell(Q)\,,\qquad \forall\,Q\in \sbf\,.
\end{equation} 

Next, we make a standard Whitney decomposition of $\Omega_E:=\ree\setminus E$, for a given UR set $E$
(in particular, 
$\om_E$ is open, since UR sets are closed by definition).
Let $\mathcal{W}=\W(\om_E)$ denote a collection
of (closed) dyadic Whitney cubes of $\om_E$, so that the cubes in $\mathcal{W}$
form a pairwise non-overlapping covering of $\om_E$, which satisfy
\begin{equation}\label{Whintey-4I}
4 \diam(I)\leq
\dist(4I,\pom)\leq \dist(I,\pom) \leq 40\diam(I)\,,\qquad \forall\, I\in \mathcal{W}\,\end{equation}
(just dyadically divide the standard Whitney cubes, as constructed in  \cite[Chapter VI]{St},
into cubes with side length 1/8 as large)
and also
$$(1/4)\diam(I_1)\leq\diam(I_2)\leq 4\diam(I_1)\,,$$
whenever $I_1$ and $I_2$ touch.

We fix a small parameter $\tau_0>0$, so that
for any $I\in \W$, and any $\tau \in (0,\tau_0]$,
the concentric dilate
\begin{equation}\label{whitney1}
I^*(\tau):= (1+\tau) I
\end{equation} 
still satisfies the Whitney property
\begin{equation}\label{whitney}
\diam I\approx \diam I^*(\tau) \approx \dist\left(I^*(\tau), E\right) \approx \dist(I,E)\,, \quad 0<\tau\leq \tau_0\,.
\end{equation}
Moreover,
for $\tau\leq\tau_0$ small enough, and for any $I,J\in \W$,
we have that $I^*(\tau)$ meets $J^*(\tau)$ if and only if
$I$ and $J$ have a boundary point in common, and that, if $I\neq J$,
then $I^*(\tau)$ misses $(3/4)J$.


Pick two parameters $\eta\ll 1$ and $K\gg 1$ (eventually, we shall take
$K=\eta^{-3/4}$).   For $Q\in \dd(E)$, define
\begin{equation}\label{eq3.1}
\W^0_Q:= \left\{I\in \W:\,\eta^{1/4} \ell(Q)\leq \ell(I)
 \leq K^{1/2}\ell(Q),\ \dist(I,Q)\leq K^{1/2} \ell(Q)\right\}.
 \end{equation}
 
\begin{remark}\label{remark:E-cks} 
We note that $\W^0_Q$ is non-empty,
 provided that we choose $\eta$ small enough, and $K$ large enough, depending only on dimension and ADR,
since the ADR condition implies that $\om_E$ satisfies a Corkscrew condition.  In the sequel, we shall always
assume that $\eta$ and $K$ have been so chosen.
 \end{remark}
 
Next, we recall a construction in  \cite[Section 3]{HMM}, leading up to and including in particular
\cite[Lemma 3.24]{HMM}.   We summarize this construction as follows. 
\begin{lemma}\label{lemma2.7}
Let $E\subset \ree$ be 
$n$-dimensional UR, and set $\om_E:= \ree\setminus E$.  Given positive constants
$\eta\ll 1$
and $K\gg 1$, as in \eqref{eq3.1} and Remark \ref{remark:E-cks},  
let $\dd(E) = \G\cup\B$, be the corresponding 
bilateral Corona decomposition of Lemma \ref{lemma2.1}. 
Then for each $\sbf\subset \G$, and for each $Q\in \sbf$, the collection 
$\W^0_Q$ in \eqref{eq3.1} has an augmentation $\W^*_Q\subset \W$ satisfying the following properties.
\begin{enumerate}
\item $\W^0_Q\subset \W^*_Q = \W_Q^{*,+}\cup\W_Q^{*,-}$,
where (after a suitable rotation of coordinates)
each $I \in \W_Q^{*,+}$ lies above the Lipschitz graph $\Gamma_{\sbf}$
of Lemma \ref{lemma2.1},  each $I \in \W_Q^{*,-}$ lies below $\Gamma_{\sbf}$.
Moreover, if $Q'$ is a child of $Q$, also belonging to
$\sbf$, then $\W_Q^{*,+}$ (resp. $\W_Q^{*,-}$) belongs to the same connected
component of
$\om_E$ as does $\W_{Q'}^{*,+}$ (resp. $\W_{Q'}^{*,-}$)
and  $\W_{Q'}^{*,+}\cap \W_{Q}^{*,+}\neq \emptyset$ (resp.,
$\W_{Q'}^{*,-}\cap\W_{Q}^{*,-}\neq \emptyset$). 

\smallskip
\item There are uniform constants $c$ and $C$ such that
\begin{equation}\label{eq2.whitney2}
\begin{array}{c}
c\eta^{1/2} \ell(Q)\leq \ell(I) \leq CK^{1/2}\ell(Q)\,, \quad \forall I\in \mathcal{W}^*_Q,
\\[5pt]
\dist(I,Q)\leq CK^{1/2} \ell(Q)\,,\quad\forall I\in \mathcal{W}^*_Q,
\\[5pt]
c\eta^{1/2} \ell(Q)\leq\dist(I^*(\tau),\Gamma_{\sbf})\,,\quad \forall I\in \mathcal{W}^*_Q\,,\quad \forall 
\tau\in (0,\tau_0]\,.
\end{array}
\end{equation}
\end{enumerate}

Moreover, given $\tau\in(0,\tau_0]$, set
\begin{equation}\label{eq3.3aa}
U^\pm_Q=U^\pm_{Q,\tau}:= \bigcup_{I\in \W^{*,\pm}_Q} {\rm int}\left(I^*(\tau)\right)\,,\qquad U_Q:= U_Q^+\cup U_Q^-\,,
\end{equation}
and given $\sbf'$, a semi-coherent subregime of $\sbf$, define 
\begin{equation}\label{eq3.2}
\Omega_{\sbf'}^\pm = \Omega_{\sbf'}^\pm(\tau) := \bigcup_{Q\in\sbf'} U_Q^{\pm}\,.
\end{equation}
Then 
each of $\Omega^\pm_{\sbf'}$ is a CAD, with Chord-arc constants
depending only on $n,\tau,\eta, K$, and the
ADR/UR constants for $\pom$.  
\end{lemma}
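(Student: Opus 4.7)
The plan is to prove the two components of "chord-arc domain" separately: the ADR property of $\partial\Omega^\pm_{\sbf'}$, and the NTA property of $\Omega^\pm_{\sbf'}$ (interior and exterior Corkscrew plus Harnack Chain). I focus on $\Omega^+_{\sbf'}$, since $\Omega^-_{\sbf'}$ is handled by a symmetric argument. The key geometric observation, combining \eqref{eq2.whitney2} with \eqref{eq2.2a}, is that whenever $Q\in\sbf$ and $I\in\W^{*,+}_Q$, one has $\dist(I^*(\tau),\Gamma_{\sbf})\gtrsim \eta^{1/2}\ell(Q)$, while $\dist(I,Q)\lesssim K^{1/2}\ell(Q)$ and $Q$ lies within $\eta\ell(Q)$ of $\Gamma_{\sbf}$. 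Since $\eta\ll\eta^{1/2}\ll K^{1/2}$, this traps $\Omega^+_{\sbf}$ strictly on one side of $\Gamma_{\sbf}$ yet at a scale-invariant distance from it, and the same holds for the subregion $\Omega^+_{\sbf'}$.

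I would begin with the Harnack Chain condition. Inside a single $U^+_Q$, all fattened Whitney cubes have side lengths in the fixed range $[c\eta^{1/2}\ell(Q),\,CK^{1/2}\ell(Q)]$ and lie in a common ball of radius $O(K^{1/2}\ell(Q))$, so any two of their centers are joined by an $O(1)$-chain of overlapping balls. To pass between $U^+_Q$ and $U^+_{Q'}$ when $Q'\lhd Q$ with both in $\sbf$, Lemma \ref{lemma2.7}(1) furnishes a shared Whitney cube in $\W^{*,+}_Q\cap\W^{*,+}_{Q'}$. Semi-coherency then implies that any two cubes of $\sbf'$ share a common ancestor in $\sbf'$, so iterating these two moves yields Harnack chains between arbitrary points of $\Omega^+_{\sbf'}$, of length logarithmic in the ratio of scales.

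Next I would verify the Corkscrew conditions. For interior Corkscrews, given $x\in\partial\Omega^+_{\sbf'}$ at scale $0<r\leq \diam(\Omega^+_{\sbf'})$, select $Q\in\sbf'$ with $\ell(Q)\approx r$ and $\dist(x,Q)\lesssim r$, and take the center of any $I\in\W^{*,+}_Q$ near $x$, which has depth $\gtrsim \eta^{1/2}r$. For exterior Corkscrews there are two cases. If $x$ lies on a face of some $I^*(\tau)$ adjoining a Whitney cube $J$ that is not in $\W^*_{Q''}$ for any $Q''\in\sbf'$, then the center of $J$ serves. Otherwise $x$ is a "bottom" boundary point, within $O(\eta^{1/2}\ell(Q))$ of $\Gamma_{\sbf}$; the key separation above then guarantees that a Corkscrew point for the lower half-space of the Lipschitz graph $\Gamma_{\sbf}$ at scale $r$ lies outside $\Omega^+_{\sbf'}$.

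Finally, for the ADR property of $\partial\Omega^+_{\sbf'}$, the lower bound is an automatic consequence of the two Corkscrew conditions via a standard telescoping argument. For the upper bound, a surface ball of $\partial\Omega^+_{\sbf'}$ of radius $r$ is covered, up to surface measure, by boundary faces of $O(1)$ fattened Whitney cubes of side length $\approx r$, together with a portion of $\Gamma_{\sbf}$ of diameter $\lesssim r$; each contributes $\lesssim r^n$, the second because $\Gamma_{\sbf}$ is Lipschitz. The main obstacle is the bookkeeping needed in the exterior Corkscrew and upper ADR estimates: one must combine the dyadic Whitney geography of $\Omega^+_{\sbf'}$, the coherency of $\sbf'$, and the bilateral approximation \eqref{eq2.2a} to confirm that the sawtooth boundary has bounded local combinatorial complexity, uniformly in $n$, $\tau$, $\eta$, $K$, and the UR constants of $\pom$.
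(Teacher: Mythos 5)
The paper does not actually prove Lemma~\ref{lemma2.7}; it explicitly punts to the construction in \cite[Section 3]{HMM}, culminating in \cite[Lemma 3.24]{HMM}, and merely restates the conclusions. Your write-up is therefore a from-scratch attempt, and it has a structural gap: the statement asks you to \emph{construct} the augmentation $\W^*_Q \supset \W^0_Q$ (properties (1) and (2)), and your sketch simply takes $\W^*_Q$ and $U^\pm_Q$ as given and proceeds to verify the CAD properties. In particular you never show that $U^+_Q$ is connected (your claim that comparable side lengths in a common ball give an $O(1)$-chain of \emph{overlapping} fattened cubes is just false --- Whitney cubes of comparable size in the same ball are typically disjoint), nor that consecutive Whitney regions $U^+_Q$, $U^+_{Q'}$ for a parent/child pair $Q'\lhd Q$ in $\sbf$ share a cube. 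Those facts are precisely what the augmentation is engineered to produce; the whole difficulty of the lemma lives there, and it is absent from your argument.

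There is also a concrete error in the ADR step. You claim that a surface ball of $\partial\Omega^+_{\sbf'}$ of radius $r$ is covered, up to measure, by boundary faces of $O(1)$ fattened Whitney cubes of side length $\approx r$, plus a piece of $\Gamma_\sbf$. This is wrong: the sawtooth $\Omega^+_{\sbf'}$ is a union of Whitney regions over all $Q\in\sbf'$, so near the ``bottom'' of the sawtooth its boundary is built from Whitney cubes of arbitrarily small scale, and the number of contributing cubes of side $2^{-k}$ grows without bound as $k\to\infty$. The upper ADR estimate cannot be localized to $O(1)$ cubes at the top scale; one must sum $\sum_I \ell(I)^n$ over all Whitney cubes meeting the boundary inside $B(x,r)$, across all dyadic generations, and control that sum by $r^n$ via a Carleson packing argument exploiting the coherency of $\sbf'$ and the Lipschitz character of $\Gamma_\sbf$. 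This is the substantial part of \cite[Appendix A]{HMM} (and is also why property (2), the third line of \eqref{eq2.whitney2}, matters), and your sketch skips it entirely. The exterior Corkscrew discussion has a similar, lesser issue: when $x$ abuts a Whitney cube $J\notin\cup_{Q''\in\sbf'}\W^*_{Q''}$, one needs not only that $J$ is missed but that a fixed-proportion ball inside $J^*$ is missed, and that $\ell(J)\approx r$; otherwise the Corkscrew constant degenerates.
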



\begin{remark}\label{remark2.12} In particular, for each $\sbf\subset \G$,
if $Q'$ and $Q$ belong to $\sbf$, and 
if $Q'$ is a dyadic child of $Q$, then $U_{Q'}^+\cup U_{Q}^+$ is Harnack Chain connected,
and every pair of points $X,Y\in U_{Q'}^+\cup U_Q^+$ may be connected by a Harnack Chain 
 in $\Omega_E$ 
of length at most $C= C(n,\tau,\eta,K,\textup{ADR/UR})$.  The same is true for  
$U_{Q'}^-\cup U_{Q}^-$.
\end{remark}

\begin{remark}\label{remark2.13} Let $0<\tau\leq \tau_0/2$.
Given any $\sbf\subset \G$, and any semi-coherent subregime
$\sbf'\subset \sbf$, define $\om_{\sbf'}^\pm=\om_{\sbf'}^\pm(\tau)$ as in \eqref{eq3.2},
and similarly set $\widehat{\om}_{\sbf'}^\pm=\om_{\sbf'}^\pm(2\tau)$.  Then by construction, for
any $X\in \overline{\om_{\sbf'}^\pm}$, 
$$ 
\dist(X,E) \approx \dist(X, \partial \widehat{\om}_{\sbf'}^\pm)\,,$$
where of course the implicit constants depend on $\tau$.
\end{remark}

As in \cite{HMM}, it will be useful for us to extend the definition of the Whitney region $U_Q$ to the case that
$Q\in\B$, the ``bad" collection of Lemma \ref{lemma2.1}.   Let $\W_Q^*$ be the augmentation of $\W_Q^0$
as constructed in Lemma \ref{lemma2.7}, and set
\begin{equation}\label{Wdef}
\W_Q:=\left\{
\begin{array}{l}
\W_Q^*\,,
\,\,Q\in\G,
\\[6pt]
\W_Q^0\,,
\,\,Q\in\B
\end{array}
\right.\,.
\end{equation}
For $Q \in\G$ we shall henceforth simply write $\W_Q^\pm$ in place of $\W_Q^{*,\pm}$.
For arbitrary $Q\in \dd(E)$, good or bad, we may then define
\begin{equation}\label{eq3.3bb}
U_Q=U_{Q,\tau}:= \bigcup_{I\in \W_Q} {\rm int}\left(I^*(\tau)\right)\,.
\end{equation}
Let us note that for $Q\in\G$, the latter definition agrees with that in \eqref{eq3.3aa}. Note that by construction
\begin{equation}\label{dist:UQ-pom}
U_Q\subset\{Y\in\Omega:\ \dist(Y,\pom)> c\eta^{1/2}\ell(Q)\}\cap B(x_Q, CK^{1/2}\ell(Q)),
\end{equation}
for some uniform constants $C\ge 1$ and $0<c<1$ (see \eqref{Whintey-4I}, \eqref{eq3.1}, and \eqref{eq2.whitney2}).
In particular, for every $Q\in\dd$ if follows that
\begin{equation}\label{def:BQ*}
\bigcup_{Q'\in\dd_Q} U_{Q'}
\subset B(x_Q,K\ell(Q))=:B_Q^*.
\end{equation}


For future reference, we introduce dyadic sawtooth regions as follows.  Set
\begin{equation}\label{eq3.4a}
\dd_Q:=\left\{Q'\in\dd(E):Q'\subset Q\right\}\,,
\end{equation}
and given $k\ge 1$,
\begin{equation}\label{eq3.4aaa}
\dd_Q^k:=\left\{Q'\in\dd(E):Q'\subset Q, \ \ell(Q')=2^{-k}\,\ell(Q)\right\}\,,
\end{equation}
Given a family $\mathcal{F}$ of disjoint cubes $\{Q_j\}\subset \mathbb{D}$, we define
the {\it global discretized sawtooth} relative to $\F$ by
\begin{equation}\label{eq2.discretesawtooth1}
\dd_{\F}:=\dd\setminus \bigcup_{Q_j\in \F} \dd_{Q_j}\,,
\end{equation}
i.e., $\dd_{\F}$ is the collection of all $Q\in\dd$ that are not contained in any $Q_j\in\F$.
 We may allow $\F$ to be empty, in which case $\dd_\F=\dd$. 
Given some fixed cube $Q$,
the {\it local discretized sawtooth} relative to $\F$ by
\begin{equation}\label{eq2.discretesawtooth2}
\dd_{\F,Q}:=\dd_Q\setminus \bigcup_{Q_j\in \F} \dd_{Q_j}=\dd_\F\cap\dd_Q.
\end{equation}
Note that with this convention, $\dd_Q=\dd_{\textup{\O},Q}$ (i.e., if one takes $\F=\emptyset$
in \eqref{eq2.discretesawtooth2}).


\section{Proof of Theorem \ref{t1}}\label{s3}

In the proof of  Theorem \ref{t1}, we shall employ a two-parameter
induction argument, which is a refinement of the
method of ``extrapolation" of Carleson measures.   The latter is a bootstrapping scheme for
lifting the Carleson measure constant, developed by J. L. Lewis \cite{LM}, and based on
the corona construction of Carleson \cite{Car} and Carleson and Garnett \cite{CG}
(see also \cite{HL}, \cite{AHLT}, \cite{AHMTT}, \cite{HM-TAMS}, \cite{HM-I},\cite{HMM}).

\subsection{Step 1: the set-up}\label{ss3.1}
To set the stage for the induction procedure, let us begin by making a preliminary reduction.
It will be convenient to work with a certain dyadic version of  Definition \ref{def1.ibpcad}.
To this end,
 let $X\in \om$, with $\delta(X) < \diam(\pom)$, and set
$\Delta_X=\Delta_X^R=B(X,R\delta(X))\cap\pom$, for some fixed $R\geq 2$ as in Definition
\ref{def1.johnpoint}.
Let $\hat{x}\in\pom$ be a touching point for $X$, i.e., $|X-\hat{x}|=\delta(X)$. 
Choose $X_1$ on the line segment joining $X$ to $\hat{x}$, with
$\delta(X_1) = \delta(X)/2$, and set 
$\Delta_{X_1}=B(X_1,R\delta(X)/2)\cap \pom$.  Note that $B(X_1,R\delta(X)/2)\subset
B(X,R\delta(X))$, and furthermore,
$$\dist\Big(B(X_1,R\delta(X)/2), \partial B(X,R\delta(X)\Big) > \frac{R-1}{2}\delta(X) \geq \frac12\delta(X).$$
We may therefore 
cover $\Delta_{X_1}$ by a disjoint collection
$\{Q_i\}_{i=1}^N\subset\dd(\pom)$, of equal length $\ell(Q_i)\approx \delta(X)$, 
such that each $Q_i\subset \Delta_X$, and such that
the implicit constants depend only on $n$ and ADR, and thus 
the cardinality $N$ 
of the collection depends on $n$, ADR, and $R$. 
With $E=\pom$, we make the Whitney decomposition of the set
$\om_E =\ree\setminus E$ as in Section \ref{s2} (thus, $\om\subset\om_E$). Moreover, for
sufficiently small $\eta$ and sufficiently large $K$ in \eqref{eq3.1},
we then have that $X\in U_{Q_i}$ for each $i=1,2,\dots,N$.  By hypothesis, there are constants
$\theta_0\in (0,1],\lambda_0\in (0,1)$, and $R\geq 2$ as above, such that every $Z\in\om$ is a
$(\theta_0,\lambda_0,R)$-weak local John point (Definition \ref{def1.johnpoint}).  
In particular, this is true for
$X_1$, hence there is a Borel set $F\subset \Delta_{X_1}$, with
$\sigma(F) \geq \theta_0 \sigma(\Delta_{X_1})$, such that every
$y\in F$ may be connected to $X_1$ via a $\lambda_0$-carrot path.
By ADR, $\sigma(\Delta_{X_1})\approx \sum_{i=1}^N\sigma(Q_i)$
and thus by pigeon-holing, there is at least one $Q_i=:Q$ such that
$\sigma(F\cap Q) \geq \theta_1\sigma(Q)$, with $\theta_1$ depending only on
$\theta_0$, $n$ and ADR.  Moreover, the $\lambda_0$-carrot path connecting
each $y\in F$ to $X_1$ may be extended to a $\lambda_1$-carrot path
connecting $y$ to $X$, where $\lambda_1$ depends only on $\lambda_0$.

We have thus reduced matters to the following dyadic scenario:
let $Q\in\dd(\pom)$,  let $U_Q=U_{Q,\tau}$ be the associated Whitney region as in  \eqref{eq3.3bb},
with $\tau \leq \tau_0/2$ fixed, and suppose that $U_Q$ meets $\om$
(recall that by construction $U_Q\subset \om_E=\ree\setminus E$, with $E=\pom$).
For $X\in U_Q\cap\om$, and for a constant $\lambda \in (0,1)$, let
\begin{equation}\label{eqdefFcarQ}
F_{car}(X,Q)= F_{car}(X,Q,\lambda) 
\end{equation}
denote the set of $y\in Q$ which may be joined to $X$ by a $\lambda$-carrot path
$\gamma(y,X)$, and for $\theta \in (0,1]$, set
\begin{equation}\label{eqdefTQ}
T_Q=T_Q(\theta,\lambda):= \left\{ X\in U_Q\cap\om: \, \sigma\big(F_{car}(X,Q,\lambda)\big)\geq\theta\sigma(Q)\right\}.
\end{equation}
\begin{remark}\label{remark3.5}
Our goal is to prove that, given $\lambda \in (0,1)$ and $\theta\in (0,1]$,
there are
positive constants $\eta$ and $C$, depending on $\theta,\lambda$, and the 
allowable parameters, such that
for each  $Q\in\dd(\pom)$, and for each $ X\in T_Q(\theta,\lambda)$, there is a chord-arc domain 
$\Omega_X$, constructed as a union $\cup_kI_k^*$ of fattened Whitney boxes $I_k^*$, 
with uniformly controlled chord-arc constants, such that 
$$U^i_Q\subset\Omega_X\subset \Omega \cap B\big(X,C\delta(X)\big)\,,$$ 
where $U_Q^i$ is the particular connected component of $U_Q$ containing $X$, and
\begin{equation}\label{eq3.1a}
\sigma(\pom_X\cap Q) \geq \eta \sigma(Q)\,.
\end{equation}   
For some $Q\in \dd(\pom)$, 
it may be that $T_Q$ is empty.  On the other hand,
by the preceding discussion, each $X\in \om$ belongs to $T_Q(\theta_1,\lambda_1)$ for suitable
$Q,\theta_1$ and $\lambda_1$, so that 
 \eqref{eq3.1a} (with $\theta=\theta_1, \lambda=\lambda_1$) implies 
\begin{equation*}
\sigma(\pom_X\cap \Delta_X) \geq \eta_1 \sigma(\Delta_X)\,,
\end{equation*}
with $\eta_1\approx \eta$, where $Q$ is the particular $Q_i$ selected 
in the previous paragraph.  Moreover, since $X\in T_Q\subset U_Q$, we can modify $\Omega_X$
if necessary, by adjoining to it one or more fattened Whitney boxes $I^*$ with $\ell(I) \approx \ell(Q)$, to ensure that
for the modified $\om_X$, we have in addition that
$\dist(X,\pom_X) \gtrsim \ell(Q) \approx \delta(X)$, and therefore
$\om_X$ verifies all the conditions in Definition \ref{def1.ibpcad}.
\end{remark}

The rest of this section is therefore devoted to proving that there exists, for a given $Q$ and for each
$X\in T_Q(\theta,\lambda)$, a chord-arc domain $\Omega_X$ satisfying the stated properties
(when the set $T_Q(\theta,\lambda)$ is not vacuous).   To this end,
we let $\lambda \in(0,1)$
(by Remark \ref{remark3.5}, any fixed $\lambda \leq \lambda_1$ will suffice).
We also fix  positive numbers 
$K\gg \lambda^{-4}$, and $\eta \leq K^{-4/3}\ll \lambda^4$,
and for these values of $\eta$ and $K$, 
we make
the bilateral Corona decomposition of Lemma \ref{lemma2.1}, so that  $\dd(\pom)=\G\cup\B$. 
We also construct the Whitney collections $\W^0_Q$ in \eqref{eq3.1}, and $\W_Q^*$ of Lemma \ref{lemma2.7}
for this same choice of $\eta$ and $K$.


Given a cube $Q\in \dd(\pom)$, we set
\begin{equation}\label{eq5.8aa}
\dd_*(Q):=\left\{Q'\subset Q:\, \ell(Q)/4\leq \ell(Q')\leq \ell(Q)
\right\}\,.
\end{equation}
Thus, $\dd_*(Q)$ consists of the cube $Q$ itself, along with
its dyadic children and grandchildren.
Let
$$\M:=\{Q(\sbf)\}_{\sbf}$$ denote the collection of   
cubes which are the maximal elements of the stopping time regimes in $\G$.
We define
\begin{equation}\label{eq4.0}
\alpha_Q:= 
\begin{cases} \sigma(Q)\,,&{\rm if}\,\,  (\M\cup\B)\cap \dd_*(Q)\neq \emptyset, \\
0\,,& {\rm otherwise}.\end{cases}
\end{equation}
Given  any collection $\dd'\subset\dd(\pom)$, we set
\begin{equation}\label{eq4.1}
\mut(\dd'):= \sum_{Q\in\dd'}\alpha_{Q}.
\end{equation}
Then $\mut$ is a discrete Carleson measure, i.e.,
recalling that $\dd_Q$ is the discrete Carleson region
relative to $Q$
defined in \eqref{eq3.4a}, we claim that there is a uniform constant $C$ such that
\begin{equation}\label{eq6.2}
\mut(\dd_{Q})\, =\sum_{Q'\subset Q}\alpha_{Q'} \leq\, C\sigma(Q)\,,\qquad \forall\,Q\in \dd(\pom)\,.
\end{equation}
Indeed, note that
for any $Q'\in\dd_Q$, there are at most 3 cubes $Q$ such that 
$Q'\in \dd_*(Q)$ (namely, $Q'$ itself, its dyadic parent, and its dyadic grandparent),
and that by ADR,
$\sigma(Q)\approx \sigma(Q')$, if $Q'\in\dd_*(Q)$.   Thus,
given any
$Q_0\in\dd(\pom)$,
\begin{multline*}
\mut(\dd_{Q_0})\, =\sum_{Q\subset Q_0}\alpha_Q\,
\leq \sum_{Q'\in \M\cup\B}\,\sum_{Q\subset Q_0:\, Q'\in \dd_*(Q)}\sigma(Q) \\[4pt]
\lesssim\sum_{Q'\in \M\cup\B:\, Q' \subset Q_0}\sigma(Q')\,\leq\,C \sigma(Q_0)\,,
\end{multline*}
by Lemma \ref{lemma2.1} (2).  Here, and throughout the remainder of this section, a
generic constant $C$, and implicit constants, are allowed to depend upon the choice of the parameters
$\eta$ and $K$ that we have fixed, along with the usual allowable parameters.

With \eqref{eq6.2} in hand, we therefore have
\begin{equation}\label{eq4.7a}
M_0:= 
\sup_{Q\in\dd(E)}\frac{\mut(\dd_Q)}{\sigma(Q)}\leq C<\infty\,.
\end{equation}

As mentioned above, our proof will be based on a 
two parameter induction scheme.  Given $\lambda\in (0,\lambda_1]$ fixed as above, we 
recall that the set $F_{car}(X,Q,\lambda)$
is defined in \eqref{eqdefFcarQ}.   The induction hypothesis, which we formulate for any $a\geq 0$, and any $\theta\in (0,1]$ is as follows:
\\[.3cm]
\null\hskip.1cm \fbox{\rule[8pt]{0pt}{0pt}$H[a,\theta]$}\hskip4pt \fbox{\ \parbox[c]{.75\textwidth}{%
%
\rule[10pt]{0pt}{0pt}\it  
There is a positive constant $c_{a}=c_a(\theta)<1$ 
such that for any given $Q\in\dd(\pom)$,  if
\begin{equation}\label{eq3.12}
\!\!\!\!\!\!\!\!\!\!\!\!\!\!\!\!\!\!\!\!\!\!\!\!\!\!\!\!\!\!\!\!\!\!\!\!\!\!\mut(\dd_Q)\le \, a\sigma(Q),
\end{equation} 
and if there is a subset $V_Q\subset U_Q\cap\om$ for which 
\begin{equation}\label{eq3.10}
\!\!\!\!\!\!\!\!\!\!\!\!\!\!\!\!\!\!\!\!\!\!\!\!\!\!\!\!\!\!\!\!\!\!\!\!\!
\sigma\left(\bigcup_{X\in V_Q}F_{car}(X,Q,\lambda) \right) \, \geq\, \theta \sigma(Q)\,,
\end{equation}
then there is a subset $V^*_Q\subset V_Q$, 
such that for each connected component $U_Q^i$ of $U_Q$ 
which meets $V^*_Q$, there is a  chord-arc domain $\Omega_Q^i$ 
which is the interior of the union of a collection of fattened Whitney cubes $I^*$, and whose chord-arc 
constants depend only on dimension, $\lambda$, $a$, $\theta$, and the ADR constants for $\Omega$.
Moreover, 
$U_Q^i\subset \Omega^i_Q\subset B_Q^*\cap\Omega=B(x_Q,K\ell(Q))\cap\Omega$,
and
$\sum_i \sigma (\pom^i_{Q}\cap Q)\geq  c_a\sigma(Q)$,
where the sum runs over those $i$ such that $U_Q^i$ meets $V^*_Q$.
}\ }

\medskip

Let us briefly sketch the strategy of the proof.  We first fix $\theta=1$, and by induction on
$a$, establish $H[M_0,1]$.   We then show that there is a fixed $\zeta\in (0,1)$ such that
$H[M_0,\theta]$ implies $H[M_0,\zeta\theta]$, for every $\theta\in(0,1]$.   Iterating,
we then obtain $H[M_0,\theta_1]$ for any $\theta_1\in (0,1]$.  Now, by \eqref{eq4.7a},
we have \eqref{eq3.12} with $a=M_0$, for every $Q\in\dd(\pom)$.  Thus, $H[M_0,\theta_1]$ may be applied in
every cube $Q$ such that $T_Q(\theta_1,\lambda)$ (see \eqref{eqdefTQ}) is non-empty, with $V_Q=\{X\}$,
for any $X\in T_Q(\theta_1,\lambda)$.  
For $\lambda\leq \lambda_1$, and an appropriate choice
of $\theta_1$, by Remark \ref{remark3.5}, we obtain the existence of a chord-arc domain $\Omega_X$ 
verifying the conditions of Definition \ref{def1.ibpcad}, and thus that
Theorem \ref{t1} holds, as desired.

We begin with some preliminary observations.  In what follows we have fixed 
$\lambda \in(0,\lambda_1]$ and two positive numbers 
$K\gg \lambda^{-4}$, and $\eta \leq K^{-4/3}\ll \lambda^4$, for which the bilateral Corona decomposition of $\dd(\pom)$ in Lemma \ref{lemma2.1} is applied. We now fix $k_0\in\mathbb{N}$, $k_0\ge 4$,  such that
\begin{equation}\label{eq3.15}
2^{-k_0} \,\leq\, \frac{\eta}{K}\, < \,2^{-k_0+1}\,.
\end{equation}

\begin{lemma}\label{lemma3.15}  Let $Q\in\dd(\pom)$, and suppose that $Q'\subset Q$, with
$\ell(Q')\leq 2^{-k_0}\ell(Q)$.  Suppose that there are points $X\in U_Q\cap\om$ and $y\in Q'$, that are connected by a $\lambda$-carrot
path $\gamma=\gamma(y,X)$ in $\om$.   Then $\gamma$ meets $U_{Q'}\cap\Omega$. 
\end{lemma}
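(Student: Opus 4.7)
The plan is to locate a point $Z\in\gamma$ at which $\delta(Z)$ equals exactly $\ell(Q')$, and then to verify that some Whitney cube containing $Z$ already belongs to $\W^0_{Q'}\subset \W_{Q'}$; this puts $Z$ in $U_{Q'}\cap\om$ and therefore $\gamma$ meets $U_{Q'}\cap\om$, as required.

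First I will compare scales at the two endpoints of $\gamma$. Since $X\in U_Q\cap\om$, the inclusion \eqref{dist:UQ-pom} gives $\delta(X)\gtrsim \eta^{1/2}\ell(Q)$, while the hypothesis $\ell(Q')\le 2^{-k_0}\ell(Q)$ together with \eqref{eq3.15} yields $\ell(Q)\ge (K/\eta)\ell(Q')$. Combining these,
\[
\delta(X) \,\gtrsim\, K\eta^{-1/2}\,\ell(Q') \,\gg\, \ell(Q'),
\]
by the standing choices $K\gg\lambda^{-4}$ and $\eta\le K^{-4/3}$. Since $\delta(y)=0$ and $\delta\circ\gamma$ is continuous, the intermediate value theorem produces $Z\in\gamma$ with $\delta(Z)=\ell(Q')$. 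In particular $Z\ne y$, so $Z\in\gamma\setminus\{y\}\subset\om$.

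Next I will certify that $Z\in U_{Q'}$. Writing $r:=\ell(Q')$, the $\lambda$-carrot property \eqref{eq1.carrot} gives
\[
|Z-y| \,\le\, \ell\big(\gamma(y,Z)\big) \,\le\, \delta(Z)/\lambda \,=\, r/\lambda.
\]
Let $I\in\W$ be any closed Whitney cube containing $Z$. By \eqref{Whintey-4I}, $\ell(I)\approx \delta(Z)=r$, and (since $y\in Q'$)
\[
\dist(I,Q') \,\le\, |Z-y|+\diam(I) \,\lesssim\, r/\lambda.
\]
I will then check that $I$ satisfies the three defining constraints of $\W^0_{Q'}$ in \eqref{eq3.1}, namely
$\eta^{1/4}\ell(Q')\le \ell(I)\le K^{1/2}\ell(Q')$ and $\dist(I,Q')\le K^{1/2}\ell(Q')$: the side-length bounds hold because the universal Whitney constants are squeezed between $\eta^{1/4}\ll 1$ and $K^{1/2}\gg 1$, and the distance bound holds because $K^{1/2}\gg \lambda^{-2}\ge \lambda^{-1}$ by the set-up. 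Consequently $I\in\W^0_{Q'}\subset\W_{Q'}$ by \eqref{Wdef}, and since $Z\in I\subset\textup{int}\bigl(I^*(\tau)\bigr)$, I obtain $Z\in U_{Q'}$, completing the proof.

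The only delicate point is precisely the numerology in the last step: the quantities $\eta^{1/4}$, $K^{1/2}$, and $\lambda^{-1}$ must sit in the correct order so that the Whitney cube extracted from the carrot condition fits inside the tolerances of $\W^0_{Q'}$. This is exactly what the calibrations $K\gg\lambda^{-4}$, $\eta\le K^{-4/3}$, and $2^{-k_0}\le \eta/K$ in the set-up are designed to guarantee, so no genuine obstacle arises; the lemma should be purely a scale-matching exercise.
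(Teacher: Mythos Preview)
Your proof is correct and follows essentially the same strategy as the paper: locate a point $Z$ on $\gamma$ at scale $\approx\ell(Q')$ and verify that the Whitney cube through $Z$ lies in $\W^0_{Q'}$. The only difference is in how $Z$ is selected: the paper chooses $Z$ in the annulus $B(y,2\ell(Q'))\setminus B(y,\ell(Q'))$ (controlling $|Z-y|$ directly and then bounding $\delta(Z)$ above and below via the carrot condition), whereas you use the intermediate value theorem to fix $\delta(Z)=\ell(Q')$ exactly and then bound $|Z-y|\le\delta(Z)/\lambda$ via the carrot condition; both variants plug into the same numerology and the rest is identical.
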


\begin{proof}  
By construction (see \eqref{eq3.1}, Lemma \ref{lemma2.7}, \eqref{Wdef} and \eqref{eq3.3bb}), 
$X\in U_Q$ implies that
$$\eta^{1/2}\ell(Q) \lesssim \delta(X) \lesssim K^{1/2}\ell(Q)\,.$$  Since $2^{-k_0}\ll\eta$, 
and $\ell(Q')\leq 2^{-k_0}\ell(Q)$, we then have that
$X\in \om\setminus B\big(y,2\ell(Q')\big)$, so $\gamma(y,X)$ meets
$B\big(y,2\ell(Q')\big)\setminus B\big(y,\ell(Q')\big)$, say at a point $Z$.  Since
$\gamma(y,X)$ is a $\lambda$-carrot path, and since we have previously
specified that $\eta \ll\lambda^4$,
$$\delta(Z) \geq \lambda \ell\big(\gamma(y,Z)\big) \geq \lambda|y-Z|\geq\lambda \ell(Q')\gg 
\eta^{1/4}\ell(Q')\,.$$  On the other hand
$$\delta(Z)\leq \dist(Z,Q') \leq |Z-y| \leq 2\ell(Q') \ll K^{1/2} \ell(Q')\,.$$
In particular then, the Whitney box $I$ containing $Z$ must belong to $\W^0_{Q'}$
(see \eqref{eq3.1}), so $Z\in U_{Q'}$.  Note that $Z\in\Omega$ since $\gamma\subset\Omega$.
\end{proof}

We shall also require the following.   We recall 
that by Lemma \ref{lemma2.7}, for $Q\in \sbf$, the Whitney region $U_Q$
has the splitting $U_Q = U_Q^+\cup U_Q^-$, with $U_Q^+$ (resp. $U_Q^-$) lying
above (resp., below) the Lipschitz graph $\Gamma_\sbf$ of Lemma \ref{lemma2.1}.

\begin{lemma}\label{lemma3.37}  Let $Q'\subset Q$, 
and suppose that $Q'$ and $Q$ both belong to $\G$, and moreover
that both $Q'$ and $Q$ belong to the same 
stopping time regime $\sbf$.
Suppose that $y\in Q'$ and $X\in U_Q\cap\om$ are connected via a $\lambda$-carrot path
$\gamma(y,X)$ in $\om$,
and assume that there is a point $Z\in \gamma(y,X) \cap U_{Q'}\cap\Omega$ 
(by Lemma \ref{lemma3.15} we know that such a $Z$ exists provided $\ell(Q')\leq 2^{-k_0} \ell(Q)$).
Then $X\in U_Q^+$ if and only if $Z\in U_{Q'}^+$ 
(thus, $X\in U_Q^-$  if and only if $Z\in U_{Q'}^-$).
\end{lemma}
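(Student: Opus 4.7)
The plan is to argue by contradiction. Suppose, say, that $X \in U_Q^+$ while $Z \in U_{Q'}^-$, so these points lie on opposite sides of the Lipschitz graph $\Gamma_\sbf$. By \eqref{eq2.whitney2} both $X$ and $Z$ sit at positive distance from $\Gamma_\sbf$, so the subpath of $\gamma$ joining $Z$ to $X$ (which is contained in $\Omega$) must cross $\Gamma_\sbf$ at some interior point $W \in \Omega$. The symmetric case $X \in U_Q^-$, $Z \in U_{Q'}^+$ is handled identically, so I only treat the above configuration.

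The $\lambda$-carrot condition applied at $W$, combined with the fact that $W$ lies strictly past $Z$ on the path (parametrizing from $y$), supplies two lower bounds on $\delta(W)$: first, $\delta(W) \geq \lambda\,\ell(\gamma(y,W)) \geq \lambda|y - W|$, and second, $\delta(W) \geq \lambda\,\ell(\gamma(y,Z)) \geq \lambda\,\delta(Z) \gtrsim \lambda\eta^{1/2}\ell(Q')$, where the last estimate uses \eqref{dist:UQ-pom} applied to $Z \in U_{Q'}$.

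The next step is to locate a cube in $\sbf$ whose ``fat'' ball $B^{*}$ captures $W$, so that the bilateral estimate \eqref{eq2.2a} may be invoked. A short computation using $\ell(\gamma(y,X)) \leq \delta(X)/\lambda \lesssim K^{1/2}\ell(Q)/\lambda$ together with $K \gg \lambda^{-4}$ confirms that $W \in B_Q^*$. Consider now the chain of ancestors $Q' = Q_0 \subset Q_1 \subset \cdots \subset Q_N = Q$ with $\ell(Q_j) = 2^j\ell(Q')$; each $Q_j$ belongs to $\sbf$ by coherence (Definition \ref{d3.11}(b)). Let $Q'' := Q_{j_*}$ be the smallest cube on the chain for which $W \in B_{Q''}^*$. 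Lemma \ref{lemma2.1}(3) then yields $\delta(W) < \eta\,\ell(Q'')$.

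The contradiction now comes in two subcases. If $j_* = 0$, the upper bound $\delta(W) < \eta\,\ell(Q')$ and the second lower bound $\delta(W) \gtrsim \lambda\eta^{1/2}\ell(Q')$ force $\lambda^2 \lesssim \eta$, contradicting $\eta \ll \lambda^4$. If $j_* \geq 1$, minimality forces $W \notin B_{Q_{j_*-1}}^*$, i.e.\ $|W - x_{Q_{j_*-1}}| > K\,\ell(Q_{j_*-1})$; since $y \in Q_{j_*-1}$ with $|y - x_{Q_{j_*-1}}| \lesssim \ell(Q_{j_*-1})$, this gives $|y-W| \gtrsim K\,\ell(Q'')$, whence $\delta(W) \geq \lambda|y-W| \gtrsim \lambda K\,\ell(Q'')$, contradicting $\delta(W) < \eta\,\ell(Q'')$ since $K \gg \lambda^{-4} \gg \eta/\lambda$. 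The main technical obstacle is the careful parameter balancing in the two subcases (and in verifying that $W \in B_Q^*$, so that $Q''$ exists inside $Q$), which is precisely where the standing hypotheses $\eta \leq K^{-4/3} \ll \lambda^4$ and $K \gg \lambda^{-4}$ enter in an essential way.
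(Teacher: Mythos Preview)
Your proof is correct and follows essentially the same approach as the paper's: argue by contradiction, locate a crossing point of $\Gamma_{\sbf}$ on the subpath between $Z$ and $X$, select the smallest cube in the chain from $Q'$ to $Q$ whose $B^*$-ball captures this point, and derive incompatible upper and lower bounds on its distance to the boundary via \eqref{eq2.2a} and the carrot condition. The only cosmetic difference is that the paper first disposes of the case $W\in B_{Q'}^*$ separately (so that the minimal index is automatically at least $1$), whereas you fold that into the $j_*=0$ subcase of the chain argument.
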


\begin{proof}
We suppose for the sake of contradiction that, e.g., $X\in U_Q^+$, and that $Z\in U_{Q'}^-$.
Thus, in traveling from $y$ to $Z$ and then to $X$ along the path
$\gamma(y,X)$, one must cross the Lipschitz graph $\Gamma_\sbf$ at least once between
$Z$ and $X$.
Let $Y_1$ be the first point on $\gamma(y,X)\cap\Gamma_\sbf$ that one 
encounters {\it after} $Z$, when traveling toward $X$.
By Lemma \ref{lemma2.7},
\begin{equation*}
K^{1/2}\ell(Q) \gtrsim \delta(X) \geq \lambda \ell\big(\gamma(y,X)\big)
\gg K^{-1/4} \ell\big(\gamma(y,X)\big)\,,
\end{equation*}
where we recall that we have fixed $K\gg \lambda^{-4}$.
Consequently, $\ell\big(\gamma(y,X)\big) \ll K^{3/4} \ell(Q)$, so in particular,
$\gamma(y,X)\subset B_Q^*:= B\big(x_Q,K\ell(Q)\big)$, as in Lemma \ref{lemma2.1}.
On the other hand, $Y_1\notin B^*_{Q'}$.  Indeed,
$Y_1\in\Gamma_S$, so if $Y_1 \in B^*_{Q'}$, then by
\eqref{eq2.2a},  $\delta(Y_1) \leq \eta \ell(Q')$.  However,
$$\delta(Y_1) \geq \lambda \ell\big(\gamma(y,Y_1)\big)
\geq \lambda \ell\big(\gamma(y,Z)\big) \geq \lambda |y-Z| 
\geq \lambda \dist(Z,Q')  \gtrsim \lambda \eta^{1/2} \ell(Q')\,,$$
where in the last step we have used Lemma \ref{lemma2.7}.  This contradicts
our choice of $\eta \ll \lambda^4$.

We now form a chain of consecutive dyadic cubes $\{P_i\}\subset \dd_Q$, 
connecting $Q'$ to $Q$, i.e.,
$$ 
Q'
=P_0 \lhd P_1 \lhd P_2\lhd \dots \lhd P_M \lhd P_{M+1}=Q\,,$$
where the introduced notation $P_i\lhd P_{i+1}$ means that $P_i$ is the 
dyadic child of $P_{i+1}$, that is,   $P_i\subset P_{i+1}$ and $\ell(P_{i+1})=2\ell(P_i)$.  Let $P:=P_{i_0}$, $1\le i_0\le M+1$, be the smallest of the cubes $P_i$ such that 
$Y_1 \in B_{P_i}^*$.    Setting $P':= P_{i_0-1}$, we then have that
$Y_1 \in B_P^*$, and $Y_1\notin B_{P'}^*$.   By the coherency of $\sbf$, it follows that $P\in \sbf$, so
by \eqref{eq2.2a},
\begin{equation}\label{eq3.38}
\delta(Y_1) \leq \eta\ell(P)\,.
\end{equation}  
On the other hand, 
$$\dist(Y_1,P') \gtrsim K\ell(P') \approx K \ell(P)\,,$$ and therefore, since
$y\in Q' \subset P'$,
\begin{equation}\label{eq3.39}
\delta(Y_1)\geq \lambda \ell\big(\gamma(y,Y_1)\big)
\geq \lambda|y-Y_1| \geq \lambda\dist(Y_1,P') \gtrsim
\lambda K\ell(P)\,.
\end{equation}
Combining \eqref{eq3.38} and \eqref{eq3.39}, we see that
$\lambda  \lesssim \eta/K$, which contradicts that we have fixed
$\eta \ll \lambda^4$, and $K\gg \lambda^{-4}$.
\end{proof}

\begin{lemma}\label{lemma:VQ}
Fix $\lambda\in(0,1)$. Given $Q\in\dd(\pom)$ and a non-empty set 
$V_Q\subset U_Q\cap \Omega$, such that each $ X \in V_Q$ may be 
connected by a $\lambda$-carrot path to some $y\in Q$, set
\begin{equation}\label{defi-FQ}
F_Q:= \bigcup_{X\in V_Q} F_{car}(X,Q,\lambda)\,,
\end{equation}
where  we recall that $F_{car}(X,Q,\lambda)$ is the set of $y\in Q$ that are connected via a $\lambda$-carrot path to $X$ (see \eqref{eqdefFcarQ}).  
Let $Q'\subset Q$ be such that $\ell(Q')\leq 2^{-k_0} \ell(Q)$ and $F_Q\cap Q'\neq\emptyset$. Then, there exists a non-empty set $V_{Q'}\subset U_{Q'}\cap \Omega$ such that if we 
define $F_{Q'}$ as in \eqref{defi-FQ} with $Q'$ 
replacing $Q$, then $ F_Q\cap Q'\subset  F_{Q'}$. Moreover, for 
every $Y\in V_{Q'}$, there exist $X\in V_Q$, $y\in Q'$  (indeed $y\in F_Q\cap Q'$)  and 
a $\lambda$-carrot path $\gamma=\gamma(y,X)$ such that $Y\in\gamma$.

\end{lemma}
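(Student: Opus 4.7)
The plan is to build $V_{Q'}$ directly from the carrot paths that witness membership in $F_Q\cap Q'$, using Lemma \ref{lemma3.15} to transfer them down to scale $\ell(Q')$.

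First, for every $y\in F_Q\cap Q'$, the definition of $F_Q$ supplies a point $X(y)\in V_Q$ and a $\lambda$-carrot path $\gamma_y=\gamma(y,X(y))$ in $\Omega$. Since $X(y)\in V_Q\subset U_Q\cap\Omega$ and $\ell(Q')\le 2^{-k_0}\ell(Q)$, Lemma \ref{lemma3.15} applies and yields a point
\[
Z(y)\in\gamma_y\cap U_{Q'}\cap\Omega\,.
\]
I would then simply define
\[
V_{Q'}:=\{\,Z(y)\,:\,y\in F_Q\cap Q'\,\}\subset U_{Q'}\cap\Omega,
\]
which is non-empty because $F_Q\cap Q'$ is non-empty by assumption.

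The second conclusion of the lemma is built into this construction: every $Y\in V_{Q'}$ is, by definition, of the form $Z(y)$ for some $y\in F_Q\cap Q'$, and hence belongs to the $\lambda$-carrot path $\gamma_y=\gamma(y,X(y))$ with $X(y)\in V_Q$. For the first conclusion, the key observation is that the sub-path $\gamma_y(y,Z(y))$ obtained by restricting $\gamma_y$ between its endpoint $y$ and the intermediate point $Z(y)$ remains a $\lambda$-carrot path: it is connected, rectifiable, its interior lies in $\Omega$, and for every $W\in\gamma_y(y,Z(y))\subset\gamma_y$ the carrot inequality
\[
\lambda\,\ell\!\bigl(\gamma_y(y,W)\bigr)\le\delta(W)
\]
is inherited verbatim from $\gamma_y$. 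Since $y\in Q'$ and $Z(y)\in U_{Q'}\cap\Omega$, this forces $y\in F_{car}(Z(y),Q',\lambda)\subset F_{Q'}$, and since $y$ was an arbitrary element of $F_Q\cap Q'$, the inclusion $F_Q\cap Q'\subset F_{Q'}$ follows.

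There is essentially no obstacle here: the only non-trivial ingredient is Lemma \ref{lemma3.15}, which guarantees that a $\lambda$-carrot path emanating from the much smaller cube $Q'$ towards a Whitney point at scale $\ell(Q)$ must pass through $U_{Q'}\cap\Omega$; everything else is a matter of recording the selected paths and exploiting the self-similarity of the carrot condition under restriction to a sub-path ending at the boundary endpoint $y$.
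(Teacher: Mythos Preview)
Your proof is correct and follows essentially the same approach as the paper's own argument: both pick, for each $y\in F_Q\cap Q'$, a witnessing carrot path to some $X\in V_Q$, apply Lemma \ref{lemma3.15} to obtain an intermediate point in $U_{Q'}\cap\Omega$, and define $V_{Q'}$ as the collection of these points, noting that the sub-path from $y$ to the intermediate point is still a $\lambda$-carrot path.
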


\begin{proof}
For every $y\in F_Q\cap Q'$, by definition of $F_Q$, there 
exist $X\in V_Q$ and a $\lambda$-carrot path $\gamma=\gamma(y,X)$. 
By Lemma \ref{lemma3.15},
there is $Y=Y(y)\in\gamma\cap U_{Q'}\cap\Omega$ (there can be more than one $Y$, but we just pick one). Note that the sub-path $\gamma(y,Y)\subset \gamma(y,X)$ is also a $\lambda$-carrot path, for the same constant $\lambda$. All the conclusions in the lemma follow easily from the construction  by letting $V_{Q'}=\bigcup_{y\in F_Q\cap Q'} Y(y)$. 
\end{proof}

\begin{remark}\label{remark-VQ-children}
It follows easily from the previous proof that under the same assumptions, if one further assumes that $\ell(Q')<2^{-k_0}\,\ell(Q)$, we can then repeat the argument with both $Q'$ and $(Q')^*$ (the dyadic parent of $Q'$) to obtain respectively $V_{Q'}$ and $V_{(Q')^*}$. Moreover, this can be done in such a way that every point in $V_{Q'}$ (resp. $V_{(Q')^*}$) belongs to a $\lambda$-carrot path which also meets $V_{(Q')^*}$ (resp. $V_{Q'}$), connecting $U_Q$ and $Q'$.
\end{remark}

Given a family $\F:=\{Q_j\}\subset \dd(\pom)$ of
pairwise disjoint cubes, we recall that the ``discrete  sawtooth" $\dd_\F$ is 
the collection of all cubes in $\dd(\pom)$ that are not contained in any $Q_j\in\F$ (see \eqref{eq2.discretesawtooth1}),
and we define the restriction of $\mut$ (cf. \eqref{eq4.0}, \eqref{eq4.1}) to the sawtooth $\dd_\F$ by
\begin{equation}\label{eq4.4x}
\mut_\F(\dd'):=\mut(\dd'\cap\dd_\F)= \sum_{Q\in\dd'\setminus (\cup_{\F} \,\dd_{Q_j})}\alpha_{Q}.
\end{equation}
We then set 
$$\|\mut_\F\|_{\C(Q)}:= \sup_{Q'\subset Q}\frac{\mut_\F(\dd_{Q'})}{\sigma(Q')}.$$
Let us note that we may allow $\F$ to be empty, in which case $\dd_\F=\dd$ and $\mut_\F$ 
is simply $\mut$. We note that  the following claims  remain true when $\F$ is empty,
with some straightforward changes that are left to the interested reader. 


\begin{claim}\label{claim3.16} Given $Q\in\dd(\pom)$, and 
a family $\F=\F_Q:=\{Q_j\}\subset \dd_Q\setminus \{Q\}$ of
pairwise disjoint sub-cubes of $Q$, 
if $\|\mut_\F\|_{\C(Q)} \leq 1/2$, then each $Q' \in \dd_{\F}\cap \dd_Q$, each $Q_j\in\F$, 
and every dyadic child $Q_j'$ of any $Q_j\in \F$, belong to the 
good collection $\G$, and moreover, every such cube belongs to the {\bf same} stopping time regime
$\sbf$.  In particular, $\sbf' :=\dd_\F\cap\dd_Q$ is a semi-coherent subregime of $\sbf$, and so is
$\sbf'':= (\dd_\F\cup\F\cup\F')\cap\dd_Q$, where $\F'$ denotes the collection of all
dyadic children of cubes in $\F$.
\end{claim}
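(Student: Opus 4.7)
The argument is driven by a single mechanism: the Carleson hypothesis $\|\mut_\F\|_{\C(Q)}\le 1/2$ forces, for every $P\in\dd_\F\cap\dd_Q$, the bound $\mut_\F(\dd_P)\le\sigma(P)/2$. Combined with the definition of $\alpha_P$ in \eqref{eq4.0} and the fact that $\dd_*(P)$ consists of $P$ together with its dyadic children and grandchildren, this means that no cube of $\M\cup\B$ may appear as, or as a child or grandchild of, any cube in $\dd_\F\cap\dd_Q$; otherwise $\alpha_P=\sigma(P)$ for some $P\in\dd_\F\cap\dd_Q$ and the bound would be violated by a factor of at least $2$. Each assertion in the claim is then a short case analysis around this principle.

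The plan is to proceed in three steps. First, if some $Q'\in\dd_\F\cap\dd_Q$ lay in $\B$, then $\alpha_{Q'}=\sigma(Q')$ immediately, a contradiction; hence $\dd_\F\cap\dd_Q\subset\G$, and in particular $Q\in\sbf$ for some stopping time regime $\sbf\subset\G$. Second, to show that all of $\dd_\F\cap\dd_Q$ lies in this same $\sbf$, I would pick $P\in\dd_\F\cap\dd_Q$ hypothetically in a different regime and look at the dyadic chain of ancestors $P=P_0\subsetneq\dots\subsetneq P_M=Q$; each $P_i\in\dd_\F$ because an ancestor of $P$ contained in some $Q_k\in\F$ would drag $P$ into $Q_k$ as well. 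Letting $i_0$ be the smallest index with $P_{i_0}\in\sbf$, coherency (b) of Definition \ref{d3.11} inside the regime $\widetilde\sbf$ of $P_{i_0-1}$ forces $P_{i_0-1}$ to be the maximal cube of $\widetilde\sbf$ (the only alternative, namely that the parent $P_{i_0}$ lies in $\widetilde\sbf$, contradicts $P_{i_0}\in\sbf\ne\widetilde\sbf$); thus $P_{i_0-1}\in\M\cap\dd_\F\cap\dd_Q$, again violating the Carleson bound. Third, given any $Q_j\in\F$, its dyadic parent $\widehat Q_j$ lies in $\dd_\F\cap\dd_Q$ (by pairwise disjointness of $\F$), and hence in $\sbf$ by Step~2. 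If $Q_j\in\M\cup\B$, then $Q_j\in\dd_*(\widehat Q_j)$ gives $\alpha_{\widehat Q_j}=\sigma(\widehat Q_j)$ and a contradiction, so $Q_j\in\G\setminus\M$; coherency (b) applied to the regime of $Q_j$ then matches that regime with the one containing $\widehat Q_j$, namely $\sbf$. For a child $Q_j'$ of $Q_j$, the same reasoning shows that if $Q_j'\notin\sbf$ then $Q_j'\in\M\cup\B$, and since $Q_j'\in\dd_*(\widehat Q_j)$, another appeal to the Carleson bound gives a contradiction; hence by coherency (c) all children of $Q_j$ lie in $\sbf$.

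The semi-coherency of $\sbf'$ and $\sbf''$ is then immediate: in each case the maximal element is $Q$, and condition (b) of Definition \ref{d3.11} reduces to the observation that if $P$ lies in the subregime and $P\subset\widetilde Q\subset Q$, then $\widetilde Q\in\dd_Q$, while $\widetilde Q\subset Q_k\in\F$ would force $P\subset Q_k$, contradicting membership of $P$ in whichever of $\dd_\F,\F,\F'$ it belongs to. The one nontrivial ingredient is the coherency analysis in Step~2 that manufactures a cube of $\M$ out of a putative transition between stopping time regimes; everything else is bookkeeping against the Carleson condition.
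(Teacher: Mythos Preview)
Your argument is correct and follows essentially the same route as the paper's proof: both hinge on the observation that $\|\mut_\F\|_{\C(Q)}\le 1/2$ forbids any cube in $(\dd_\F\cap\dd_Q)\cup\F\cup\F'$ from lying in $\M\cup\B$, after which coherency forces all such cubes into the regime $\sbf$ containing $Q$. The paper compresses your Steps~1--3 into a single paragraph and leaves the ``same regime'' deduction implicit, whereas you spell out the chain argument; your version is more detailed but not genuinely different. One small imprecision: in your semi-coherency check for $\sbf''$, when $P\in\F$ (say $P=Q_j$) and $\widetilde Q\subset Q_k$, you do not get a contradiction with $P\in\F$ but rather $j=k$ by pairwise disjointness, whence $\widetilde Q=Q_j\in\F\subset\sbf''$; similarly for $P\in\F'$. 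The conclusion stands, but the phrasing ``contradicting membership'' is not quite accurate in those cases.
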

Indeed, if any $Q' \in \dd_{\F}\cap \dd_Q$ were in $\M\cup\B$ (recall that
$\M:=\{Q(\sbf)\}_{\sbf}$ is the collection of 
cubes which are the maximal elements of the stopping time regimes in $\G$), then
by construction $\alpha_{Q'}=\sigma(Q')$ for that cube (see \eqref{eq4.0}), so by definition of
$\mut$ and $\mut_\F$, we would have 
$$1=\frac{\sigma(Q')}{\sigma(Q')} \leq \frac{\mut_\F(\dd_{Q'})}{\sigma(Q')} \leq 
\|\mut_\F\|_{\C(Q)}\leq \frac12\,,$$
a contradiction.  Similarly, if some $Q_j\in\F$ (respectively, $Q_j'\in \F'$)
were in $\M\cup\B$, then its dyadic parent (respectively, dyadic grandparent)
$Q_j^*$ would belong to $\dd_{\F}\cap \dd_Q$, and by definition $\alpha_{Q_j^*}=\sigma(Q_j^*)$, so again 
we reach a contradiction.  Consequently, $\F \cup \F'\cup
(\dd_\F\cap\dd_Q)$ does not meet $\M\cup\B$, and the claim follows.

For future reference, we now prove the following.  Recall that for $Q\in \G$, $U_Q$ has precisely two
connected components $U_Q^\pm$ in $\ree\setminus \pom$.

\begin{lemma}\label{lemmaCAD} Let  $Q\in\dd(\pom)$, let $k_1$ be such that
$2^{k_1}> 2^{k_0}\gg 100 K$, 
see \eqref{eq3.15}, 
and suppose that there is 
a family $\F=\F_Q:=\{Q_j\}\subset \dd_Q\setminus \{Q\}$ of
pairwise disjoint sub-cubes of $Q$, 
with $\|\mut_\F\|_{\C(Q)} \leq 1/2$ (hence by Claim \ref{claim3.16},  there is some $\sbf$ with
$\sbf \supset (\dd_\F\cup\F\cup\F')\cap\dd_Q$), and  
 a non-empty subcollection
$\F^*\subset \F$, 
such that: 
\begin{itemize}
\item[(i)] $\ell(Q_j) \leq 2^{-k_1}\ell(Q)$, for each cube $Q_j\in \F^*$;
 \smallskip
\item[(ii)] the collection of balls $\big\{\kappa B^*_{Q_j}:= B\big(x_{Q_j},\kappa 
K \ell(Q_j)\big):\, Q_j \in \F^* \big\}$
is pairwise disjoint, where
$\kappa\gg K^4 $ is a sufficiently large positive constant; and
\smallskip
\item[(iii)] $\F^*$ has a disjoint decomposition
$\F^*=\F^*_+ \cup \F^*_-$, where for each $Q_j \in \F_\pm^*$, there is a
chord-arc subdomain $\Omega_{Q_j}^\pm\subset \Omega$, consisting 
of a union of fattened Whitney cubes $I^*$, with
$U_{Q_j}^\pm\subset \Omega_{Q_j}^\pm\subset
B^*_{Q_j}:= B(x_{Q_j},K \ell(Q_j))$, and with uniform control of the chord-arc constants. 
\end{itemize}
Define a semi-coherent subregime $\sbf^*\subset \sbf$ by
\[
	\sbf^*=\left\{Q'\in \dd_Q: Q_j\subset Q' \text{ for some }Q_j\in\F^*\right\}\,,
	\] 
and for each choice of $\pm$ for which $\F^*_\pm$ is non-empty, set
\begin{equation}\label{eqomdef}  \Omega_Q^\pm:= \Omega_{\sbf^*}^\pm \bigcup
 \left(\bigcup_{Q_j \,\in\, \F^*_\pm}\Omega^\pm_{Q_j}\right)
 \end{equation}	
	Then for $\kappa$ large enough, depending only on allowable parameters, 
$\om_Q^\pm$ is a chord-arc domain, with chord arc constants depending only on the 
 uniformly controlled chord-arc constants of $\Omega_{Q_j}^\pm$ and on the other allowable parameters.
 Moreover, $ \Omega_Q^\pm\subset  B_Q^*\cap\Omega=
 B(x_Q,K\ell(Q))\cap\Omega$, 
 and $\Omega_Q^\pm$ is a union of fattened Whitney cubes. 
\end{lemma}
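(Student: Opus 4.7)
The plan is to verify the three claims about $\Omega_Q^\pm$ defined in \eqref{eqomdef}: that it is a union of fattened Whitney cubes, that it is contained in $B_Q^*\cap \Omega$, and that it is a CAD with uniformly controlled constants. The first is immediate, since both $\Omega_{\sbf^*}^\pm$ and each $\Omega^\pm_{Q_j}$ are such unions by \eqref{eq3.2}--\eqref{eq3.3aa} and hypothesis (iii). For containment in $B_Q^*$, any fattened Whitney cube contributing to $\Omega_{\sbf^*}^\pm$ comes from some $U^\pm_{Q'}$ with $Q'\in \sbf^*\subset \dd_Q$, so lies in $B_{Q'}^*\subset B_Q^*$ by \eqref{def:BQ*}, while each $\Omega^\pm_{Q_j}\subset B^*_{Q_j}\subset B_Q^*$ by hypothesis. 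For containment in $\Omega$, note that $U^\pm_{Q_j}\subset \Omega^\pm_{Q_j}\cap \Omega_{\sbf^*}^\pm$ is non-empty and lies in $\Omega$ by (iii); since by Claim \ref{claim3.16} the subregime $\sbf^*$ is contained in a single stopping time regime $\sbf\subset \G$, Lemma \ref{lemma2.7} and Remark \ref{remark2.12} give that $\Omega_{\sbf^*}^\pm$ is Harnack-chain connected in $\ree\setminus\pom$, so it lies entirely in the connected component of $\ree\setminus\pom$ that contains $U^\pm_{Q_j}\subset \Omega$.

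The CAD verification hinges on a local separation statement enabled by the disjointness in (ii) together with the size control in (i). Taking $\kappa \gg K^4$ sufficiently large and using \eqref{eq2.whitney2} and \eqref{Whintey-4I}, I would show that for each $Q_j\in\F^*$, inside $(\kappa/2)B^*_{Q_j}$ the only fattened Whitney cubes contributing to $\Omega_{\sbf^*}^\pm$ are those coming from $U^\pm_{Q'}$ for $Q'\in \sbf^*$ with $\ell(Q')\approx \ell(Q_j)$ (cubes higher in $\sbf^*$ produce Whitney boxes either too large or centered too far from $x_{Q_j}$), and that no fattened Whitney cube from any $\Omega^\pm_{Q_i}$ with $i\neq j$ enters $(\kappa/2) B^*_{Q_j}$, by disjointness of the balls $\kappa B^*_{Q_i}$. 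Consequently, locally inside $(\kappa/2)B^*_{Q_j}$, the set $\Omega_Q^\pm$ differs from $\Omega^\pm_{Q_j}$ (if $Q_j\in \F^*_\pm$) only by adjoining a uniformly bounded number of fattened Whitney cubes of comparable size $\approx \ell(Q_j)$, while outside $\bigcup_{Q_j\in\F^*}(\kappa/2)B^*_{Q_j}$ the set $\Omega_Q^\pm$ coincides with a subregion of $\Omega_{\sbf^*}^\pm$.

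Armed with this local picture, the ADR, interior Corkscrew, exterior Corkscrew, and Harnack chain conditions transfer to $\Omega_Q^\pm$ from the corresponding properties of $\Omega_{\sbf^*}^\pm$ (Lemma \ref{lemma2.7}) and each $\Omega^\pm_{Q_j}$ (hypothesis (iii)). Interior Corkscrew points are inherited from the pertinent local piece; exterior Corkscrews come from $\Omega^\pm_{Q_j}$ at scales $\lesssim K\ell(Q_j)$ near $Q_j$ and from $\Omega_{\sbf^*}^\pm$ otherwise; Harnack chains are built by concatenation through $U^\pm_{Q_j}\subset \Omega^\pm_{Q_j}\cap \Omega_{\sbf^*}^\pm$, whose Harnack connectivity inside the sawtooth follows from Remark \ref{remark2.12}. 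The main obstacle will be formalizing the gluing at scale $\approx \ell(Q_j)$ between the CAD $\Omega^\pm_{Q_j}$ and the fattened Whitney cubes of $\Omega_{\sbf^*}^\pm$ that abut it: verifying that the boundary of the union remains ADR and that its exterior still admits Corkscrew points near $\partial \Omega^\pm_{Q_j}$ after those cubes are adjoined. This is precisely where the strong separation $\kappa\gg K^4$, well beyond the minimal $\kappa \gtrsim K$, provides the buffer needed to prevent interference between neighboring pieces $\Omega^\pm_{Q_i}$ and to keep the local modification quantitatively controlled by the allowable parameters alone.
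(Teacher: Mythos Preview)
Your overall plan matches the paper's: the containment $\Omega_Q^\pm\subset B_Q^*\cap\Omega$, the fact that $\Omega_Q^\pm$ is a union of fattened Whitney cubes, and the Harnack chain argument (concatenating through $U_{Q_j}^\pm\subset \Omega_{Q_j}^\pm\cap\Omega_{\sbf^*}^\pm$) are all essentially what the paper does. Your local separation picture is also broadly correct.

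The genuine gap is exactly the part you flag as the ``main obstacle'': the exterior Corkscrew and lower ADR. Your proposed mechanism --- inherit exterior Corkscrews from $\Omega_{Q_j}^\pm$ at small scales and from $\Omega_{\sbf^*}^\pm$ at large scales --- does not directly work in two situations. First, for $x\in\partial\Omega_{Q_j}^\pm\setminus\partial\Omega_{\sbf^*}^\pm$ at scale $r\gg K\ell(Q_j)$, neither CAD supplies a Corkscrew: the scale is too large for $\Omega_{Q_j}^\pm$, and $x$ need not lie on $\partial\Omega_{\sbf^*}^\pm$. Second, even for $x\in\partial\Omega_{\sbf^*}^\pm$ at a scale $r$ dominating all nearby $\ell(Q_j)$, the exterior Corkscrew region for $\Omega_{\sbf^*}^\pm$ may be substantially filled in by many small $\Omega_{Q_j}^\pm$'s, so you cannot simply quote it.

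The paper does not try to produce exterior Corkscrew points directly. Instead it proves a single \emph{volume} estimate
\[
\big|B(x,r)\setminus\overline{\Omega_Q^\pm}\big|\ge c\,r^{n+1},\qquad x\in\partial\Omega_Q^\pm,\ \ r<\diam\Omega_Q^\pm,
\]
and then recovers lower ADR from the relative isoperimetric inequality and exterior Corkscrews from lower$+$upper ADR plus this volume bound. The volume bound is obtained by a case analysis. The delicate case is when $r>\kappa^{1/2}\ell(Q_{j_0})$ for the largest $Q_{j_0}$ with $\partial\Omega_{Q_{j_0}}^\pm$ meeting $B(x,r/2)$: one first locates a nearby point $x_1\in\partial\Omega_{\sbf^*}^\pm\cap\partial\Omega_Q^\pm$ (this uses connectedness of $\Omega_{\sbf^*}^\pm$ and the fact that $U_Q^\pm$ is far from $x$), applies the exterior Corkscrew of $\Omega_{\sbf^*}^\pm$ at $x_1$, and then shows that the balls $B_{Q_j}^*$ cannot absorb more than half of that exterior volume. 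If they did, one uses the separation (ii) with $\kappa\gg K^4$ to find, for each such $j$, an ample piece of $(\kappa^{1/4}B_{Q_j}^*\setminus B_{Q_j}^*)\setminus\overline{\Omega_{\sbf^*}^\pm}$ that misses every $\Omega_{Q_i}^\pm$ and still lies in $B(x_1,r/2)$. This is precisely where $\kappa\gg K^4$, rather than merely $\kappa\gtrsim K$, enters, and it is the step your sketch does not supply.
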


\begin{remark} Note that we define $\om_Q^\pm$ if and only if $\F^*_\pm$ is non-empty.
It may be that one  of $\F^*_+, \F^*_-$ is empty, 
but $\F^*_+$ and $\F^*_-$ cannot both be empty, since $\F^*$ is non-empty by assumption.
\end{remark}

\begin{proof}[Proof of Lemma \ref{lemmaCAD}] 
Without loss of generality we may assume that $\Omega_{Q_j}\pm$ is not contained in  $\om_{\sbf^*}^\pm$ for 	
all $Q_j\in\F^*$ (otherwise we can drop those cubes from $\F^*$). On the other hand, we notice that $\Omega_Q^\pm$ 
is a union of (open) fattened Whitney cubes (assuming that it is non-empty):  each $\om_{Q_j}^\pm$ has this 
property by assumption, as does $\om_{\sbf^*}^\pm$  by construction.

We next observe that if $\Omega_Q^+$ (resp. $\Omega_Q^-$) is non-empty, then it is contained in
$\Omega$.   Indeed, by construction, $\Omega_Q^+$ is non-empty if and only if $\F^*_+$ is non-empty.
In turn, $\F^*_+$ is non-empty if and only if there is some $Q_j\in \F^*$ such that
$U_{Q_j}^+\subset \Omega_{Q_j}^+ \subset \Omega$, and moreover, the latter is true for every
$Q_j \in \F^*_+$, by definition.  But each such $Q_j$ belongs to $\sbf^*$, hence
$U_{Q_j}^+\subset  \om^+_{\sbf^*}$, again by construction (see \eqref{eq3.2}).  Thus, 
$\om_{\sbf^*}^+$ meets $\om$, and since $\om_{\sbf^*}^+\subset \ree\setminus \pom$, therefore
$\om_{\sbf^*}^+\subset \om$. Combining these observations, we see that $\Omega_Q^+\subset \om$.
Of course, the same reasoning applies to $\Omega_Q^-$, provided it is non-empty.  

In addition, 
since $\sbf^*\subset \sbf$, 
and since $K \gg K^{1/2}$, 
by Lemma \ref{lemma2.7} we have $\Omega_{\sbf^*}^\pm \subset 
B_Q^*= B(x_Q,K\ell(Q))$.  Furthermore,
$\Omega_{Q_j}^\pm\subset B^*_{Q_j}:= B(x_{Q_j},K \ell(Q_j))$, and since 
$\ell(Q_j) \leq 2^{-k_1}\ell(Q)\leq (100K)^{-1}\ell(Q)$, we obtain
\[\dist(\Omega_{Q_j}^\pm,Q) +\diam (\Omega_{Q_j}^\pm) \leq 3 K \ell(Q_j) \leq 3K 2^{-k_1} \ell(Q) \ll \ell(Q)\,.\]
Thus, in particular, $\Omega_{Q_j}^\pm\subset B_Q^*$, and therefore also 
$\om_Q^\pm\subset B_Q^*$.

It therefore remains to establish the chord-arc properties.
It is straightforward to prove the interior Corkscrew condition and the upper ADR bound, and we omit the
details.  Thus, we must verify the Harnack Chain condition,
 the lower ADR bound, and the exterior Corkscrew condition.

\smallskip
\noindent{\em Harnack Chains}.   
Suppose, without loss of generality, 
that $\Omega_Q^+$ is non-empty, and let $X,Y\in \Omega_Q^+$, with $|X-Y|= R$.
If $X$ and $Y$ both lie in $\Omega_{\sbf^*}^+$, or in the same $\om_{Q_j}^+$, then we can
connect $X$ and $Y$ by a suitable Harnack path, since each of these domains is chord-arc.
Thus, we may suppose either that 1) $X\in \Omega_{\sbf^*}^+$ and $Y$ lies in some $\om_{Q_j}^+$,
or that 2) $X$ and  $Y$  lie in two distinct $\om_{Q_{j_1}}^+$ and $\om_{Q_{j_2}}^+$.
We may reduce the latter case to the former case: 
by the separation property (ii) in Lemma \ref{lemmaCAD}, we must have
$R\gtrsim \kappa \max\big(\diam(\om_{Q_{j_1}}^+),\diam(\om_{Q_{j_2}}^+)\big)$, so given case 1), we
can connect $X\in \om_{Q_{j_1}}^+$ to the  center $Z_1$ of some $I_1^*\subset U^+_{Q_1}$,
and
$Y\in \om_{Q_{j_2}}^+$ to the center $Z_2$ of some $I_2\subset U^+_{Q_2}$, 
where $Q_1,Q_2 \in \sbf^*$,
with $Q_{j_i}\subset Q_i\subset Q$, and
$\ell(Q_i)\approx R$, $i=1,2$.  Finally, we can connect $Z_1$ and $Z_2$ using that $\Omega_{\sbf^*}^+$ is chord-arc. 

Hence, we need only construct a suitable Harnack Chain in Case 1).
We note that by assumption and construction,
$U_{Q_j}^+ \subset \Omega_{\sbf^*}^+ \cap \om_{Q_j}^+$.  

Suppose first that 
\begin{equation}\label{eqxyclose}
|X-Y|=R \leq c' \ell(Q_j)\,,
\end{equation}
where $c'\leq 1$ is a sufficiently small positive
constant to be chosen.  Since $Y \in \om^+_{Q_j} \subset B_{Q_j}^*$, we then have that
$X \in 2 B_{Q_j}^*$, so by the construction of $\Omega_{\sbf^*}^+$ and the separation
property (ii), it follows that 
$\delta(X) \geq c \ell(Q_j)$, where $c$ is a uniform constant depending only on the allowable parameters
(in particular, this fact is true for all $X \in \om_{\sbf^*}^+ \cap 2 B_{Q_j}^*$, so it does not depend on the
choice of $c'<1$).  Now choosing $c'\leq c/2$ (eventually, it may be even smaller), 
we find that $\delta(Y) \geq (c/2)\ell(Q_j)$.  
Moreover, $Y \in \om_{Q_j}^+ \subset B_{Q_j}^*$ implies that $\delta(Y) \le K\ell(Q_j)$. Also, since $X\in 2B_{Q_j}^*$ we have that $\delta(X) \le 2K\ell(Q_j)$.
Since
$\om_{Q_j}^+$ and $\om_{\sbf^*}^+$ are each the interior of a union of fattened Whitney cubes,
it follows that there are Whitney cubes $I$ and $J$, with $X\in I^*$, $Y\in J^*$, and
\[
\ell(I)\approx \ell(J) \approx \ell(Q_j)\,,\]
where the implicit constants depend on $K$.   
For  $c'$ small enough in \eqref{eqxyclose}, depending on the implicit
constants in the last display, and on the parameter $\tau$ in \eqref{whitney1},
this can happen only if $I^*$ and $J^*$ overlap (recall that we have fixed $\tau$ small enough that
$I^*$ and $J^*$ overlap if and only if $I$ and $J$ have a boundary point in common), in which case we may trivially connect
$X$ and $Y$ by a suitable Harnack Chain.

On the other hand, suppose that 
\[
|X-Y|=R \geq c' \ell(Q_j)\,.
\]
Let $Z \in  U_{Q_j}^+ \subset \Omega_{\sbf^*}^+ \cap \om_{Q_j}^+$, with
$\dist(Z, \pom_Q^+) \gtrsim \ell(Q_j)$ (we may find such a $Z$, since $U_{Q_j}^+$ is a union of fattened
Whitney cubes, all of length $\ell(I^*) \approx \ell(Q_j)$; just take $Z$ to be the center of
such an $I^*$).
We may then construct an appropriate Harnack Chain from $Y$ to $X$ by connecting
$Y$ to $Z$ via a Harnack Chain in the chord-arc domain $\Omega_{Q_j}^+$, and 
$Z$ to $X$ via a Harnack Chain in the chord-arc domain $\Omega_{\sbf^*}^+$.

\smallskip
\noindent{\em Lower ADR and Exterior Corkscrews}.  We will 
establish these two properties essentially simultaneously.  
Again suppose that, e.g.,  $\Omega_Q^+$ is non-empty. 
Let $x \in \pom_Q^+$, and consider $B(x,r)$, with 
$r < \diam \om_Q^+ \approx_K\ell(Q)$. 
Our main goal at this stage is to prove the following:
\begin{equation}\label{eqvolumebound}
\big| B(x,r) \setminus \overline{\om_Q^+}\big| \geq c r^{n+1}\,,
\end{equation}
with $c$ a uniform positive constant depending only upon allowable parameters (including $\kappa$).
Indeed, momentarily taking this estimate for granted, we may combine \eqref{eqvolumebound}
with the interior Corkscrew condition to deduce the lower ADR bound via the
relative isoperimetric inequality \cite[p. 190]{EG}.  In turn, with both the lower and upper 
ADR bounds in hand, \eqref{eqvolumebound} implies the existence of exterior Corkscrews 
(see, e.g., \cite[Lemma 5.7]{HM-I}).  

Thus, it is enough to prove \eqref{eqvolumebound}.
We consider the following cases.

\smallskip

\noindent{\bf Case 1}:  $B(x,r/2)$ does not meet $\pom_{Q_j}^+$ for any $Q_j \in \F^*_+$.
In this case, the exterior Corkscrew for $\om_{\sbf^*}^+$ associated with $B(x,r/2)$ easily implies 
\eqref{eqvolumebound}.

\smallskip

\noindent{\bf Case 2}: $B(x,r/2)$  meets $\pom_{Q_j}^+$ for at least one $Q_j \in \F^*_+$,
and $r \leq \kappa^{1/2} \ell(Q_{j_0})$, where $Q_{j_0}$ is chosen to have the largest
length $\ell(Q_{j_0})$ among those $Q_j$ such that 
$\pom_{Q_j}^+$  meets $B(x,r/2)$.  We now further split the present case into subcases.

\smallskip

\noindent{\bf Subcase 2a}:  $B(x,r/2)$  meets $\pom^+_{Q_{j_0}}$ at a point $Z$ with 
$\delta(Z) \leq (M\kappa^{1/2})^{-1} \ell(Q_{j_0})$, where $M$ is a large number to be chosen.
Then $B(Z, (M\kappa^{1/2})^{-1} r) \subset B(x,r)$, for $M$ large enough.  In addition, we claim that
$B(Z, (M\kappa^{1/2})^{-1} r)$ misses $\om_{\sbf^*}^+\cup\big(\cup_{j\neq j_0} \om_{Q_j}^+\big)$.
The fact that $B(Z, (M\kappa^{1/2})^{-1} r)$ misses every other $\om_{Q_j}^+, j\neq j_0$,
 follows immediately from the restriction  $r \leq \kappa^{1/2} \ell(Q_{j_0})$, and 
the separation property (ii).
To see that $B(Z, (M\kappa^{1/2})^{-1} r)$ misses $\om_{\sbf^*}^+$,
note that if $|Z-Y|< (M\kappa^{1/2})^{-1} r$, then 
\[\delta(Y) \leq \delta(Z) + (M\kappa^{1/2})^{-1} r \leq \left((M\kappa^{1/2})^{-1}
+M^{-1}\right) \ell(Q_{j_0}) \ll  \ell(Q_{j_0})\,,\]
for $M$ large.  On the other hand, 
\[\delta(Y) \gtrsim \ell(Q_{j_0})\,, \qquad \forall \,Y \in \om_{\sbf^*}^+ \cap
B\big(Z, \kappa^{1/2} \ell(Q_{j_0})\big)
\,,\]  by the construction of
$\om_{\sbf^*}^+$ and the separation property (ii).  Thus, the claim follows, for a
sufficiently large (fixed) choice of $M$.  Since $B(Z, (M\kappa^{1/2})^{-1} r)$ misses
$\om_{\sbf^*}^+$ and all other $\om_{Q_j}^+$, we inherit an exterior Corkscrew point in the present 
case (depending on $M$ and $\kappa$) from the chord-arc domain $\om_{Q_{j_0}}^+$.    Again \eqref{eqvolumebound} follows.

\smallskip

\noindent{\bf Subcase 2b}: $\delta(Z) \geq  (M\kappa^{1/2})^{-1} \ell(Q_{j_0})$,
for every $Z\in B(x,r/2)\cap \pom_{Q_{j_0}}^+$ (hence $\delta(Z) \approx_{\kappa,K}
\ell (Q_{j_0})$, since $\Omega^+_{Q_{j_0}} \subset B^*_{Q_{j_0}}$).  We claim that
consequently,
$x\in \partial I^*$, for some $I$ with $\ell(I) \approx \ell(Q_{j_0}) \gtrsim r$,
such that $\text{int}\, I^* \subset \om_Q^+$.  To see this, observe that it is clear 
if $x\in \pom_{Q_{j_0}}^+$ (just take $Z=x$).  Otherwise, by the separation property (ii),
the remaining 
possibility in the present scenario 
is that $x\in\partial U_{Q'}^+\cap \partial\om_{\sbf^*}^+$, for some $Q'\in \sbf^*$ with
$Q_{j_0}\subset Q'$,  in which case $\delta(x)\approx\ell(Q')\ge \ell(Q_{j_0})$.  Since also
$\delta(x)\le |x-Z|+\delta(Z)\lesssim_{\kappa, K}\ell(Q_{j_0})$, for any
$Z\in B(x,r/2)\cap\pom^+_{Q_{j_0}}$, the claim follows.

On the other hand, since 
$x\in \pom_Q^+$, there is a $J\in\W$ with $\ell(J) \approx \ell(Q_{j_0})$, such that
$J^*$ is not contained in $\om_Q^+$.  We then have an exterior Corkscrew point in
$J^* \cap B(x,r)$, and \eqref{eqvolumebound} follows in this case.

\smallskip

\noindent{\bf Case 3}: $B(x,r/2)$  meets $\pom_{Q_j}^+$ for at least one $Q_j \in \F^*_+$,
and $r > \kappa^{1/2} \ell(Q_{j_0})$, where as above $Q_{j_0}$ has the largest
length $\ell(Q_{j_0})$ among those $Q_j$ such that 
$\pom_{Q_j}^+$  meets $B(x,r/2)$.   In particular then,
$r \gg 2K \ell(Q_{j_0})=\diam(B^*_{Q_{j_0}})\geq \diam(\om^+_{Q_{j_0}})$, since we assume
$\kappa \gg K^4$.

We next claim that $B(x,r/4)$ contains some $x_1 \in \pom_{\sbf^*}^+ \cap
\pom_Q^+$.  This is clear if $x\in \partial\om_{\sbf^*}^+$ by taking $x_1=x$. Otherwise,  $x\in\partial\Omega_{Q_j}^+$ for some $Q_j\in\F^*$. Note that $U_{Q_j}^{\pm}\subset B(x_{Q_j}, K\ell(Q_j))\subset B(x, 2 K\ell(Q_{j}))$. Also,  
$U_{Q_j}^{\pm}\subset \om_{\sbf^*}^\pm$, by construction. 
On the other hand we note that if 
$Z\in U_Q^{\pm}$ we have by \eqref{dist:UQ-pom}  
\[
|Z-x_{Q_j}|\ge \delta(Z)\gtrsim \eta^{1/2}\ell(Q) 
\ge \eta^{1/2}2^{k_1}\ell(Q_{j})
\gg K\ell(Q_{j})
\]  
by our choice of $k_1$. 
By this fact, and
the definition of $\om_{\sbf^*}$, we have
$$U_Q^{\pm}
\subset  \om_{\sbf^*}^\pm\setminus B(x, 3 K\ell(Q_{j}))\,.$$ 
Using then that $\om_{\sbf^*}^\pm$ is connected, 
we see that  a path within $\om_{\sbf^*}^\pm$ joining  
$U_{Q_j}^{\pm}$ with $U_{Q}^{\pm}$ must meet $\partial B(x,3K\ell(Q_{j}))$.
Hence we can find  $Y^\pm\in \om_{\sbf^*}^\pm\cap \partial B(x,3K\ell(Q_{j}))$. 
By Lemma \ref{lemma2.7},  $\om_{\sbf^*}^+$ and $\om_{\sbf^*}^-$ are disjoint 
(they live respectively above and below the graph $\Gamma_{\sbf}$), 
so a path joining 
$Y^+$ and $Y^-$  within   $\partial B(x,3K\ell(Q_{j}))$ meets some 
$x_1\in \partial \om_{\sbf^*}^+\cap \partial B(x,3K\ell(Q_{j}))$. 
On the other hand, 
$x_1\notin \overline{\Omega_{Q_j}^+}$, since 
$\overline{\Omega_{Q_j}^+}\subset \overline{B_{Q_j}^*}\subset 
B(x,3K\ell(Q_{j}))$. Furthermore,
$x_1\in \partial B(x,3K\ell(Q_{j}))\subset \kappa B_{Q_j}^*$, 
so by assumption (ii), we necessarily have that $x_1\notin \overline{\Omega_{Q_k}^+}$ for $k\neq j$. Thus, 
$x_1\in\partial\Omega_Q^+$, and moreover,
since  $B(x,r/2)$ meets $\partial \Omega_{Q_j}^+$ (at $x$) we have
$\ell(Q_j)\le\ell(Q_{j_0})$.   Therefore, 
$x_1$ is the claimed point, since in the current case 
$3K\ell(Q_{j})\le 3K\ell(Q_{j_0}) \ll r$.

With the point $x_1$ in hand, we note that 
\begin{equation}\label{eqballcontain}
B(x_1,r/4) \subset B(x,r/2)\quad \text{and } \quad B(x_1,r/2) \subset B(x,r)\,.
\end{equation}
By the exterior Corkscrew condition for $\om_{\sbf^*}^+$, 
\begin{equation}\label{eqcs1}
\big|B(x_1,r/4) \setminus \overline{\om_{\sbf^*}^+} \,\big| \geq \,c_1 r^{n+1}\,,
\end{equation}
for some constant $c_1$ depending only on $n$ and the ADR/UR constants for $\pom$, by 
Lemma \ref{lemma2.7}.
Also, for each $\om^+_{Q_j}$ whose boundary meets 
$B(x_1,r/4) \setminus \overline{\om_{\sbf^*}^+}$ (and thus meets $B(x,r/2)$),
\begin{equation}\label{eq3.26}
\kappa^{1/4}\diam(B^*_{Q_j}) \leq 
\kappa^{1/4}\diam(B^*_{Q_{j_0}})\leq 2K\kappa^{1/4}\ell(Q_{j_0})
 \leq \frac{2K r}{\kappa^{1/4}} \ll r\,, 
 \end{equation}
in the present scenario. 
Consequently, $\kappa^{1/4}B_{Q_j}^* \subset B(x_1,r/2)$, for all such $Q_j$.

We now make the following claim.

\noindent{\em Claim 1}:
\begin{equation}\label{eqcs2}
\big|B(x_1,r/2) \setminus \overline{\om^+_Q}
\,\big| \geq \,c_2 r^{n+1}\,,
\end{equation}
for some $c_2>0$ depending only on allowable parameters.

Observe that by the second
containment in \eqref{eqballcontain},  we obtain \eqref{eqvolumebound} as an  immediate
consequence of \eqref{eqcs2}, and thus the proof will be complete once we have
established Claim 1.

 To prove the claim, we
suppose first that
\begin{equation}\label{eqcs3}
\sum \big| B_{Q_j}^*  \setminus \overline{\om_{\sbf^*}^+} \,\big| \leq\, \frac{c_1}{2}\, r^{n+1}\,,
\end{equation}
where the sum runs over those $j$ such that
$\overline{B_{Q_j}^*}$ meets $B(x_1,r/4)\setminus   \overline{\om_{\sbf^*}^+}$, 
and $c_1$ is the constant in \eqref{eqcs1}.  In that case, \eqref{eqcs2} holds with
$c_2 = c_1/2$  (and even with
$B(x_1,r/4)$), by definition of $\om^+_Q$  (see \eqref{eqomdef}), and the fact that
$\om_{Q_j} \subset B_{Q_j}^*$.  
On the other hand, if
\eqref{eqcs3} fails, then summing over the same subset of indices $j$, we have
\begin{equation}\label{eqcs4}
C K \sum \ell(Q_j)^{n+1}   \geq \sum
\big| B_{Q_j}^*  \setminus \overline{\om_{\sbf^*}^+} \,\big| \geq\, \frac{c_1}{2}\, r^{n+1}
\end{equation}
We now make a second claim:

\noindent{\em Claim 2}:  For $j$ appearing in the previous sum, we have 
\begin{equation}\label{eqcs5}
 \big| \left(\kappa^{1/4}B_{Q_j}^*\setminus B_{Q_j}^*\right)  
 \setminus \overline{\om_{\sbf^*}^+} \,\big| \geq\, c\, \ell(Q_j)^{n+1}\,,
\end{equation}
for some uniform $c>0$.  

Taking the latter claim for granted momentarily, we insert 
estimate \eqref{eqcs5} into \eqref{eqcs4} and sum, to obtain
\begin{equation}\label{eqcs6}
\sum  \big| \left(\kappa^{1/4}B_{Q_j}^*\setminus B_{Q_j}^*\right)  
 \setminus \overline{\om_{\sbf^*}^+} \,\big|  \gtrsim r^{n+1}\,.
\end{equation}
By the separation property (ii), the balls $\kappa^{1/4} B^*_{Q_{j}}$ are pairwise disjoint, and
by assumption $\om_{Q_j}^+\subset B_{Q_j}^*$.  Thus,
for any given $j_1$,  $\kappa^{1/4} B^*_{Q_{j_1}} \setminus \overline{B^*_{Q_{j_1}}}$ misses 
$\cup_j \overline{\Omega^+_{Q_j}}$.   Moreover, as noted above (see \eqref{eq3.26}
and the ensuing comment), $\kappa^{1/4}B_{Q_j}^* \subset B(x_1,r/2)$ for each $j$
under consideration in \eqref{eqcs3}-\eqref{eqcs6}.  Claim 1 now follows.

We turn to the proof of Claim 2.  There are two cases:  if $\frac12 \kappa^{1/4} B_{Q_j}^* \subset
\ree\setminus \overline{\om_{\sbf^*}^+}$, then \eqref{eqcs5} is trivial, since $\kappa \gg 1$.  Otherwise,
$\frac12 \kappa^{1/4} B_{Q_j}^*$ contains a point $z\in \pom_{\sbf^*}^+$.  In the latter case,
by the exterior Corkscrew condition for $\om_{\sbf^*}^+$, 
\[\big| B\big(z, 2^{-1}\kappa^{1/4}K \ell(Q_j)\big) \setminus \overline{\om_{\sbf^*}^+}\,\big|
\,\gtrsim \,\kappa^{(n+1)/4}\big(K\ell(Q_j)\big)^{n+1}  \gg \, |B_{Q_j}^*|\,,
\]
since $\kappa \gg 1$.   On the other hand, $ B\big(z, 2^{-1}\kappa^{1/4}K \ell(Q_j)\big)
\subset \kappa^{1/4} B_{Q_j}^*$, and \eqref{eqcs5} follows.
\end{proof}
\subsection{Step 2:  Proof of $H[M_0,1]$}\label{ss3.2}  We shall deduce $H[M_0,1]$ from the following pair of claims.

\begin{claim}\label{claim3.17} $H[0,\theta]$ holds for every $\theta\in(0,1]$.
\end{claim}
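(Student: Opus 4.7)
The assumption $a=0$ in $H[0,\theta]$ forces $\mut(\dd_Q)=0$, hence $\alpha_{Q'}=0$ for every $Q'\subseteq Q$. Thus by \eqref{eq4.0}, no $Q'\in\dd_Q$ belongs to $\M\cup\B$, and in particular $Q\in\G$. Let $\sbf\subset\G$ be the stopping time regime containing $Q$. If some $Q'\subsetneq Q$ belonged to a different regime $\sbf'$, then its maximal cube $Q(\sbf')$ would satisfy $Q(\sbf')\subsetneq Q$ and $Q(\sbf')\in\M$, contradicting $\alpha_{Q(\sbf')}=0$. Hence \emph{every} $Q'\subseteq Q$ lies in $\sbf$, and $\dd_Q$ is a semi-coherent subregime of $\sbf$ with maximal element $Q$.

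I would then invoke Lemma \ref{lemma2.7} applied to $\dd_Q$ directly to produce the two chord-arc domains $\Omega^\pm:=\Omega^\pm_{\dd_Q}$, each a union of fattened Whitney cubes with chord-arc constants depending only on the allowable parameters. By construction $U_Q^\pm\subset\Omega^\pm\subset B_Q^*$, using \eqref{def:BQ*}. Decompose $V_Q=V_Q^+\cup V_Q^-$ with $V_Q^\pm:=V_Q\cap U_Q^\pm$, and set $F_Q^\pm:=\bigcup_{X\in V_Q^\pm}F_{car}(X,Q,\lambda)$, so that $F_Q\subset F_Q^+\cup F_Q^-$. Take $V^*_Q:=V_Q$, and for each sign $\pm$ with $V_Q^\pm\neq\emptyset$ set $\Omega^\pm_Q:=\Omega^\pm$. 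Containment $\Omega^\pm_Q\subset\Omega$ is a purely topological observation: each $\Omega^\pm$ is a connected subset of $\ree\setminus\pom$ that meets $\Omega$ (via $V_Q^\pm$), and $\Omega$ is a union of connected components of $\ree\setminus\pom$ because $\partial\Omega\subset\pom$ makes $\Omega\cap C$ both open and closed in every connected component $C$ of $\ree\setminus\pom$.

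The remaining and only nontrivial point is the boundary measure estimate, for which the key claim is $F_Q^\pm\subset\pom^\pm_Q\cap Q$. To verify this, fix $y\in F_Q^+$, choose $X\in V_Q^+$ and a $\lambda$-carrot path $\gamma(y,X)\subset\Omega$ witnessing $y\in F_{car}(X,Q,\lambda)$. For every $Q'\in\dd_Q$ with $y\in Q'$ and $\ell(Q')\le 2^{-k_0}\ell(Q)$, Lemma \ref{lemma3.15} supplies a point $Z_{Q'}\in\gamma\cap U_{Q'}\cap\Omega$, and Lemma \ref{lemma3.37} (applicable because $Q,Q'\in\sbf$) then forces $Z_{Q'}\in U^+_{Q'}\subset\Omega^+$. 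Since $|Z_{Q'}-y|\lesssim K^{1/2}\ell(Q')\to 0$ as $\ell(Q')\to 0$ and $y\in\pom$ is not in $\Omega^+$, we conclude $y\in\partial\Omega^+=\pom^+_Q$. Consequently
\[
\sum_\pm\sigma(\pom^\pm_Q\cap Q)\,\ge\,\sum_\pm\sigma(F_Q^\pm)\,\ge\,\sigma(F_Q)\,\ge\,\theta\,\sigma(Q),
\]
which establishes $H[0,\theta]$ with $c_0(\theta):=\min(\theta,1/2)<1$. The only slightly delicate point is pinning each carrot path to the correct side of the Lipschitz graph $\Gamma_\sbf$ all the way down to $y$, which is exactly what Lemma \ref{lemma3.37} provides under our parameter choice $\eta\ll\lambda^4$, $K\gg\lambda^{-4}$; once the $\mut(\dd_Q)=0$ reduction places every subcube of $Q$ inside a single regime, everything else is bookkeeping.
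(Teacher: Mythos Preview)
Your proof is correct, and the overall architecture (showing $\dd_Q$ lies in a single regime $\sbf$, then applying Lemma \ref{lemma2.7} to produce the chord-arc domains $\Omega^\pm_{\dd_Q}$) matches the paper's. The difference lies in how the boundary measure estimate is obtained. The paper simply invokes \cite[Proposition A.14]{HMM} to conclude that the \emph{entire} cube $Q$ is contained in $\pom^\pm_{\sbf'}\cap\pom$ for both choices of sign, so that $H[0,\theta]$ holds trivially with a constant independent of $\theta$. You instead give a self-contained argument using Lemmas \ref{lemma3.15} and \ref{lemma3.37} to trace each carrot path down through successively smaller cubes and thereby show $F_Q^\pm\subset\pom^\pm_Q\cap Q$; this yields the weaker but still sufficient bound $\sum_\pm\sigma(\pom^\pm_Q\cap Q)\geq\theta\sigma(Q)$. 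Your route has the virtue of avoiding the external citation and of illustrating in miniature the mechanism (pinning the path to the correct side of $\Gamma_\sbf$) that drives the harder cases later; the paper's route is shorter and gives a $\theta$-independent constant.
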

\begin{proof}[Proof of Claim \ref{claim3.17}] 
	If $a=0$ in \eqref{eq3.12}, then $\|\mut\|_{\C(Q)}=0$, whence it follows by Claim \ref{claim3.16}, with
	$\F=\emptyset$, that there is a stopping time regime $\sbf\subset \G$, with $\dd_Q\subset \sbf$.  
	Hence $\sbf':=\dd_Q$ is a coherent subregime of $\sbf$,  so by 
	Lemma \ref{lemma2.7}, 
	each of $\om_{\sbf'}^\pm$ 
	is a CAD, containing $U_Q^\pm$, respectively, with $\om_{\sbf'}^\pm\subset B_Q^*$ by \eqref{def:BQ*}.  
	 Moreover,  by \cite[Proposition A.14]{HMM}
$$
Q \subset
\pom_{\sbf'}^\pm \cap\pom\,,
$$ 
so that $\sigma(Q)\leq \sigma(\pom_{\sbf'}^\pm \cap\pom).$ 
	Thus, $H[0,\theta]$ holds trivially.
\end{proof}

\begin{claim}\label{claim3.18}
	There is a uniform constant $b>0$ such that
	$H[a,1] \implies H[a+b,1]$, for all $a\in [0,M_0)$.
\end{claim}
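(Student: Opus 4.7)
\medskip

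\noindent\textit{Proof plan for Claim \ref{claim3.18}.}

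The plan is a Carleson-extrapolation/stopping-time argument: reduce $H[a+b,1]$ on $Q$ to (finitely many) applications of $H[a,1]$ on sub-cubes, and glue the resulting chord-arc pieces via Lemma \ref{lemmaCAD}. Fix $Q\in\dd(\pom)$ and $V_Q\subset U_Q\cap\Omega$ satisfying the hypotheses of $H[a+b,1]$, i.e., $\mut(\dd_Q)\le(a+b)\sigma(Q)$ and $\sigma(F_Q)\ge\sigma(Q)$, where $F_Q:=\bigcup_{X\in V_Q}F_{car}(X,Q,\lambda)$. The step-size $b>0$ will be a fixed small constant depending only on allowable parameters.

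\textit{Step 1 (stopping-time decomposition).} Pick a threshold $\tau\in(0,a]$ (to be tuned) and define $\F=\{Q_j\}\subset\dd_Q\setminus\{Q\}$ to be the maximal sub-cubes of $Q$ of scale $\ell(Q_j)\le 2^{-k_1}\ell(Q)$ satisfying either $\mut(\dd_{Q_j})/\sigma(Q_j)>\tau$, or $\ell(Q_j)=2^{-k_1}\ell(Q)$ (so that $\F$ partitions $Q$ up to a $\sigma$-null set). Here $k_1\ge k_0$ is the constant from Lemma \ref{lemmaCAD}.

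\textit{Step 2 (properties of $\F$).} Choose $\tau\le a/C_n$, with $C_n$ the ADR-doubling constant relating the $\sigma$-measure of a cube to that of its dyadic parent. Then each $Q_j\in\F$ obeys
\[
\mut(\dd_{Q_j})\,\le\,\mut(\dd_{Q_j^*})\,\le\,\tau\,\sigma(Q_j^*)\,\le\, C_n\,\tau\,\sigma(Q_j)\,\le\, a\,\sigma(Q_j),
\]
either by the maximality of $Q_j$ (mass rule) or by the scale rule ($\mut(\dd_{Q_j})/\sigma(Q_j)\le\tau\le a$ directly). Thus each $Q_j$ meets the Carleson hypothesis of $H[a,1]$. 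Moreover, every $Q'\in\dd_\F\cap\dd_Q$ with $Q'\subsetneq Q$ satisfies $\mut_\F(\dd_{Q'})\le\mut(\dd_{Q'})\le\tau\sigma(Q')\le\sigma(Q')/2$. The residual estimate at the top scale, $\mut_\F(\dd_Q)=\mut(\dd_Q)-\sum_j\mut(\dd_{Q_j})\le\sigma(Q)/2$, holds once $b$ is sufficiently small (balancing the finitely many intermediate scales between $Q$ and the stopping cubes against $b$). Hence $\|\mut_\F\|_{\C(Q)}\le 1/2$, and Claim \ref{claim3.16} places all cubes of $\sbf'':=(\dd_\F\cup\F\cup\F')\cap\dd_Q$ in a single stopping-time regime $\sbf\subset\G$.

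\textit{Step 3 (apply $H[a,1]$ on the stopping cubes).} For each $Q_j\in\F$, since $\ell(Q_j)\le 2^{-k_0}\ell(Q)$, Lemma \ref{lemma:VQ} produces $V_{Q_j}\subset U_{Q_j}\cap\Omega$ such that $F_{Q_j}\supset F_Q\cap Q_j$; in the $\theta=1$ regime this gives $\sigma(F_{Q_j})\ge\sigma(Q_j)$. The induction hypothesis $H[a,1]$ now yields chord-arc subdomains $\{\Omega_{Q_j}^{i}\}_i$, each a union of fattened Whitney cubes with uniformly controlled chord-arc constants, satisfying $U_{Q_j}^i\subset\Omega_{Q_j}^i\subset B_{Q_j}^*\cap\Omega$ and $\sum_i\sigma(\partial\Omega_{Q_j}^i\cap Q_j)\ge c_a\sigma(Q_j)$.

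\textit{Step 4 (thin, sign-split, and glue).} A Vitali-type sub-selection within $\F$ yields $\F^*\subset\F$ with the balls $\{\kappa B_{Q_j}^*\}_{Q_j\in\F^*}$ pairwise disjoint, and $\sum_{\F^*}\sigma(Q_j)\ge c''\,\sigma(Q)$ for a dimensional $c''>0$. The connected components $U_{Q_j}^\pm$ lie on opposite sides of $\Gamma_\sbf$ by Lemma \ref{lemma2.7}; assigning $Q_j$ to $\F^*_+$ or $\F^*_-$ according to the side carrying the larger boundary portion from Step 3 loses at most a factor of $1/2$ in $c_a$. Lemma \ref{lemmaCAD} then produces chord-arc subdomains $\Omega_Q^\pm\subset B_Q^*\cap\Omega$ with uniform chord-arc constants, containing the respective $\Omega_{Q_j}^\pm$.

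\textit{Step 5 (conclude).} Since the $Q_j\in\F^*$ are disjoint and $\partial\Omega_Q^\pm\cap\pom\supset\bigcup_{Q_j\in\F^*_\pm}(\partial\Omega_{Q_j}^\pm\cap Q_j)$, one obtains
\[
\sum_\pm\sigma\bigl(\partial\Omega_Q^\pm\cap Q\bigr)\,\ge\,\frac{c_a}{2}\sum_{Q_j\in\F^*}\sigma(Q_j)\,\ge\,\frac{c''c_a}{2}\,\sigma(Q)\,,
\]
giving $H[a+b,1]$ with $c_{a+b}:=c''c_a/2$.

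\textit{Main obstacle.} The delicate point is Step 2: simultaneously securing $\mut(\dd_{Q_j})\le a\sigma(Q_j)$ (needed for $H[a,1]$) and $\|\mut_\F\|_{\C(Q)}\le 1/2$ (needed for Claim \ref{claim3.16} and Lemma \ref{lemmaCAD}). The first forces $\tau$ small relative to $a$; the second constrains the residual Carleson mass $\mut_\F(\dd_Q)$ and dictates that $b$ be no larger than a fixed constant, uniform in $a\in[0,M_0)$. The edge case $\alpha_Q=\sigma(Q)$ (which can occur only when $a+b\ge 1$, and forces $Q$ or its descendants in $\dd_*(Q)$ to meet $\M\cup\B$) is handled by first splitting $Q$ into its grandchildren and applying the argument on each.
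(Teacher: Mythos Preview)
Your architecture is right (apply $H[a,1]$ on sub-cubes, then glue via Lemma \ref{lemmaCAD}), but Step~2 contains a genuine gap that cannot be repaired as stated. You assert that \emph{every} $Q_j\in\F$ satisfies $\mut(\dd_{Q_j})\le a\,\sigma(Q_j)$, but no stopping rule of the kind you describe can secure this simultaneously with $\|\mut_\F\|_{\C(Q)}\le 1/2$. First, your choice $\tau\in(0,a]$ is vacuous when $a=0$, which is precisely the first step of the induction. Second, for $a>0$ your ``scale rule'' selects every cube at level $k_1$, and such a $Q_j$ need not satisfy $\mut(\dd_{Q_j})/\sigma(Q_j)\le\tau$; its parent lies above the range of your stopping time, so the maximality argument does not apply to it. If instead you restrict $\F$ to cubes with small density, the family need not cover $Q$, since $\mut(\dd_{Q'})/\sigma(Q')$ can remain bounded away from $0$ along an entire tower of descendants. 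Finally, the ``residual estimate'' $\mut_\F(\dd_Q)\le\sigma(Q)/2$ fails whenever $\alpha_Q=\sigma(Q)$ (allowed once $a+b\ge 1$), and passing to grandchildren only relocates the obstruction.

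The paper handles this tension by invoking the Corona-type extraction Lemma \ref{lemma:Corona}: given $\mut(\dd_Q)\le(a+b)\sigma(Q)$, it produces $\F$ with $\|\mut_\F\|_{\C(Q)}\le Cb$ (so $b:=1/(2C)$ gives the $1/2$ bound automatically) together with the packing estimate $\sigma\big(\bigcup_{\F_{bad}}Q_j\big)\le\frac{a+b}{a+2b}\,\sigma(Q)$. The key point is that one must \emph{allow} bad cubes where the induction hypothesis cannot be invoked, and instead control their total measure; $H[a,1]$ is then applied to a \emph{child} $Q_j'$ of each $Q_j\in\F_{good}$, chosen by pigeonholing from $\mut(\dd_{Q_j}\setminus\{Q_j\})\le a\,\sigma(Q_j)$. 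The paper also needs a preliminary dichotomy you omit: in Case~1 some $Q'\in\dd_Q^{k_1}$ already satisfies $\mut(\dd_{Q'})\le a\,\sigma(Q')$, and one works directly on that single $Q'$, connecting back to $U_Q$ along the carrot paths; in Case~2 no such $Q'$ exists, and it is precisely this hypothesis that forces $\ell(Q_j)\le 2^{-k_1}\ell(Q)$ for every $Q_j\in\F_{good}$ (your \eqref{eq:erwgte}), the scale separation needed for Lemma \ref{lemmaCAD}. Your Steps 3--5 are essentially the paper's Case~2b once the correct family $\F_{good}$ is in hand; you are also missing the easy sub-case (Case~2a) where $\sigma(F_0)\gtrsim\sigma(Q)$ and $\Omega_{\sbf'}^\pm$ serves directly as the chord-arc domain.
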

Combining Claims \ref{claim3.17} and \ref{claim3.18}, we find that $H[M_0,1]$ holds.

To prove Claim \ref{claim3.18}, we shall require the following.
\begin{lemma}[{\cite[Lemma 7.2]{HM-I}}]\label{lemma:Corona}
	Suppose that $E$ is an $n$-dimensional ADR set,
	and let $\mut$ be a discrete Carleson measure, as in \eqref{eq4.1}-\eqref{eq4.7a}
	above.   Fix $Q\in \dd(E)$.
	Let $a\geq 0$ and $b>0$, and suppose that
	$\mut(\dd_{Q})\leq (a+b)\,\sigma(Q).$
	Then there is a family $\F=\{Q_j\}\subset\dd_{Q}$
	of pairwise disjoint cubes, and a constant $C$ depending only on $n$
	and the ADR constant such that
	\begin{equation} \label{Corona-sawtooth}
	\|\mut_\F\|_{\C(Q)}
	\leq C b,
	\end{equation}
	\begin{equation}
	\label{Corona-bad-cubes}
	\sigma \bigg(\bigcup_{\F_{bad}}Q_j\bigg)
	\leq \frac{a+b}{a+2b}\, \sigma(Q)\,,
	\end{equation}
	where $\F_{bad}:=
	\{Q_j\in\F:\,\mut\big(\dd_{Q_j}\setminus \{Q_j\}\big)>\,a\sigma(Q_j)\}$.
\end{lemma}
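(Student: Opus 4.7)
The plan is a classical corona / Calderón--Zygmund stopping-time selection. Let $\F=\{Q_j\}$ be the family of maximal dyadic \emph{proper} subcubes $Q_j\subsetneq Q$ satisfying
\[
\mut(\dd_{Q_j}) \,>\, (a+2b)\,\sigma(Q_j)\,.
\]
These cubes are pairwise disjoint by maximality, and since the hypothesis gives $\mut(\dd_Q)\le (a+b)\sigma(Q)<(a+2b)\sigma(Q)$, the top cube $Q$ itself is automatically excluded, so every $Q_j\in\F$ is a proper subcube of $Q$.

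With this $\F$, the bad-cube bound \eqref{Corona-bad-cubes} should drop out immediately. Indeed $\F_{bad}\subset\F$ by definition, and the $Q_j$'s are pairwise disjoint subsets of $Q$, so summing the stopping-time inequality and invoking the hypothesis yields
\[
(a+2b)\sum_{Q_j\in\F}\sigma(Q_j) \,<\, \sum_{Q_j\in\F}\mut(\dd_{Q_j}) \,\le\, \mut(\dd_Q) \,\le\, (a+b)\,\sigma(Q)\,,
\]
which gives $\sigma\bigl(\bigcup_{\F_{bad}}Q_j\bigr) \,\le\, \sigma\bigl(\bigcup_{\F}Q_j\bigr)\,\le\, \tfrac{a+b}{a+2b}\,\sigma(Q)$.

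The Carleson-norm estimate \eqref{Corona-sawtooth} is where the main difficulty lies. For any $Q'\in\dd_\F\cap\dd_Q$, the maximality clause gives $\mut(\dd_{Q'})\le(a+2b)\sigma(Q')$; combining with the identity
\[
\mut_\F(\dd_{Q'}) \,=\, \mut(\dd_{Q'}) \,-\!\! \sum_{Q_j\in\F,\,Q_j\subset Q'}\!\! \mut(\dd_{Q_j})
\]
and the stopping inequality at each $Q_j$, one gets $\mut_\F(\dd_{Q'}) \le (a+2b)\,\sigma\bigl(Q'\setminus\bigcup_{Q_j\subset Q'}Q_j\bigr)$. Sharpening this prefactor from $(a+2b)$ down to a uniform multiple of $b$, independently of $a$, is the crux of the argument: one must exploit the structural fact that $\alpha_{Q''}\in\{0,\sigma(Q'')\}$ together with ADR doubling, which prevents the density $\mut(\dd_\cdot)/\sigma(\cdot)$ from dropping discontinuously across dyadic generations without an appreciable amount of $\sigma$-mass being captured by $\F$. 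In practice this is done either by supplementing the stopping with an auxiliary threshold of order $b$ inside the sawtooth, or by iterating the construction in a Lewis-type telescoping manner so that the residual mass above $\F$ in each Whitney scale is controlled by $b\,\sigma(\cdot)$; the bookkeeping required to do this simultaneously with the bad-cube bound already proved is the only real obstacle.
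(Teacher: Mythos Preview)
The paper does not prove this lemma at all: it simply cites \cite[Lemma~7.2]{HM-I} and remarks that the argument there needs only a dyadic cube structure together with dyadic doubling of $\sigma$. So there is no in-paper proof to compare against; what follows is an assessment of your sketch on its own terms.

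Your derivation of \eqref{Corona-bad-cubes} from the stopping rule is correct. The gap is entirely at \eqref{Corona-sawtooth}, and it is more serious than ``bookkeeping'': with your stopping family the bound $\|\mut_\F\|_{\C(Q)}\le Cb$ can outright fail. Take $\alpha_Q=\sigma(Q)$ and $\alpha_{Q''}=0$ for every proper $Q''\subsetneq Q$ (this is allowed by \eqref{eq4.0}). Then $\mut(\dd_Q)=\sigma(Q)$, so the hypothesis $\mut(\dd_Q)\le(a+b)\sigma(Q)$ holds for any $a\ge 1-b$; choose $a=1$ and $b$ small. No proper subcube carries any $\mut$-mass, so your rule selects $\F=\emptyset$, whence $\mut_\F=\mut$ and $\|\mut_\F\|_{\C(Q)}=1$, which is not $\lesssim b$. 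The family that works here is $\F=\{Q\}$, which you have explicitly excluded. The estimate $\mut_\F(\dd_{Q'})\le(a+2b)\,\sigma(Q'\setminus\bigcup Q_j)$ you derive is sharp for your $\F$ and simply cannot be improved to $Cb\,\sigma(Q')$ in general.

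The underlying issue is that your threshold $a+2b$ is calibrated to the \emph{total} density, whereas \eqref{Corona-sawtooth} requires the stopping to be triggered by an accumulation of order $b$ seen from within the sawtooth. In the argument of \cite{HM-I} the selection of $\F$ is made so that this $b$-scale control is built in, and then \eqref{Corona-bad-cubes} is recovered by combining the defining inequality for $\F_{bad}$ with the global bound $\mut(\dd_Q)\le(a+b)\sigma(Q)$; in particular (and contrary to your proposed fix) the special $0/1$ structure of $\alpha_Q$ is not used, consistent with the paper's remark following the lemma. Your two suggested repairs point vaguely in the right direction, but as written your stopping rule is the wrong one and the essential step of the proof is missing rather than merely unpolished.
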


We refer the reader to \cite[Lemma 7.2]{HM-I} for the proof.   We remark that the lemma is stated
in \cite{HM-I} in the case that $E$ is the boundary of a connected domain, but the proof actually requires only that
$E$ have a dyadic cube structure, and that $\sigma$ be a
non-negative, dyadically doubling Borel measure on $E$.  In our case, we shall of course apply the lemma with
$E=\pom$, where $\om$ is open, but not necessarily connected.

\begin{proof}[Proof of Claim \ref{claim3.18}]  We assume that $H[a,1]$ holds, for some  $a\in[0,M_0)$.
	Set $b= 1/(2C)$, where $C$ is the constant in
	\eqref{Corona-sawtooth}.
	Consider a cube $Q\in \dd(\pom)$ with
	$\mut(\dd_Q) \leq (a+b) \sigma(Q)$.   Suppose that there is a set $V_Q\subset U_Q\cap\om$ such that
	\eqref{eq3.10} holds with $\theta=1$.  We fix $k_1>k_0$ (see \eqref{eq3.15}) large 
	enough so that $2^{k_1}> 100 K$. 
	
	\smallskip
	
	\noindent{\bf Case 1}: There exists $Q'\in\dd_{Q}^{k_1}$ (see \eqref{eq3.4aaa})
	with  $\mut(\dd_{Q'}) \le a\sigma(Q')$.  
	
	In the present scenario $\theta=1$, that is, $\sigma(F_Q)=\sigma(Q)$ (see 
	\eqref{eq3.10} and \eqref{defi-FQ}), which implies $\sigma(F_Q\cap Q')=\sigma(Q')$. We apply Lemma \ref{lemma:VQ} to obtain $V_{Q'}\subset U_{Q'}\cap\Omega$ and the corresponding $F_{Q'}$ which satisfies $\sigma(F_{Q'})=\sigma(Q')$. That is, \eqref{eq3.10} holds for $Q'$, with $\theta=1$.  Consequently, we may 
	apply the induction hypothesis
	to $Q'$, to find $V^*_{Q'}\subset V_{Q'}$, such 
	that for each $U_{Q'}^i$ meeting $V^*_{Q'}$, there is a chord-arc domain 
	$\Omega_{Q'}^i \supset U_{Q'}^i$ formed by a union of fattened Whitney cubes with $\Omega^i_{Q'}\subset B(x_Q',K\ell(Q'))\cap\Omega$, and 
\begin{equation}\label{eq3.34}
\sum_{i:  U_{Q'}^i \text{ meets } V^*_{Q'}}\sigma (\pom^i_{Q'}\cap Q')\geq  c_a\sigma(Q')\,.
\end{equation}
	By  Lemma \ref{lemma:VQ}, and since $k_1>k_0$, each $Y\in V^*_{Q'}$ lies 
	on a $\lambda$-carrot path connecting some $y\in Q'$ to some $X\in V_Q$; let $V^{**}_Q$ 
	denote the 
	set of all such $X$, and let ${\bf U}^{**}_Q$ (respectively, $\sub_{Q'}$) denote the collection of
	connected components of $U_Q$ (resp., of
	$U_{Q'}$) which meet $V^{**}_Q$ (resp., $V^*_{Q'}$).  
	By construction, each component $U_{Q'}^i \in \sub_{Q'}$
	may be joined to some corresponding component in ${\bf U}^{**}_Q$,
	via one of the 
	carrot paths.
	After possible
	renumbering, we designate this component as $U_Q^i$,  we let
	$X_i, Y_i$ denote the points in $V_Q^{**}\cap U_Q^i$
	and in $V_Q^*\cap U_{Q'}^i$, respectively, that are joined by this carrot path,
	and we let $\gamma_i$ be the portion of the carrot path joining $X_i$ to $Y_i$
	(if there is more than one such path or component, we just pick one).  We also let
	$V_Q^*=\{X_i\}_i$ be the collection of all of the selected points $X_i$.  We let 
	$\W_i$ be the collection of Whitney cubes meeting
	$\gamma_i$, and we then define 
$$\Omega_Q^i:= \Omega^i_{Q'} \bigcup \interior\left(\bigcup_{I\in\W_i}\,I^*\right) \bigcup
U_{Q}^i\,.$$
By the definition of a $\lambda$-carrot path, since 
$\ell(Q')\approx_{k_1}\ell(Q)$, and since  $ \Omega^i_{Q'}$ is a CAD,
one may readily verify that $\Omega^i_Q$ is also a CAD consisting of a union $\cup_k I_k^*$ 
of fattened Whitney cubes $I_k^*$.
We omit the details.  Moreover, by construction, 
$$\pom_Q^i\cap Q \supset \pom_{Q'}^i\cap Q',$$
so that the analogue of \eqref{eq3.34} holds with $Q'$ replaced by $Q$, and with $c_a$ replaced by 
$c_{k_1} c_a$.

It remains to verify that $\Omega_Q^i \subset B_Q^* = B(x_Q, K \ell(Q))$.
	By the induction hypothesis, and our choice of $k_1$,  since $\ell(Q')=2^{-k_1}\ell(Q)$ we have
	\[
	\Omega^i_{Q'}\subset B_{Q'}^*\cap\Omega = B(x_{Q'},K\ell(Q'))\cap\Omega
\subset B_Q^*\cap\Omega.
	\]
Moreover, $U_Q\subset B_Q^*$, by \eqref{def:BQ*}.   We therefore need only to consider $I^*$ with $I\in\W_i$.
For such an $I$, by definition there is a point $Z_i\in I\cap\gamma_i$ and $y_i\in Q'$, so that
	$Z_i\in\gamma(y_i,X_i)$ and thus, 
	\[
	\delta(Z_i)
	\le
	|Z_i-y_i|
	\le
	\ell(y_i,Z_i)
	\le
	\ell(y_i,X_i)
	\le
	\lambda^{-1}\delta(X_i)
	\le
	\lambda^{-1}|X_i-x_Q|
	\le
	\lambda^{-1} CK^{1/2}\ell(Q)\,,
	\]
where in the last inequality we have used \eqref{dist:UQ-pom} and the fact that $X_i\in U_Q$.	Hence, for every $Z\in I^*$ by \eqref{Whintey-4I}
	\[
	|Z-x_Q|
	\le
	\diam(2I)+|Z_i-y_i|+|y_i-x_Q|
	\le
	C|Z_i-y_i|+\diam(Q)
	<K\ell(Q),
	\]
	by our choice of the parameters $K$ and $\lambda$. 

We then obtain the conclusion of $H[a+b,1]$ in the present case.
	
	\smallskip

	\noindent{\bf Case 2}:  $\mut(\dd_{Q'}) > a\sigma(Q')$ for every $Q'\in\dd_{Q}^{k_1}$.

	In this case, we apply Lemma \ref{lemma:Corona}
	to obtain a pairwise disjoint family $\F=\{Q_j\}\subset \dd_Q$ such 
	that \eqref{Corona-sawtooth} and \eqref{Corona-bad-cubes} hold.	
	In particular, by our choice of $b=1/(2C)$,
	\begin{equation}\label{eq3.27bb}
	\|\mut_\F\|_{\C(Q)}\leq 1/2\,,
	\end{equation}
	so that the conclusions of Claim \ref{claim3.16} hold. 
	
	We set 
	\begin{equation}\label{eq3.27aa}
	F_0:= Q\setminus \bigg(\bigcup_\F Q_j\bigg)\,,
	\end{equation}
	define
	\begin{equation}\label{eq3.27a}\F_{good}:= \F\setminus \F_{bad}=
	\big\{Q_j\in\F:\,\mut\big(\dd_{Q_j}\setminus \{Q_j\}\big)\leq\,a\sigma(Q_j)\big\}\,,
	\end{equation}
	and let
	$$G_0:= \bigcup_{\F_{good}}Q_j\,.$$
	Then by \eqref{Corona-bad-cubes}
	\begin{equation}\label{eq3.25}
	\sigma(F_0 \cup G_0 )\, \geq \,\rho\sigma(Q)\,,
	\end{equation}
	where $\rho\in (0,1)$ is defined by
	\begin{equation}\label{eq3.42aa} \frac{a+b}{a+2b} \leq \frac{M_0+b}{M_0+2b}=:1-\rho \in (0,1)\,.
	\end{equation}
	We claim that 
	\begin{equation}\label{eq:erwgte}
	\ell(Q_j)\le 2^{-k_1}\,\ell(Q),
	\qquad
	\forall\,Q_j\in\F_{good}.
	\end{equation}
	Indeed, if this were not true for some $Q_j$, 
	then by definition of $\F_{good}$ and pigeon-holing there 
	will be $Q_j'\in\dd_{Q_j}$ with $\ell(Q_j')=2^{-k_1}\,\ell(Q)$ such 
	that $\mut(\dd_{Q_j'})\le a\,\sigma(Q_j')$. This contradicts the assumptions of the current case. 
	
	Note also that $Q\notin\F_{good}$ by \eqref{eq:erwgte} and $Q\notin\F_{bad}$ by \eqref{Corona-bad-cubes}, hence  $\F\subset\dd_Q\setminus\{Q\}$. By \eqref{eq3.27bb} and Claim \ref{claim3.16}, 
	there is some stopping time regime $\sbf\subset\G$ so that 
	$\sbf''=(\dd_\F\cup\F\cup\F')\cap\dd_Q$  
	is a semi-coherent subregime of $\sbf$,  where $\F'$ denotes the collection of all
	dyadic children of cubes in $\F$.
	
	\smallskip
	
	\noindent{\bf Case 2a}:   $\sigma(F_0) \geq \frac12\,  \rho \sigma(Q)$.
	
	In this case,
	$Q$ has an ample overlap with the boundary of a chord-arc domain with controlled chord-arc constants. Indeed, 
	let $\sbf'=\dd_\F\cap\dd_Q$ which, by \eqref{eq3.27bb} and Claim \ref{claim3.16}, is a semi-coherent subregime of some $\sbf\subset\G$. Hence, by Lemma \ref{lemma2.7},
	each of $\om_{\sbf'}^\pm$ is a CAD
with constants depending on the allowable parameters, formed by the union of fattened Whitney boxes, 
which satisfies
$\om_{\sbf'}^\pm\subset B_Q^*\cap\Omega$  (see \eqref{eq3.3aa}, \eqref{eq3.2}, and \eqref{def:BQ*}). Moreover, by 
\cite[Proposition A.14]{HMM} and \cite[Proposition 6.3]{HM-I} and our current assumptions,
	\[ 
	\sigma(Q\cap \pom_{\sbf'}^\pm)= \sigma(F_0)\geq \frac{\rho}{2} \sigma(Q)\,.
	\]
	Recall that in establishing $H[a+b,1]$, we assume that 
	there is a set $V_Q\subset U_Q\cap\om$ for which
	\eqref{eq3.10} holds with $\theta=1$. Pick then $X\in V_Q$ and set $V_Q^*:=\{X\}\subset V_Q$. Note that since $U_Q=U_Q^+\cup U_Q^-$ it follows that $X$ belongs to either $U_Q^+\cap\Omega$ or $U_Q^-\cap\Omega$. For the sake of specificity assume that $X\in U_Q^+\cap\Omega$ hence, in particular, 
	$U_Q^+\subset\om_{\sbf'}^+\subset \om$.  Note also that $U_Q^+$ is the only component of $U_Q$ meeting $V_Q^*$. All these together give at once that the conclusion of $H[a+b,1]$ holds in the present case.
	
	\smallskip
	
	\noindent{\bf Case 2b}:   $\sigma(F_0) < \frac12 \,\rho \sigma(Q)$.
	
	In this case  by \eqref{eq3.25} 
	\begin{equation}\label{eq3.33}
	\sigma(G_0)\,\geq \,\frac{\rho}{2}\, 
	\sigma(Q)\,.
	\end{equation}
	In addition, by the definition of $\F_{good}$ \eqref{eq3.27a}, and pigeon-holing, every $Q_j\in\F_{good}$ has a dyadic
	child $Q_j'$ (there could be more children satisfying this, but we just pick one) so that 
	\begin{equation}\label{eq3.27}
	\mut(\dd_{Q'_j}) \leq a\sigma(Q'_j)\,.
	\end{equation}
	Under the present assumptions $\theta=1$, that is, $\sigma(F_Q)=\sigma(Q)$ (see
	\eqref{eq3.10} and \eqref{defi-FQ}), 
	hence $\sigma(F_Q\cap Q_j')=\sigma(Q_j')$. 
	We apply Lemma \ref{lemma:VQ} (recall \eqref{eq:erwgte}) to obtain  $V_{Q_j'}\subset U_{Q_j'}\cap \Omega$ and 
	$F_{Q_j'}$ which satisfies $\sigma(F_{Q_j'})=\sigma(Q_j')$. That is, \eqref{eq3.10} holds 
	for $Q_j'$, with $\theta=1$. 
	Consequently, recalling that 
	$Q_j'\in\sbf\subset\G$ (see Claim \ref{claim3.16}),
	and applying the induction hypothesis
	to $Q_j'$, we find $V^*_{Q_j'}\subset V_{Q_j'}$, such 
	that for each $U_{Q_j'}^\pm$ meeting $V^*_{Q_j'}$, there is a chord-arc domain 
	$\Omega_{Q_j'}^\pm \supset U_{Q_j'}^\pm$ formed by a union of fattened Whitney cubes with $\Omega^\pm_{Q_j'}\subset B_{Q_j'}^*\cap\Omega$. 
	Moreover, since in particular, the cubes in $\F$ along with all of their children
	belong to the same stopping time regime $\sbf$ (see Claim \ref{claim3.16}),
	 the connected component $U_{Q_j}^\pm$ overlaps with
the corresponding component $U^\pm_{Q_j'}$ for its child,  so we may augment
$\Omega_{Q_j'}^\pm$ by adjoining to it the appropriate component $U_{Q_j}^\pm$, to form
a chord arc domain 
\begin{equation}\label{cadbuild}
\Omega^\pm_{Q_j} := \Omega_{Q_j'}^\pm \cup U_{Q_j}^\pm\,.
\end{equation}
Moreover, since  $K\gg 1$, and since $Q_j'\subset Q_j$, we have that
$B^*_{Q_j'} \subset B^*_{Q_j}$, hence $\Omega^\pm_{Q_j}\subset B_{Q_j}^*$ by construction.  

	
By a covering lemma argument, for a sufficiently large constant $\kappa\gg K^4$, 
	we may extract a 
	subcollection $\F_{good}^*\subset \F_{good}$ so that 
	$ \{ \kappa B_{Q_j}^*\}_{Q_j\in \F_{good}^*}$ is a pairwise disjoint family, and 
	\[
	\bigcup_{ Q_j\in \F_{good}} Q_j \subset\bigcup_{ Q_j\in \F_{good}^*} 5\kappa B_{Q_j}^*.
	\]
	In particular, by \eqref{eq3.33},
	\begin{equation}\label{G0*}
\sum_{ Q_j\in \F_{good}^*}\sigma( Q_j)
	\gtrsim_{\kappa,K}
\sum_{ Q_j\in \F_{good}}\sigma( Q_j) =\sigma(G_0)
	\gtrsim
\rho\sigma(Q),
	\end{equation}
	where the implicit constants depend  on ADR, $K$, and  
	the dilation factor $\kappa$. 



By the induction hypothesis, and by construction \eqref{cadbuild} and ADR,
\begin{equation}\label{eqx}
\sigma (Q_j\cap \pom_{Q_j})  \gtrsim \sigma(Q'_j)\gtrsim \sigma(Q_j)\,,
\end{equation}
where  $\Omega_{Q_j} $ is equal either to $\Omega_{Q_j}^+$ or to $\Omega_{Q_j}^-$
(if \eqref{eqx} holds for both choices, we arbitrarily set $\Omega_{Q_j}=\Omega_{Q_j}^+$).

Combining \eqref{eqx} with \eqref{G0*},
we obtain
\begin{equation}\label{eqxx}
\sum_{Q_j \in \F^*_{good}} \sigma (Q_j\cap \pom_{Q_j})  \gtrsim \sigma(Q)\,.
\end{equation}

We now assign each $Q_j\in \F_{good}^*$ either to
$\F^*_+$ or to $\F^*_-$, depending on whether we chose $\Omega_{Q_j}$ satisfying
\eqref{eqx} to be $\Omega_{Q_j}^+$, or $\Omega_{Q_j}^-$.  
We note that at least one of the sub-collections $\F^*_\pm$ 
is non-empty, since
for each $j$, there was at least one choice of ``+' or ``-" such that
\eqref{eqx} holds for the corresponding choice of $\Omega_{Q_j}$.  Moreover, the 
two collections are disjoint, since we have arbitrarily designated $\Omega_{Q_j} = \Omega_{Q_j}^+$
in the case that there were two choices for a particular $Q_j$.


	To proceed, as in Lemma \ref{lemmaCAD} we set 
	\[
	\sbf^*=\left\{Q'\in \dd_Q: Q_j\subset Q' \text{ for some }Q_j\in\F_{good}^*\right\}
	\] 
which is semi-coherent by construction. 
For $\F^*_\pm$ non-empty, we now define
\begin{equation}\label{defi:Omega-Q-pm}
	\Omega_{Q}^{\pm}
	=
	\Omega_{\sbf^*}^{\pm}\bigcup 
	\Big(\bigcup_{Q_j\in \F_{\pm}^{*}} \Omega_{Q_j}\Big).
	\end{equation}
Observe that by the induction hypothesis, 
 and our construction (see \eqref{cadbuild} and the ensuing comment),
 for an appropriate choice of $\pm$,
 $U_{Q_j}^\pm\subset \Omega_{Q_j} \subset B_{Q_j}^*$, and since $\ell(Q_j) \leq 2^{-k_1} \ell(Q)$,
by \eqref{eqxx} and Lemma  \ref{lemmaCAD},  with $\F^*=\F^*_{good}$,
each (non-empty) choice of $\Omega_Q^\pm$
defines a chord-arc domain with the requisite properties.

	Thus, we have proved Claim \ref{claim3.18}
	and therefore, as noted above, it follows that $H[M_0,1]$ holds.
\end{proof}

\subsection{Step 3:  bootstrapping $\theta$} In this last step, we shall prove 
that there is a uniform constant $\zeta\in (0,1)$ such that for each $\theta\in (0,1]$,
$H[M_0,\theta]\implies H[M_0,\zeta\theta]$.  Since we have already established
$H[M_0,1]$, we then conclude that $H[M_0,\theta_1]$ holds for any given
$\theta_1\in (0,1]$.  As noted above, it then follows that Theorem
\ref{t1} holds, as desired.

In turn, it will be enough to verify the following.  
\begin{claim}\label{claim3.52}   There is a uniform constant $\beta \in(0,1)$ such that
for every $a\in [0,M_0)$, $\theta\in (0,1]$, $\vartheta\in (0,1)$, and $b=1/(2C)$ as in Step 2/Proof of Claim \ref{claim3.18}, 
if $H[M_0,\theta]$ holds, then
$$H[a,(1-\vartheta)\theta]\implies H[a+b, (1-\vartheta\beta)\theta]\,.$$
\end{claim}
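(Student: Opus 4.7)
The plan is to mirror the proof of Claim~\ref{claim3.18}, invoking the Corona Lemma~\ref{lemma:Corona} to trade the Carleson mass of $\mut$ against the measure of discarded ``bad'' cubes, while inserting a density-based refinement that exploits the slack created by replacing $\vartheta$ with $\vartheta\beta$. Fix $Q$ with $\mut(\dd_Q)\leq (a+b)\sigma(Q)$ and $V_Q\subset U_Q\cap\Omega$ with $\sigma(F_Q)\geq (1-\vartheta\beta)\theta\,\sigma(Q)$, and let $k_1>k_0$ be the scale parameter from Claim~\ref{claim3.18}. First dispatch the easy dichotomy: if some $Q'\in \dd_Q^{k_1}$ satisfies $\sigma(F_Q\cap Q')\geq \theta\,\sigma(Q')$, then since $\mut(\dd_{Q'})\leq M_0\,\sigma(Q')$ always holds, invoke the side hypothesis $H[M_0,\theta]$ on $Q'$ with $V_{Q'}$ produced by Lemma~\ref{lemma:VQ}, and augment the output CADs by fattened Whitney tubes along the connecting $\lambda$-carrot paths and by $U_Q^i$, precisely as in Case~1 of Claim~\ref{claim3.18}; this yields the desired conclusion with $c_{a+b}((1-\vartheta\beta)\theta)\gtrsim 2^{-n k_1}c_{M_0}(\theta)$.

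Henceforth $\sigma(F_Q\cap Q')<\theta\,\sigma(Q')$ for every $Q'\in \dd_Q^{k_1}$. Apply Lemma~\ref{lemma:Corona} with the given $a$ and $b=1/(2C)$ to obtain the family $\F=\{Q_j\}$, giving $\|\mut_{\F}\|_{\C(Q)}\leq 1/2$, the stopping-regime structure of Claim~\ref{claim3.16} (so $\mut(\dd_{Q_j})\leq a\,\sigma(Q_j)$ for every $Q_j\in \F_{good}$), and $\sigma(\bigcup\F_{bad})\leq (1-\rho)\sigma(Q)$ with $\rho:=b/(M_0+2b)$. Set $F_0:=Q\setminus\bigcup_{\F}Q_j$ and partition $\F$ by the density of $F_Q$:
\[
\F^{H}:=\{Q_j\in\F:\sigma(F_Q\cap Q_j)\geq \theta\,\sigma(Q_j)\},\quad
\F^{M}:=\{Q_j\in\F_{good}\setminus\F^{H}:\sigma(F_Q\cap Q_j)\geq (1-\vartheta)\theta\,\sigma(Q_j)\}.
\]
The current case assumption forces $\ell(Q_j)\leq 2^{-k_1}\ell(Q)$ for every $Q_j\in\F^{H}$ (a pigeonhole on descendants at scale $2^{-k_1}\ell(Q)$ would otherwise contradict the assumption), and by a further refinement of $\F$ at scale $2^{-k_1}\ell(Q)$ the same bound may be assumed for every $Q_j\in\F^{M}$, so that Lemma~\ref{lemma:VQ} applies uniformly.

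Writing $F:=\sigma(F_0)$, $H:=\sigma(\bigcup\F^{H})$, $M:=\sigma(\bigcup\F^{M})$, and bounding
\[
\sigma(F_Q)\leq F+H+M+(1-\vartheta)\theta\,\sigma\bigl(\textstyle\bigcup(\F_{good}\setminus(\F^{H}\cup\F^{M}))\bigr)+\theta\,\sigma\bigl(\textstyle\bigcup(\F_{bad}\setminus\F^{H})\bigr),
\]
the partition identity together with $\sigma(\bigcup\F_{bad})\leq (1-\rho)\sigma(Q)$ yields after brief algebra
\[
F+H+M\geq \vartheta\theta(\rho-\beta)\,\sigma(Q)
\]
for any $\beta\in(0,\rho)$. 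Choose $\beta:=\rho/2$. By pigeonhole, at least one of $F,H,M$ exceeds $\vartheta\theta(\rho-\beta)/3\cdot\sigma(Q)$, and I handle each case: when $F$ dominates, use the CAD $\Omega_{\sbf'}^{\pm}$ for $\sbf':=\dd_{\F}\cap\dd_Q$ (semi-coherent by Claim~\ref{claim3.16}) as in Case~2a of Claim~\ref{claim3.18}, exploiting $\pom_{\sbf'}^{\pm}\cap Q\supset F_0$; when $H$ dominates, apply $H[M_0,\theta]$ to each $Q_j\in\F^{H}$ with $V_{Q_j}$ supplied by Lemma~\ref{lemma:VQ}, extract a $\kappa$-disjoint subfamily by a covering argument, and glue via Lemma~\ref{lemmaCAD} as in Case~2b of Claim~\ref{claim3.18}; when $M$ dominates, proceed analogously using $H[a,(1-\vartheta)\theta]$ on each $Q_j\in\F^{M}$ (permissible because $\mut(\dd_{Q_j})\leq a\,\sigma(Q_j)$ for good cubes). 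Define $c_{a+b}((1-\vartheta\beta)\theta)$ as the minimum of the three resulting constants.

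The principal obstacle lies in the algebraic accounting supporting the displayed lower bound on $F+H+M$: since $\sigma(\bigcup\F_{bad})$ may be as large as $(1-\rho)\sigma(Q)$, which for small $\theta$ dwarfs the density $\sim\theta\,\sigma(Q)$ of $F_Q$, only the cancellation $\vartheta\theta(1-\beta)-\vartheta\theta(1-\rho)=\vartheta\theta(\rho-\beta)$ afforded by the strict inequality $\beta<\rho$ salvages a usable estimate. A secondary technical point is to ensure Lemma~\ref{lemma:VQ} applies to every cube on which an inductive hypothesis is invoked; this is handled by the scale restriction forced by the first dichotomy together with the refinement of $\F$ at scale $2^{-k_1}\ell(Q)$.
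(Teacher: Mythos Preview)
Your overall strategy---Corona lemma, density-threshold partition of $\F$, pigeonhole among $F_0$, a ``high-density'' family, and a ``medium-density'' family, then glue via Lemma~\ref{lemmaCAD}---is close in spirit to the paper's, and your algebraic estimate $F+H+M\gtrsim \vartheta\theta(\rho-\beta)\sigma(Q)$ is essentially correct. However, there is a genuine gap in the branch where $M$ dominates.

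You assert that $\mut(\dd_{Q_j})\leq a\,\sigma(Q_j)$ for every $Q_j\in\F_{good}$, but Lemma~\ref{lemma:Corona} only gives $\mut(\dd_{Q_j}\setminus\{Q_j\})\leq a\,\sigma(Q_j)$; the hypothesis of $H[a,(1-\vartheta)\theta]$ therefore does \emph{not} apply directly to $Q_j$. One must pass (by pigeonholing) to a dyadic child $Q_j'$ with $\mut(\dd_{Q_j'})\leq a\,\sigma(Q_j')$, and there is no reason this particular child inherits the density bound $\sigma(F_Q\cap Q_j')\geq (1-\vartheta)\theta\,\sigma(Q_j')$. The paper confronts exactly this obstruction (see the argument surrounding \eqref{eq3.80}--\eqref{eq3.82}): if the selected child fails the density test, then for $\beta$ chosen small enough (depending on the dyadic doubling constant) some \emph{other} child $Q_j''$ must satisfy $\sigma(F_Q\cap Q_j'')\geq\theta\,\sigma(Q_j'')$, and one invokes $H[M_0,\theta]$ there instead. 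Your proof omits this dichotomy entirely.

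A second manifestation of the same issue is your ``further refinement of $\F$ at scale $2^{-k_1}\ell(Q)$'' to force $\ell(Q_j)\leq 2^{-k_1}\ell(Q)$ for $Q_j\in\F^M$. This is not innocuous: replacing a large good $Q_j$ by its descendants at scale $2^{-k_1}$ destroys both the Carleson control (a descendant of a good cube can be bad) and the $\F^M$ density condition. The paper avoids this by making its \emph{first} dichotomy on $\mut$ rather than on $F_Q$-density (Case~1: some $Q'\in\dd_Q^{k_1}$ has $\mut(\dd_{Q'})\leq a\sigma(Q')$; Case~2: none does), which in Case~2 automatically forces every $Q_j\in\F_{good}$ to satisfy $\ell(Q_j)\leq 2^{-k_1}\ell(Q)$ (see \eqref{eq:erwgte}). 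Your alternative first dichotomy on density is elegant, but it leaves you without the scale restriction for $\F^M$, and ``refinement'' does not recover it. To repair the argument you would need, for each large $Q_j\in\F^M$, to locate a descendant at scale $2^{-k_1}$ with $\mut\leq a$ and then run the child-level density dichotomy above---effectively reinstating the paper's Case~1a/1b analysis inside your Case~$M$.
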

Let us momentarily take Claim \ref{claim3.52} for granted.  Recall that by Claim \ref{claim3.17},
$H[0,\theta]$ holds for all $\theta\in (0,1]$.  In particular, given $\theta \in (0,1]$ fixed, for
which $H[M_0,\theta]$ holds, we have that
$H[0,\theta/2]$ holds.    
Combining the latter fact with Claim \ref{claim3.52}, and iterating,
we obtain that $H[kb, (1-2^{-1}\beta^k)\theta]$ holds.  We eventually reach
$H[M_0,(1-2^{-1}\beta^\nu)\theta]$, with $\nu \approx M_0/b$.  The conclusion of Step 3 now follows,
with $\zeta := 1-2^{-1}\beta^\nu$.

\begin{proof}[Proof of Claim \ref{claim3.52}]
The proof will be a refinement of that of Claim \ref{claim3.18}.  We are given some $\theta\in(0,1]$ such that
$H[M_0,\theta]$ holds, and
we assume that $H[a,(1-\vartheta)\theta]$ holds, for some  $a\in[0,M_0)$ and $\vartheta\in (0,1)$.
Set $b= 1/(2C)$, where as before $C$ is the constant in
\eqref{Corona-sawtooth}.
Consider a cube $Q\in \dd(\pom)$ with
$\mut(\dd_Q) \leq (a+b) \sigma(Q)$.   Suppose that there is a set $V_Q\subset U_Q\cap\om$ such that
\eqref{eq3.10} holds with $\theta$ replaced by
$(1-\vartheta\beta)\theta$, for some $\beta\in (0,1)$ to be determined.
Our goal is to show that for a sufficiently small, but uniform choice of $\beta$,
we may deduce the conclusion of the induction hypothesis, with $\C_{a+b}, c_{a+b}$ in place of 
$C_a,c_a$.

By assumption, and recalling the definition of $F_Q$ in \eqref{defi-FQ}, 
we have that \eqref{eq3.10} holds with constant $(1-\vartheta\beta)\theta$, i.e.,
\begin{equation}\label{eq3.56}
\sigma(F_Q) \geq (1-\vartheta\beta)\theta \sigma(Q)\,.
\end{equation}

As in Step 2, we fix $k_1>k_0$ (see \eqref{eq3.15}) large 
enough so that $2^{k_1}> 100K$. 
There are two principal cases.  The first is as follows.

\smallskip


\noindent{\bf Case 1}: There exists $Q'\in\dd_{Q}^{k_1}$ (see \eqref{eq3.4aaa}) with  
$\mut(\dd_{Q'}) \le a\sigma(Q')$.


We split Case 1 into two subcases.

\smallskip

\noindent{\bf Case 1a}:  $\sigma(F_Q\cap Q') \geq (1-\vartheta)\theta \sigma(Q')$.

In this case, we follow the Case 1 argument for $\theta=1$ in 
Subsection \ref{ss3.2} {\it mutatis mutandis}, so we merely sketch the proof.
By Lemma \ref{lemma:VQ}, we may construct $V_{Q'}$ and $F_{Q'}$ so that
$F_Q\cap Q' = F_{Q'}$ and hence $\sigma(F_{Q'})\ge (1-\vartheta)\theta \sigma(Q')$. 
We may then apply the induction hypothesis
$H[a,(1-\vartheta)\theta]$ in $Q'$, and then proceed exactly as in Case 1 of Step 2  to construct
a subset $V_Q^*\subset V_Q$ and a family of chord-arc domains $\Omega_Q^i$ satisfying the
various desired properties, and such that
$$\sum_{i:  U_{Q}^i \text{ meets } V^*_{Q}}\sigma (\pom^i_{Q}\cap Q)\geq c_a\sigma(Q')
\gtrsim_{k_1}c_a \sigma(Q)\,.$$

The conclusion of $H[a+b, (1-\vartheta\beta)\theta]$ then holds in the present scenario.

\smallskip

\noindent{\bf Case 1b}:  $\sigma(F_Q\cap Q') < (1-\vartheta)\theta \sigma(Q')$.

By \eqref{eq3.56}
$$(1-\vartheta\beta)\theta \sigma(Q)\, \leq \,\sigma(F_Q) \,=\, \sigma(F_Q\cap Q') + 
\sum_{Q''\in\dd_Q^{k_1}\setminus\{Q'\}}\sigma(F_Q\cap Q'')\,.$$
In the scenario of Case 1b, this leads to
\begin{multline*}
(1-\vartheta\beta)\theta \sigma(Q')+ (1-\vartheta\beta)\theta
\sum_{Q''\in\dd_Q^{k_1}\setminus\{Q'\}}\sigma(Q'')=
(1-\vartheta\beta)\theta \sigma(Q)\\[4pt] \leq \, (1-\vartheta)\theta \sigma(Q') + 
\sum_{Q''\in\dd_Q^{k_1}\setminus\{Q'\}}\sigma(F_Q\cap Q'')\,,
\end{multline*}
that is,
\begin{equation}\label{eq3.60}
(1-\beta)\vartheta\theta \sigma(Q')+ (1-\vartheta\beta)\theta
\sum_{Q''\in\dd_Q^{k_1}\setminus\{Q'\}}\sigma(Q'')\,\leq\, \sum_{Q''\in\dd_Q^{k_1}\setminus\{Q'\}}\sigma(F_Q\cap Q'')\,.
\end{equation}
Note that we have the dyadic doubling estimate
$$\sum_{Q''\in\dd_Q^{k_1}\setminus\{Q'\}}\sigma(Q'') \leq \sigma(Q) \leq M_1 \sigma(Q')\,,$$
where $M_1=M_1(k_1,n, ADR)$.  Combining this estimate with \eqref{eq3.60}, we obtain
$$
\left[(1-\beta)\frac{\vartheta}{M_1} + (1-\vartheta\beta)\right] \theta\sum_{Q''\in\dd_Q^{k_1}
\setminus\{Q'\}}\sigma(Q'')
\leq \sum_{Q''\in\dd_Q^{k_1}\setminus\{Q'\}}\sigma(F_Q\cap Q'')\,.$$
We now choose $\beta\leq 1/(M_1+1)$, so that $(1-\beta)/M_1 \geq \beta$, and therefore the
expression in square brackets is at least 1.  Consequently, by pigeon-holing, there exists
a particular $Q''_0\in \dd_Q^{k_1}\setminus\{Q'\}$ such that 
\begin{equation}\label{eq3.62}
\theta\sigma(Q''_0) \leq \sigma(F_Q \cap Q_0'')\,.
\end{equation}
By Lemma \ref{lemma:VQ}, we can find $V_{Q''_0}$ such that
$F_Q\cap Q_0'' = F_{Q_0''}$,  where the latter is defined as in \eqref{defi-FQ}, with
$Q_0''$ in place of $Q$.  By assumption, $H[M_0,\theta]$ holds, so combining
 \eqref{eq3.62} with the fact that \eqref{eq3.12} holds with
$a=M_0$ for every $Q\in\dd(\pom)$, we find that there exists a subset 
$V^*_{Q''_0}\subset V_{Q''_0}$, along with a family of chord-arc domains 
$\{\Omega_{Q_0''}^i\}_i$ enjoying all the appropriate properties relative to $Q_0''$.
Using that $\ell(Q_0'')\approx_{k_1} \ell(Q)$, we may now proceed
exactly as in Case 1a above, and also Case 1 in Step 2,
to construct $V_Q^*$ and $\{\Omega_Q^i\}_i$ such that the conclusion of 
$H[a+b, (1-\vartheta\beta)\theta]$ holds in the present case also.

\smallskip

\noindent{\bf Case 2}:  $\mut(\dd_{Q'}) > a\sigma(Q')$ for every $Q'\in\dd_{Q}^{k_1}$.

In this case, we apply Lemma \ref{lemma:Corona}
to obtain a pairwise disjoint family $\F=\{Q_j\}\subset \dd_Q$ such 
that \eqref{Corona-sawtooth} and \eqref{Corona-bad-cubes} hold.
In particular, by our choice of $b=1/(2C)$,
$\|\mut_\F\|_{\C(Q)}\leq 1/2$. 

Recall that 
$F_Q$ is defined in \eqref{defi-FQ}, and satisfies \eqref{eq3.56}.

We define $F_0  =Q\setminus (\bigcup_\F Q_j)$ as in \eqref{eq3.27aa},
and $\F_{good}:= \F\setminus \F_{bad}$ as in \eqref{eq3.27a}.
Let
$G_0:= \bigcup_{\F_{good}}Q_j$.  Then as above (see \eqref{eq3.25}),
\begin{equation}\label{eq3.63}
\sigma(F_0 \cup G_0 )\, \geq \,\rho\sigma(Q)\,,
\end{equation}
where again $\rho=\rho(M_0,b)\in (0,1)$ is defined as in \eqref{eq3.42aa}.  Just 
as in Case 2 for $\theta=1$ in 
Subsection \ref{ss3.2}, we have that
\begin{equation}
\ell(Q_j)\le 2^{-k_1}\,\ell(Q),
\quad
\forall\,Q_j\in\F_{good},
\qquad
\mbox{and}
\qquad
\F\subset\dd_Q\setminus\{Q\}
\label{eq:wefwerfr}
\end{equation}
(see \eqref{eq:erwgte}).
Hence,  the conclusions of Claim \ref{claim3.16} hold.

We first observe that if
$\sigma(F_0) \geq \eps \sigma(Q)$, for some $\eps>0$ to be chosen
(depending on allowable parameters), then 
the desired conclusion holds.  Indeed, in this case, we may proceed exactly as in
the analogous scenario in Case 2a of Step 2:  the promised chord-arc domain is again
simply one of $\Omega_S^\pm$, since at least one of these contains a point in $V_Q$ 
and hence in particular is a subdomain of $\Omega$.
The constant $c_{a+b}$ in our conclusion will depend on $\eps$, but in the end this will be harmless,
since $\eps$ will be chosen to depend only on allowable parameters.

We may therefore suppose that 
\begin{equation}\label{eq3.65}
\sigma(F_0) < \eps \sigma(Q)\,.
\end{equation}
Next, we refine the decomposition $\F = \F_{good}\cup \F_{bad}$. 
With $\rho$ as in \eqref{eq3.42aa} and \eqref{eq3.63}, we choose $\beta< \rho/4$.
Set
$$\F^{(1)}_{good}:= \left\{Q_j\in\F_{good}:\, 
\sigma(F_Q\cap Q_j)\geq \big(1-4\vartheta\beta\rho^{-1}\big)\theta\sigma(Q_j)\right\}\,,$$
and define $\F^{(2)}_{good}:= \F_{good}\setminus \F_{good}^{(1)}$.
Let
$$\F^{(1)}_{bad}:= \left\{Q_j\in\F_{bad}:\,
\sigma(F_Q\cap Q_j)\geq \theta\sigma(Q_j)\right\}\,,$$
and define $\F^{(2)}_{bad}:= \F_{bad}\setminus \F_{bad}^{(1)}$.

We split the remaining part of Case 2 into two subcases.  The first of these will be easy,
based on our previous arguments.

\smallskip

\noindent{\bf Case 2a}:  There is $Q_j\in \F^{(1)}_{bad}$ such that $\ell(Q_j)>2^{-k_1}\,\ell(Q)$.

By definition of $\F_{bad}^{(1)}$, $\sigma(F_Q\cap Q_j)\geq \theta\sigma(Q_j)$. By 
pigeon-holing, $Q_j$ has a descendant
$Q'$ with $\ell(Q') = 2^{-k_1}\ell(Q)$, such that $\sigma(F_Q\cap Q')\geq \theta\sigma(Q')$. 
We may then  apply $H[M_0,\theta]$ in $Q'$, and proceed exactly as we did in Case 1b above with
the cube $Q_0''$, which enjoyed precisely the same properties as does our current $Q'$.
Thus, we draw the desired conclusion in the present case.

\smallskip

The main case is the following.

\noindent{\bf Case 2b}:  Every $Q_j\in \F^{(1)}_{bad}$ satisfies $\ell(Q_j)\le 2^{-k_1}\,\ell(Q)$.

Observe that by definition,
\begin{equation}\label{eq3.67}
\sigma(F_Q \cap Q_j) \leq \big(1-4\vartheta\beta\rho^{-1}\big)\theta \sigma(Q_j)\,,\qquad \forall \, Q_j \in \F^{(2)}_{good}\,,
\end{equation}
and also
\begin{equation}\label{eq3.68a}
\sigma(F_Q \cap Q_j) \leq \theta \sigma(Q_j)\,,\qquad \forall \, Q_j \in \F^{(2)}_{bad}\,,
\end{equation}

Set $\F_*:=\F\setminus \F_{good}^{(2)}$.  For future reference, we shall derive a certain ampleness estimate for
the cubes in $\F_*$.

By \eqref{eq3.56},
\begin{multline}\label{eq3.68}
(1-\vartheta\beta)\theta \sigma(Q)
\leq \, 
\sigma(F_Q)\, \leq \, \sigma(F_0) + \sum_{\F_*}\sigma(Q_j) + \sum_{\F_{good}^{(2)}}\sigma(F_Q\cap Q_j)\\[4pt]
\leq\, \eps\sigma(Q) + \sum_{\F_*}\sigma(Q_j) +
\left(1-4\vartheta\beta\rho^{-1}\right)\theta \sigma(Q)
\,,
\end{multline}
where in the last step have used \eqref{eq3.65} and \eqref{eq3.67}. Observe that
\begin{equation}\label{eq3.70a}
(1-\vartheta\beta)\theta = \left(4\rho^{-1}-1\right)\vartheta\beta\theta +\left(1-4\vartheta\beta\rho^{-1}\right)\theta \,.
\end{equation}
Using \eqref{eq3.68} and \eqref{eq3.70a}, for $\eps \ll\big(4\rho^{-1}-1\big)\vartheta\beta\theta$,
we obtain
$$
2^{-1}\left(4\rho^{-1}-1\right)\vartheta\beta\theta\sigma(Q)\le \sum_{\F_*}\sigma(Q_j)
$$
and thus
\begin{equation}\label{eq3.69}
\sigma(Q) \leq \,C(\vartheta,\rho,\beta,\theta)  \sum_{\F_*}\sigma(Q_j)\,.
\end{equation}


We now make the following claim.
\begin{claim}\label{claim3.70} For $\eps$ chosen sufficiently small,
$$\max\left(\sum_{\F_{good}^{(1)}}\sigma(Q_j)\,,\sum_{\F_{bad}^{(1)}}\sigma(Q_j)\right)\geq \eps\sigma(Q)\,.$$
\end{claim}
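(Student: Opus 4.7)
The plan is to argue by contradiction: assume that both $A := \sum_{Q_j\in\F^{(1)}_{good}}\sigma(Q_j) < \eps\sigma(Q)$ and $B := \sum_{Q_j\in\F^{(1)}_{bad}}\sigma(Q_j) < \eps\sigma(Q)$ simultaneously, and set $C := \sum_{Q_j\in\F^{(2)}_{good}}\sigma(Q_j)$, $D := \sum_{Q_j\in\F^{(2)}_{bad}}\sigma(Q_j)$. Since $F_0$ together with the cubes of $\F$ partition $Q$ disjointly, one has the identity $\sigma(F_0) + A + B + C + D = \sigma(Q)$, and by \eqref{eq3.65} the first summand is below $\eps\sigma(Q)$. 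The goal is to derive a forced lower bound on $D$ that conflicts with the Corona ceiling $B + D \le (1-\rho)\sigma(Q)$ provided by \eqref{Corona-bad-cubes}--\eqref{eq3.42aa}.

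The first main step is to estimate $\sigma(F_Q)$ from above by writing $F_Q$ as the disjoint union of $F_Q\cap F_0$ and $\{F_Q\cap Q_j\}_{Q_j\in\F}$. On the two type-$(1)$ subfamilies I use only the trivial bound $\sigma(F_Q\cap Q_j)\le \sigma(Q_j)$, whose total under the contradiction hypothesis is bounded by $A+B < 2\eps\sigma(Q)$; on $\F^{(2)}_{good}$ I use the defining inequality \eqref{eq3.67}, which gives a gain of a factor $(1-\alpha)$ with $\alpha := 4\vartheta\beta/\rho$; on $\F^{(2)}_{bad}$ I use \eqref{eq3.68a}. Pairing this with the lower bound \eqref{eq3.56} on $\sigma(F_Q)$ yields
\[
(1-\vartheta\beta)\theta\,\sigma(Q)\, \le\, 3\eps\,\sigma(Q) + (1-\alpha)\theta\,C + \theta\,D.
\]
The second main step is to eliminate $C$ using the partition identity $C = \sigma(Q)-\sigma(F_0)-A-B-D$; after substitution the coefficient of $D$ shifts from $(1-\alpha)\theta+\theta$ to $\alpha\theta$, and discarding the resulting nonpositive term $-(1-\alpha)\theta(\sigma(F_0)+A+B)$ reduces the displayed inequality to
\[
(\alpha-\vartheta\beta)\theta\,\sigma(Q)\, \le\, 3\eps\,\sigma(Q) + \alpha\theta\, D.
\]

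Since we have already chosen $\beta < \rho/4$, one has $\vartheta\beta/\alpha = \rho/4$, hence $(\alpha-\vartheta\beta)/\alpha = 1-\rho/4$, and dividing by $\alpha\theta$ gives
\[
D \,\ge\, \left(1-\tfrac{\rho}{4}\right)\sigma(Q)\, -\, \tfrac{3\eps\,\rho}{4\vartheta\beta\theta}\,\sigma(Q).
\]
For any $\eps < \vartheta\beta\theta$ (up to an absolute factor), the right-hand side strictly exceeds $(1-\rho)\sigma(Q)$, contradicting $D \le B+D \le (1-\rho)\sigma(Q)$ and closing the argument. The main obstacle is the algebraic balancing in the elimination step: naively bounding $C \le \sigma(Q)$ and $D \le (1-\rho)\sigma(Q)$ independently is too lossy, and one must use the \emph{exact} partition identity to trade $C$ for $D$, converting the $(1-\alpha)$ versus $1$ gap between the two type-$(2)$ subclasses into the effective coefficient $\alpha-\vartheta\beta > 0$. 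The choice $\beta < \rho/4$ made earlier in the proof of Claim~\ref{claim3.52} is precisely what ensures that the resulting ``forbidden'' value $(1-\rho/4)\sigma(Q)$ beats the Corona ceiling $(1-\rho)\sigma(Q)$; any less-favorable ratio between the two would not yield the contradiction.
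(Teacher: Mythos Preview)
Your argument is correct and uses the same ingredients as the paper's proof: the lower bound \eqref{eq3.56}, the upper bounds \eqref{eq3.67}--\eqref{eq3.68a} on the type-$(2)$ subfamilies, and the Corona estimate \eqref{Corona-bad-cubes}. The only organizational difference is in how the Corona information is deployed: the paper bounds $C=\sum_{\F^{(2)}_{good}}\sigma(Q_j)$ from \emph{below} by $\tfrac{\rho}{2}\sigma(Q)$ (via \eqref{eq3.63}, \eqref{eq3.65}, \eqref{eq3.71}) and bounds $D$ trivially by $\sigma(Q)$, then isolates $\sum_{\F^{(1)}_{bad}}\sigma(Q_j)$ directly; you instead use the exact partition identity to eliminate $C$ and then invoke the dual inequality $D\le (1-\rho)\sigma(Q)$ for the contradiction. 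Since \eqref{eq3.63} and the bound $B+D\le(1-\rho)\sigma(Q)$ are equivalent via the partition identity, the two arguments are essentially reformulations of one another, and your algebra is slightly cleaner.
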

\begin{proof}[Proof of Claim \ref{claim3.70}]
If $\sum_{\F_{good}^{(1)}}\sigma(Q_j)\geq \eps\sigma(Q)$, then we are done.
Therefore, suppose that
\begin{equation}\label{eq3.71}
\sum_{\F_{good}^{(1)}}\sigma(Q_j)< \eps\sigma(Q)\,.
\end{equation}
We have made the decomposition 
\begin{equation}\label{eq3.74aa}
\F = \F_{good}^{(1)}\cup \F_{good}^{(2)} \cup \F^{(1)}_{bad} \cup\F_{bad}^{(2)}. 
\end{equation} 
Consequently
$$
\sigma(F_Q) \,\le\, 
  \sum_{\F_{good}^{(2)}}\sigma(F_Q\cap Q_j)
+  \sum_{\F_{bad}}\sigma(F_Q\cap Q_j)
+
O\left(\eps\sigma(Q)\right)
\,, 
$$
where we have used \eqref{eq3.65}, and 
\eqref{eq3.71} to estimate the contributions of $F_0$,  and of
$\F_{good}^{(1)}$, respectively.  This,  \eqref{eq3.56}, \eqref{eq3.67}, and \eqref{eq3.68a} yield
\begin{multline*}
(1-\vartheta\beta)\theta \left( \sum_{\F_{good}^{(2)}}\sigma(Q_j)
+  \sum_{\F_{bad}^{(2)}}\sigma(Q_j) \right)
\leq \, (1-\vartheta\beta)\theta \sigma(Q)\leq\,\sigma(F_Q) 
\\[4pt]
\leq\,
\left(1-4\vartheta\beta\rho^{-1}\right)\theta  \sum_{\F_{good}^{(2)}}\sigma(Q_j)
+ \sum_{\F_{bad}^{(1)}}\sigma(Q_j)
+\theta\sum_{\F_{bad}^{(2)}}\sigma(Q_j) 
+ O\left(\eps\sigma(Q)\right)
\,.
\end{multline*}
In turn, applying  \eqref{eq3.70a} in the latter estimate, and rearranging terms,
we obtain
\begin{equation}\label{eq3.74}
(4\rho^{-1} -1)\vartheta\beta\theta  \sum_{\F_{good}^{(2)}}\sigma(Q_j)
- \vartheta\beta\theta \sum_{\F_{bad}^{(2)}}\sigma(Q_j) 
\leq\,
 \sum_{\F_{bad}^{(1)}}\sigma(Q_j)
+ O\left(\eps\sigma(Q)\right)
\,.
\end{equation}
Recalling that $G_0=\cup_{\F_{good}}Q_j$, and that
$\F_{good}= \F^{(1)}_{good} \cup \F^{(2)}_{good}$,
we further note that by \eqref{eq3.63}, choosing $\eps \ll \rho$, and using
\eqref{eq3.65} and 
\eqref{eq3.71}, we find in particular that
\begin{equation}\label{eq3.75} 
\sum_{\F_{good}^{(2)}}\sigma(Q_j) \,\geq\, \frac{\rho}{2} \sigma(Q). 
\end{equation}
Applying  
\eqref{eq3.75} and the trivial estimate
$ \sum_{\F_{bad}^{(2)}}\sigma(Q_j)\leq\sigma(Q)$ in \eqref{eq3.74}, we then have 
\begin{multline*}
\vartheta\beta\theta\left[1-\frac{\rho}{2}\right]\,\sigma(Q)
=
\left[ \big(4\rho^{-1}-1\big)\vartheta\beta\theta\frac{\rho}{2} -\vartheta\beta\theta \right]\,\sigma(Q)
\\
\leq\, 
\big(4\rho^{-1}-1\big)\vartheta\beta\theta\sum_{\F_{good}^{(2)}}\sigma(Q_j) 
-\vartheta\beta\theta \sum_{\F_{nbad}^{(2)}}\sigma(Q_j) 
\le
\sum_{\F_{bad}^{(1)}}\sigma(Q_j) 
+ O\left(\eps\sigma(Q)\right)\,.
\end{multline*}
Since $\rho<1$, we conclude, 
for $\eps\le (4C)^{-1}\vartheta\beta\theta$, that 
$$
\frac14\vartheta\beta\theta\, \sigma(Q)
\,\le\,
\sum_{\F_{bad}^{(1)}}\sigma(Q_j)\,,
$$
and
Claim \ref{claim3.70}  follows.
\end{proof}

With Claim \ref{claim3.70} in hand, let us return to the proof of Case 2b of Claim \ref{claim3.52}.

We begin by noting that by definition of $\F_{bad}^{(1)}$, and Lemma \ref{lemma:VQ}, we can apply 
$H[M_0,\theta]$ to any $Q_j \in \F_{bad}^{(1)}$, hence for each
such $Q_j$ there is a family of chord-arc domains $\{\Omega^i_{Q_j}\}_i$ satisfying the desired properties.

Now consider $Q_j\in\F^{(1)}_{good}$.  Since $\F^{(1)}_{good}\subset \F_{good}$, 
by pigeon-holing $Q_j$ has a dyadic
child $Q_j'$  satisfying 
\begin{equation}\label{eq3.60*}
\mut(\dd_{Q'_j}) \leq a\sigma(Q'_j)\,,
\end{equation}
(there may be more than one such child, but we just pick one).
Our immediate goal is to find a child $Q_j''$ of $Q_j$, which may or may not equal $Q_j'$,
for which we may construct a family of chord-arc domains  $\{\Omega^i_{Q''_j}\}_i$ satisfying the desired properties.
To this end, we assume first that $Q_j'$ satisfies 
\begin{equation}\label{eq3.61}
\sigma(F_Q\cap Q_j') \ge  (1-\vartheta)\theta \sigma (Q_j')\,.
\end{equation}
In this case, we set $Q_j'':=Q_j'$, and using Lemma \ref{lemma:VQ},
by the induction hypothesis
$H[a,(1-\vartheta)\theta]$, we obtain the desired family of chord-arc domains.

We therefore consider the case 
\begin{equation}\label{eq3.80}
\sigma(F_Q\cap Q_j') < (1-\vartheta)\theta \sigma (Q_j')\,.
\end{equation}
In this case, we shall select $Q_j''\neq Q_j'$.
Recall that we use the notation $Q''\lhd Q$ to mean that $Q''$ is a dyadic child of $Q$.
Set $$\F_j'':=\left\{Q_j''\lhd Q_j: \, Q_j''\neq Q_j'\right\}\,.$$
Note that we have the dyadic doubling estimate
\begin{equation}
\sum_{Q_j''\in \F_j''}\sigma(Q_j'') \leq \sigma(Q_j) \leq M_1 \sigma(Q_j')\,,
\label{eq:dd-est}
\end{equation}
where $M_1=M_1(n, ADR)$. 
We also note that
\begin{equation}\label{eq3.81}
\big(1-4\vartheta\beta\rho^{-1}\big) \theta= \big(1-4\beta\rho^{-1}\big)\vartheta\theta + (1-\vartheta)\theta\,.
\end{equation}
By definition of $\F_{good}^{(1)}$, 
$$\big(1-4\vartheta\beta\rho^{-1}\big) \theta \sigma(Q_j)\, \leq \,\sigma(F_Q\cap Q_j) \,=\, \sigma(F_Q\cap Q_j') + 
\sum_{Q_j''\in \F_j''}\sigma(F_Q\cap Q_j'')\,.$$
By \eqref{eq3.80}, it follows that
\begin{multline*}\big(1-4\vartheta\beta\rho^{-1}\big)\theta \sigma(Q_j')+ \big(1-4\vartheta\beta\rho^{-1}\big)\theta \!\!
\sum_{Q_j''\in\F_j''}\sigma(Q_j'')=\big(1-4\vartheta\beta\rho^{-1}\big)\theta \sigma(Q_j)\\[4pt] \leq \, (1-\vartheta)\theta \sigma(Q_j') + 
\sum_{Q_j''\in\F_j''}\sigma(F_Q\cap Q_j'')\,.
\end{multline*}
In turn, using \eqref{eq3.81}, we obtain
$$\big(1-4\beta\rho^{-1}\big)\vartheta\theta\sigma(Q_j') + \big(1-4\vartheta\beta\rho^{-1}\big)\theta \!\!
\sum_{Q_j''\in\F_j''}\sigma(Q_j'') \,\leq\, \sum_{Q_j''\in\F_j''}\sigma(F_Q\cap Q_j'')\,.$$
By the dyadic doubling estimate \eqref{eq:dd-est}, this leads to
$$
\left[\big(1-4\beta\rho^{-1}\big)\vartheta M_1^{-1} + \big(1-4\vartheta\beta\rho^{-1}\big) \right]\theta
\sum_{Q_j''\in\F_j''}\sigma(Q_j'') \,\leq\, \sum_{Q_j''\in\F_j''}\sigma(F_Q\cap Q_j'')\,.
$$
Choosing $\beta \leq \rho/ (4 (M_1+1))$, we find that the
expression in square brackets is at least 1, and therefore, by pigeon holing,
we can pick $Q_j''\in\F_j''$ satisfying
\begin{equation}\label{eq3.82}
\sigma(F_Q\cap Q_j'')\geq \theta \sigma(Q_j'')\,.
\end{equation}  

Hence, using Lemma \ref{lemma:VQ}, we see that
the induction hypothesis 
$H[M_0,\theta]$ holds for $Q''_j\in \F_j''$, and once again we obtain the desired family of chord-arc domains.

Recall that we have constructed our packing measure $\mut$ in such a way that
each $Q_j\in \F$, as well as all of its children, along with the cubes in $\dd_\F\cap\dd_Q$,
belong to the same stopping time regime $\sbf$; see Claim \ref{claim3.16}.  This means in particular
that for each such $Q_j$, the Whitney region $U_{Q_j}$ has exactly two components $U_{Q_j}^\pm\subset
\Omega_\sbf^\pm$, and the analogous statement is true for each child of $Q_j$.   This fact has the following 
consequences:

\begin{remark}\label{remark3.66}
For each $Q_j \in \F_{bad}^{(1)}$, and for the selected child $Q_j''$ of each $Q_j\in \F_{good}^{(1)}$,
the conclusion of the induction hypothesis produces at most two 
chord-arc domains $\Omega^\pm_{Q_j} \supset U_{Q_j}^\pm$ (resp. $\Omega_{Q_j''}^\pm
\supset U_{Q_j''}^\pm$), which we enumerate as $\Omega^i_{Q_j}$ (resp. $\Omega_{Q_j''}^i)$,
$i=1,2$, with $i=1$ corresponding 
 ``+", and  $i=2$ corresponding to ``-", respectively.   
\end{remark}
 
\begin{remark}\label{remark3.67}
For each $Q_j\in \F_{good}^{(1)}$, the connected component $U_{Q_j}^\pm$ overlaps with
the corresponding component $U^\pm_{Q_j''}$ for its child,  so we may augment
$\Omega_{Q_j''}^i$ by adjoining to it the appropriate component $U_{Q_j}^\pm$, to form
a chord arc domain $$\Omega^i_{Q_j} := \Omega_{Q_j''}^i \cup U_{Q_j}^i\,.$$  
\end{remark}


By the induction hypothesis, for each $Q_j \in \F_{bad}^{(1)} \cup \F_{good}^{(1)}$
(and by ADR, in the case of $\F_{good}^{(1)}$),
the chord-arc domains $\Omega^i_{Q_j}$ that we have constructed satisfy
$$\sum_i \sigma (Q_j\cap \pom_{Q_j}^i) \gtrsim \sigma(Q_j)\,,$$
where the sum has either one or two terms,  and where 
the implicit constant depends either on $M_0$ and $\theta$, or on $a$ and $(1-\vartheta)\theta$,
depending on which part of the induction hypothesis we have used.
In particular, for each such $Q_j$, there is at least one choice of index $i$  such that
$\Omega^i_{Q_j}=:\Omega_{Q_j}$ satisfies
\begin{equation}\label{eq3.69a}
\sigma (Q_j\cap \pom_{Q_j}) \gtrsim \sigma(Q_j)
\end{equation}
(if the latter is true for both choices $i=1,2$, we arbitrarily choose $i=1$, 
which we recall corresponds to ``+").
Combining the latter bound with Claim \ref{claim3.70}, and recalling that $\eps$ has now 
been fixed depending only on allowable parameters,
we see that
\begin{equation*}
\sum_{Q_j\, \in \, \F_{bad}^{(1)} \,\cup\, \F_{good}^{(1)}}\sigma (Q_j\cap \pom_{Q_j}) \gtrsim \sigma(Q)
\end{equation*}
 For $Q_j \in  \F_{bad}^{(1)} \,\cup\, \F_{good}^{(1)}$, as above set 
$B^*_{Q_j}:= B(x_{Q_j}, K \ell(Q_j))$. 
 By a covering lemma argument, 
we may extract a subfamily 
$\F^*\subset \F_{bad}^{(1)} \,\cup\, \F_{good}^{(1)}$ 
such that $ \{\kappa B^*_{Q_j}\}_{Q_j\in \F^*}$ is 
pairwise disjoint, where again $\kappa\gg K^4$ is a large dilation factor, and such that 
\begin{equation}\label{eq3.69*}
\sum_{Q_j\, \in \, \F^*}\sigma (Q_j\cap \pom_{Q_j}) \gtrsim_\kappa \sigma(Q)
\end{equation}

Let us now build (at most two) chord-arc domains $\Omega^i_Q$ satisfying the desired properties.


Recall that for each $Q_j \in \F^*$, we defined the corresponding chord-arc
domain $\Omega_{Q_j}:= \Omega_{Q_j}^i$, where the choice of index $i$ (if there was a choice),
was made so that \eqref{eq3.69a} holds.  We then assign each $Q_j\in \F^*$ either to
$\F^*_+$ or to $\F^*_-$, depending on whether we chose $\Omega_{Q_j}$ satisfying
\eqref{eq3.69a} to be $\Omega_{Q_j}^1=\Omega_{Q_j}^+$, or $\Omega_{Q_j}^2
=\Omega_{Q_j}^-$.  We note that at least one of the sub-collections $\F^*_\pm$ 
is non-empty, since
for each $j$, there was at least one choice of index $i$ such that
\eqref{eq3.69a} holds with $\Omega_{Q_j}:= \Omega_{Q_j}^i$.  Moreover, the 
two collections are disjoint, since we have arbitrarily designated $\Omega_{Q_j} = \Omega_{Q_j}^1$
(corresponding to ``+") in the case that there were two choices for a particular $Q_j$.

We further note that if $Q_j \in \F^*_\pm$, then $\Omega_{Q_j} = \Omega_{Q_j}^\pm\supset
U_{Q_j}^\pm$. 

We are now in position to apply Lemma \ref{lemmaCAD}.  Set
\[
	\sbf^*=\left\{Q'\in \dd_Q: Q_j\subset Q' \text{ for some }Q_j\in\F^*\right\}\,,
	\] 
which is a semi-coherent subregime of $\sbf$, with maximal cube $Q$.	
Without loss of generality, we may suppose that $\F^*_+$ is non-empty, and we then define
$$  \Omega_Q^+:= \Omega_{\sbf^*}^+ \bigcup
 \left(\bigcup_{Q_j \,\in\, \F^*_+}\Omega_{Q_j}\right)\,,$$
 and similarly with ``+" replaced by ``-",  provided that $\F^*_-$ is also non-empty.
 Observe that by the induction hypothesis, 
 and our construction (see Remarks \ref{remark3.66}-\ref{remark3.67} and Lemma \ref{lemma2.7}),
 for an appropriate choice of $\pm$,
 $U_{Q_j}^\pm\subset \Omega_{Q_j} \subset B_{Q_j}^*$, and since $\ell(Q_j) \leq 2^{-k_1} \ell(Q)$,
by \eqref{eq3.69*} and Lemma  \ref{lemmaCAD}, 
each (non-empty) choice defines a chord-arc domain with the requisite properties.
 \end{proof}

\appendix

\section{Some counter-examples} \label{appa}

We shall discuss some counter-examples which show that
our background hypotheses  in Theorem \ref{tmain} (namely,
ADR and interior Corkscrew condition)
are in some sense in the nature of best possible.  In the first two examples, 
$\Omega$ is a domain satisfying an interior Corkscrew condition, such that $\pom$
satisfies exactly one
(but not both) of the upper or the lower ADR bounds, and for which harmonic measure $\hm$ 
fails to be weak-$A_\infty$ with respect to surface measure $\sigma$ on $\pom$.
In this setting, in which full ADR
fails, there is no established notion of uniform rectifiability, but in each case, the domain
will enjoy some substitute property which would imply uniform rectifiability of the boundary in the 
presence of full ADR.  

In the last example, we construct an open set $\Omega$ with ADR boundary, and for which
$\hm\in$ weak-$A_\infty$ with respect to surface measure, but for which the interior
Corkscrew condition fails, and $\pom$ is not uniformly rectifiable.

\smallskip

\noindent{\it Failure of the upper ADR bound.}
In \cite{AMT1}, the authors construct an example of a Reifenberg flat domain $\Omega \subset \ree$ 
for which surface measure $\sigma= H^n\lfloor_{\pom}$  is locally finite on $\pom$, but for 
which the
upper ADR bound
\begin{equation}\label{a1}
\sigma(\Delta(x,r) \leq C r^n
\end{equation}
fails, and for which harmonic measure $\hm$ is not absolutely continuous with respect to $\sigma$.  Note that
the hypothesis of Reifenberg flatness implies in particular that $\Omega$ and 
$\Omega_{ext}:= \ree\setminus \overline{\Omega}$ are both NTA domains, hence both enjoy the
Corkscrew condition, so by the relative isoperimetric inequality, the lower ADR bound
\begin{equation}\label{a2}
\sigma(\Delta(x,r) \geq c r^n
\end{equation} holds.  
Thus, it is the failure of \eqref{a1} which causes the failure of absolute continuity: in the presence of
\eqref{a1}, the results of \cite{DJe} apply, and one has that $\hm \in A_\infty(\sigma)$, and that $\pom$ satisfies
a ``big pieces of Lipschitz graphs" condition (see \cite{DJe} for a precise statement), and hence is
uniformly rectifiable.  We note that by a result of Badger \cite{Ba}, a version of the Lipschitz approximation
result of \cite{DJe} still holds for NTA domains with locally finite surface measure, even in the absence of the upper
ADR condition.

\smallskip

\noindent{\it Failure of the lower ADR bound.} In \cite[Example 5.5]{ABoHM}, the authors give an 
example of a domain satisfying
the interior Corkscrew condition, whose boundary is rectifiable  (indeed, 
it is contained in a countable
union of hyperplanes), and 
satisfies the upper ADR condition \eqref{a1}, but 
not the lower ADR condition \eqref{a2},  but for which
surface measure $\sigma$ fails to be absolutely continuous with respect to harmonic measure, and in fact,
for which the non-degeneracy condition
\begin{equation}\label{a3}
A\subset \Delta_X:= B(X,10\delta(X)) \cap\pom,\quad \sigma(A)\geq (1-\eta) \sigma(\Delta_X)  \, \, \implies \,\,  \hm^X(A) \geq c\,,
\end{equation}
fails to hold uniformly for $X\in\Omega$, for any fixed positive $\eta$ and $c$, and therefore $\hm$ cannot be weak-$A_\infty$ with respect to
$\sigma$.  We note that in the presence of the full ADR condition,
if $\pom$ were contained in a countable union of hyperplanes (as it is in the example), 
then in particular it would satisfy
the ``BAUP" condition of \cite{DS2}, and thus would be uniformly rectifiable \cite[Theorem I.2.18, p. 36]{DS2}. 

\smallskip

\noindent{\it Failure of the interior Corkscrew condition}. The example is based on the construction of Garnett's 4-corners
Cantor set $\mathcal{C}\subset\re^2$
(see, e.g., \cite[Chapter 1]{DS2}).   Let $I_0$ be a unit square
positioned with lower left corner
at the origin in the plane, and in general for each $k = 0, 1,2,...$, we let $I_k$ be the unit square positioned with lower left corner at the point $(2k,0)$ on the $x$-axis.
Set $\Omega_0:= I_0$.
Let $\Omega_1$ be the first stage of the 4-corners construction, i.e., a union of four squares
of side length 1/4, positioned in the corners of the unit square $I_1$, and similarly, for each $k$, let
$\Omega_k$ be the $k$-th stage of the 4-corners construction, positioned inside $I_k$.  Note that
$\dist(\Omega_k, \Omega_{k+1}) = 1$ for every $k$.  Set $\Omega := \cup_k \Omega_k$. It is easy to check that 
$\pom$ is ADR, and that the non-degeneracy condition
\eqref{a3} holds in $\Omega$ for some uniform positive $\eta$ and $c$, and thus by the criterion of \cite{BL},
$\hm\in$ weak-$A_\infty(\sigma)$.  On the other hand, the interior Corkscrew condition clearly fails to hold
in $\Omega$ (it holds only for decreasingly small scales as $k$ increases), and certainly $\pom$ cannot
be uniformly rectifiable:  indeed, if it were, then $\pom_k$ would be UR, with uniform constants, for each $k$,
and this would imply that $\mathcal{C}$ itself was UR, whereas in fact, as is well known, it is totally non-rectifiable. One can produce a similar set in 3 dimensions by simply taking the cylinder $\Omega'=\Omega\times[0,1]$. Details are left to the interested reader.

\end{document}